\newcommand{\R}{{\mathbb R}}
\newcommand{\N}{{\mathbb N}}
\newcommand{\e }{\varepsilon}
\newcommand{\Em}{\beta_m}
\newcommand{\Emo}{\overline\beta_m}
\newcommand{\primo}{\gamma}
\newcommand{\secondo}{\widetilde \gamma}
\newcommand{\primop}{\phi}
\newcommand{\secondop}{\widetilde\phi}
\newcommand{\F}{\mathcal{F}}
\newcommand{\GG}{\mathcal{G}}
\newcommand{\D}{\mathcal{D}}
\newcommand{\eps}{\varepsilon}
\newcommand{\smx}{\sin(mx)}
\newcommand{\serie}{\sum_{m=1}^\infty}
\newcommand{\ipi}{\int_0^\pi}
\newcommand{\il}{\int_{-\ell}^\ell}
\newcommand{\hs}{H^2_*(\Omega)}
\newcommand{\ho}{H^2_\mathcal{O}(\Omega)}
\newcommand{\he}{H^2_\mathcal{E}(\Omega)}
\newcommand{\hh}{{H_*^{-2}}}
\newcommand{\hho}{{H}^{-2}_\mathcal{O}(\Omega)}
\newcommand{\hhe}{{H}^{-2}_\mathcal{E}(\Omega)}
\newcommand{\Po}{\mathcal{P}_\mathcal{O}}
\newcommand{\Pe}{\mathcal{P}_\mathcal{E}}
\renewcommand{\ge }{\geqslant}
\renewcommand{\geq }{\geqslant}
\renewcommand{\le }{\leqslant}
\renewcommand{\leq }{\leqslant}
\def\neweq#1{\begin{equation}\label{#1}}
\def\endeq{\end{equation}}
\def\eq#1{(\ref{#1})}
\newtheorem{theorem}{Theorem}[section]
 \newtheorem{lemma}{Lemma}[section]
 \newtheorem{proposition}{Proposition}[section]
\newtheorem{definition}{Definition}[section]
\newtheorem{conjecture}{Conjecture}[section]
\title[A Minimaxmax Problem]{A Minimaxmax Problem for Improving the Torsional Stability of Rectangular Plates}
\author{Elvise Berchio}
\thanks{Elvise Berchio (corresponding author),
             Politecnico di Torino, 
             Torino, Italy.
             elvise.berchio@polito.it}
\author{Davide Buoso}
\thanks{Davide Buoso,
            Universidade de Lisboa,
           Lisboa, Portugal.
           dbuoso@fc.ul.pt.
           }
\author{Filippo Gazzola}
\thanks{Filippo Gazzola, 
           Politecnico di Milano 
           Milano, Italy.
           filippo.gazzola@polimi.it.
           }
\author{Davide Zucco}
\thanks{Davide Zucco, 
           Universit\`a di Torino,
           Torino, Italy.
           davide.zucco@unito.it.
}
\keywords{Shape optimization; worst-case optimization; torsional instability; plates; bridges.}
\subjclass[2010]{35J40; 35P15; 74K20.}
\begin{document}

\begin{abstract}
We use a {\em gap function} in order to compare the torsional performances of different reinforced plates under the action of external forces.
Then, we address a shape optimization problem, whose target is to minimize the torsional displacements of the plate: this leads us to set up
a {\em minimaxmax} problem, which includes a new kind of worst-case optimization. Two kinds of reinforcements are considered: one aims at strengthening
the plate, the other aims at weakening the action of the external forces. For both of them, we study the existence of optima within suitable classes
of external forces and reinforcements. Our results are complemented with numerical experiments and with a number of open problems and conjectures.
\end{abstract}

\maketitle

\section{Introduction}

When pedestrians cross a footbridge or the wind hits a suspension bridge, the deck undergoes oscillations, which can be of three different kinds.
The longitudinal oscillations, in the direction of the bridge, are usually harmless because bridges are planned to withstand them. The lateral oscillations, which move the deck horizontally away from its axis, may become dangerous, if the pedestrians walk synchronously; see the recent events at the London Millennium Bridge
\cite{strogatz,macdonald,sanderson} and also earlier dramatic historical events \cite[\S 1.2]{book}. The torsional oscillations, which appear when the deck rotates around its main axis, are the most dangerous ones; they also appear in heavier structures such as suspension bridges; see
\cite[\S 1.3,1.4]{book} for a survey.\par
Following \cite{fergaz}, we view a bridge as a long narrow rectangular thin plate hinged at two opposite edges and free on the remaining
two edges: this plate well describes decks of footbridges and suspension bridges, which, at the short edges, are supported by the ground.
The corresponding Euler-Lagrange equation is given by a fourth order equation in a planar domain complemented by suitable boundary conditions; see \eqref{loadpb}. The solution of this equation represents the
vertical displacement of the plate under the action of an external force. Note that the solution is continuous because for planar domains the energy space embeds into continuous functions. This does not occur in higher space dimensions or for lower order
problems. The continuity of the solution is a crucial feature since it enables us to use the so-called \emph{gap function}
introduced in \cite{bebuga2}. The gap function measures the difference of the vertical displacements on the two free edges of the plate and
is therefore a measure of its torsional response. The number measuring the \emph{maximal gap} is given by the maximum over the free edges of the gap function; see \eqref{funzionale}. Clearly, the maximal gap depends on the force through the Euler-Lagrange equation satisfied by the solution and one is led to seek the force which yields the largest torsional displacement. This gives a measure of the risk that the bridge collapses.
In order to lower this risk, one may try different ways of reinforcing the deck.\par
Imagine that one has a certain amount of stiff material (e.g., steel) and has to decide where to place it within the plate in order to lower the maximal gap and, in turn, the torsional displacements. This material should occupy a proper open subset of the plate. In literature this kind of problem has been tackled in several ways; we refer to \cite{chaken,chaken2,ksw} for related problems on the torsion of a bar. Since this \emph{shape optimization problem} is completely new, we choose
two different strategies: we first assume that the stiff material reinforces a part of the plate, then we assume that it acts directly on the force
and weakens it by a factor involving the characteristic function of the region occupied by the material and a constant measuring the strength of the stiff material. Reinforcing the plate means that we add the stiff material in critical parts of the plate in order to increase the energy necessary to bend it. Weakening the force means that we place some ``aerodynamic damper'' in order to reduce the action of the external force.
These kinds of minimization problems naturally lead to homogenization \cite{murat}, see also \cite{ksw} for a stiffening problem for the torsion of a bar. Homogenization would lead to optimal designs with reinforcements scattered throughout the structure, namely designs impossible to implement for engineers. And since the design of the stiff structure should be usable for engineers, homogenization must be
avoided and the class of admissible geometries for the reinforcements should be sufficiently small. In this respect, we mention the paper by Nazarov-Sweers-Slutskij \cite{nazarov}, where only ``macro'' reinforcements are considered, although in a fairly different setting. The structural optimization problem that we tackle may be seen as the ``dual problem'' of the one considered in the seminal work by Michell \cite{michell}, see also updated results in \cite[Chapter 4]{bendsoe}: our purpose is to determine the best performance of the stiff material by maintaining the cost whereas Michell aimed to determine the cheapest stiff material by maintaining the performance.\par
For both the two mentioned ways of introducing the reinforcement, our purpose is to optimize the maximal gap. We will introduce suitable classes, for both the force and the reinforcement, in which to set up the optimization problem. First we seek the ``worst'' forces for a given reinforcement.
This number yields the maximal gap that may occur. Then, we seek the ``best'' reinforcements, which minimize the effect of the forces.
We are then led to solve a {\em minimaxmax problem}. The existence of a maximal force and of a minimal reinforcement depends on how wide the classes are. In this paper, for the forces we mainly deal with the classes of Lebesgue functions or of the dual of the energy space, while, concerning the reinforcements, we restrict our attention to simple designs, that may be appropriate for engineering applications: cross-type reinforcements, tiles of rectangular shapes, networks of
bounded length, and general Lipschitz domains, see Definition \ref{spazi}.\par
This minimaxmax problem can also be seen as a \emph{worst-case optimization problem}, since one is interested in minimizing the worst value
of a functional among all possible designs.
An extended presentation of worst-case optimization problems in structural mechanics can be found in \cite{alldap}; see also \cite{cher} for a worst-case optimization problem of a compliance functional in the Lebesgue space.
\par
This paper is organized as follows. In Section \ref{2}, we introduce rigorously the gap function with the minimaxmax problem. In Section~\ref{sec3}, we identify suitable classes for which the minimaxmax problem admits a solution (i.e., worst forces with best reinforcements). In Section~\ref{sec4}, we discuss symmetry properties of worst forces in the case of symmetric reinforcements. In Section~\ref{sec5}, we investigate the worst force acting on a plate with no reinforcement. In Section \ref{easy}, we analyze the effects of cross-type reinforcements, while, in Section~\ref{moregen}, we consider more general polygonal-type reinforcements. In both cases we solve numerically the minimaxmax problem. Sections ~\ref{sec8} to \ref{proofdeltas} are dedicated to the proofs of our results. Finally, Section \ref{open} contains the conclusions on the work done.

\section{Variational Setting and Reinforcements for the Plate}\label{2}

Up to scaling, in the following we may assume that the plate $\Omega$ has length $\pi$ and width $2\ell$ with $2\ell\ll\pi$ so that $\Omega=]0,\pi[\times]-\ell,\ell[\subset\R^2$. According to the Kirchhoff-Love theory \cite{Kirchhoff,Love} (see also \cite{mansfield} for a modern presentation), the energy $\mathbb E$ of a
vertical deformation $u$ of the plate $\Omega$ subject to a load $f$ may be computed through the functional
\neweq{EKL}
\mathbb E(u):=\int_\Omega \left(\frac{(\Delta u)^2}{2}+(1-\sigma)(u_{xy}^2-u_{xx}u_{yy})-fu\right)\, dxdy \, ,
\endeq
where $\sigma$ is the Poisson ratio and satisfies $0<\sigma<1$. This implies that the quadratic part of the energy $\mathbb E$ is positive. For the partially hinged plate under consideration,
the functional $\mathbb E$ should be minimized in the space
$$H^2_*(\Omega):=\Big\{v\in H^2(\Omega):\, v=0\mbox{ on }\{0,\pi\}\times ]-\ell,\ell[\Big\}\, ;$$
since $\Omega$ is a planar domain, one has the embedding $H^2(\Omega)\subset C^0(\overline{\Omega})$, and the condition on $\{0,\pi\}\times ]-\ell,\ell[$
is satisfied pointwise. By \cite[Lemma 4.1]{fergaz} we know that $H^2_*(\Omega)$ is a Hilbert space when endowed with the scalar product
$$
(u,v)_{H^2_*}:=\int_\Omega \left[\Delta u\Delta v+(1-\sigma)(2u_{xy}v_{xy}-u_{xx}v_{yy}-u_{yy}v_{xx})\right]\, dxdy\,
$$
and associated norm $\|u\|_{H^2_*}^2=(u,u)_{H^2_*}$, which is equivalent to the usual norm in $H^2(\Omega)$, that is,
$\|u\|_{H^2}^2=\|u\|_{L^2}^2+\|D^2 u\|_{L^2}^2$.
We also define $\hh(\Omega)$ as the dual space of $H^2_*(\Omega)$
and we denote by $\langle\cdot,\cdot\rangle$ the corresponding duality.
If $f\in L^1(\Omega)$ then the functional $\mathbb E$ is well-defined in $H^2_*(\Omega)$, while if $f\in\hh(\Omega)$ we need to replace
$\int_\Omega fu$ with $\langle f,u\rangle$.\par
Assume that the plate $\Omega$ is reinforced with a
stiff material which occupies an open region $D\subset\Omega$ and that $D$ belongs to a certain class $\D$, while $f$ belongs to some space $\F$ of admissible forcing terms. We consider two possible ways of reinforcing the plate: either we stiffen the plate by increasing the cost of the bending energy, or we add an aerodynamic damper by weakening the force. This modifies the original energy \eqref{EKL} into the two following ways:
\neweq{energy-gsD}
\mathbb{E}_1(u):=\int_\Omega \left[(1+d\chi_D)\left(\frac{(\Delta u)^2}{2}+(1-\sigma)(u_{xy}^2-u_{xx}u_{yy})\right)-fu\right]\, dxdy
\endeq
and
\neweq{energy-f}
\mathbb{E}_2(u):=\int_\Omega \left[\frac{(\Delta u)^2}{2}+(1-\sigma)(u_{xy}^2-u_{xx}u_{yy})-\frac{f\, u}{1+d\chi_D}\right]\, dxdy\, ,
\endeq
where $\chi_D$ is the characteristic function of $D$ and $d>0$ is the strength of the stiffening material.
As for \eqref{EKL}, the quadratic part of the functionals \eqref{energy-gsD} and \eqref{energy-f} are positive and should be minimized on the space $H^2_*(\Omega)$.

When dealing with $\mathbb{E}_1$, for any $D \subset \Omega$ open, we introduce the bilinear form
\neweq{scalarpD}
(u,v)_D:=\int_D\left[\Delta u\Delta v+(1-\sigma)(2u_{xy}v_{xy}-u_{xx}v_{yy}-u_{yy}v_{xx})\right]\, dxdy
\endeq
so that $(u,v)_{\Omega}=(u,v)_{H^2_*}$.
Then, for all $f\in H^{-2}_*(\Omega)$ the minimizer of $\mathbb{E}_1$ satisfies the weak Euler-Lagrange equation
\neweq{weakD}
(u_{f,D},v)_{H^2_*}+d(u_{f,D},v)_D=\langle f,v\rangle\qquad\forall v\in H^2_*(\Omega)\,,
\endeq
which has no strong counterpart due to the lack of regularity of the term $(1+d\chi_D)$ that prevents an integration by parts.

On the other hand, due to the lack of regularity of the term $(1+d\chi_D)$, $\mathbb{E}_2$ is not defined for all $f\in H^{-2}_*(\Omega)$, but it is well-defined for any $f\in L^1(\Omega)$; in this case the minimizer satisfies the equation
$(u_{f,D},v)_{H^2_*}=\int_{\Omega} \tfrac{fv}{1+d\chi_D}\,dxdy$, for all $ v\in H^2_*(\Omega)$,
which may also be written in its strong form:
\neweq{loadpb}
\begin{cases}
(1+d\chi_D)\Delta^2 u=f\,, &  \text{in } \Omega\,, \\
u=u_{xx}=0\,, & \text{on } \{0,\pi\}\times ]-\ell,\ell[\,, \\
u_{yy}+\sigma u_{xx}=u_{yyy}+(2-\sigma)u_{xxy}=0\,, & \text{on } ]0,\pi[\times \{-\ell,\ell\}\,.
\end{cases}
\endeq
Since $0<\sigma<1$, both $\mathbb{E}_1$ and $\mathbb{E}_2$ admit a unique critical point
in $ H^2_*(\Omega)$, their absolute minimum. The minimizer may be different for $\mathbb{E}_1$ and $\mathbb{E}_2$ but we will denote both of them by $u_{f,D}$ since it will always be clear which functional we are dealing with. As we have just seen, the solution
$u_{f,D}$ satisfies a weak Euler-Lagrange equation for $\mathbb{E}_1$ (but not a strong one) while it satisfies a strong Euler-Lagrange equation for $\mathbb{E}_2$ (and not a merely weak one).\par
Assume that some classes $\F$ and $\D$ of admissible $f$ and $D$ are given. Take $f\in\F$, $D\in\D$, and the minimizer $u_{f,D}\in H^2_*(\Omega)\subset C^0(\overline{\Omega})$ of $\mathbb{E}_1$ or $\mathbb{E}_2$, then compute its \emph{gap function} with its \emph{maximal gap}:
\neweq{funzionale}
\GG_{f,D}(x):=u_{f,D}(x,\ell)-u_{f,D}(x,-\ell)\, ,\qquad\GG_{f,D}^\infty:= \max_{x\in[0,\pi]}\, |\GG_{f,D}(x)|\, .
\endeq
In this way we have defined the map $\GG^\infty_{f,D}\colon \F\times\D\to[0,\infty[$ with $(f,D)\mapsto \GG^\infty_{f,D}\,.$
Given $D\in\mathcal D$, we first seek the worst $f\in\F$ such that
\neweq{GGD}
\GG_D^\infty:= \max_{f\in\F}\, \GG_{f,D}^\infty = \max_{f\in\F}\, \max_{x\in[0,\pi]}\, |\GG_{f,D}(x)|\, ,
\endeq
and then the best $D\in\D$ such that
\neweq{minimaxmax2}
\GG^\infty := \min_{D\in\D}\, \GG_D^\infty\,
=\, \min_{D\in\D}\, \max_{f\in\F}\, \max_{x\in[0,\pi]}\, |\GG_{f,D}(x)|\, .
\endeq
This is our \emph{minimaxmax problem}. In the next sections we analyze some classes $\F$ and $\D$, where \eq{GGD} and \eqref{minimaxmax2} admit a solution.
Note that $\GG^\infty=\GG^\infty(\F,\D)$ is monotone with respect to both the classes $\F$ and $\D$ but with opposite monotonicity.

\section{Existence Results for the Minimaxmax Problem}\label{sec3}

We determine some classes $\F$ and $\D$ of admissible forces and reinforcements for which \eq{GGD} and \eqref{minimaxmax2} admit a solution.
The proofs are given in Section \ref{sec8}. We first show that $\GG^\infty_D$, as in \eqref{GGD}, is well-defined for some choices of the class $\F$.

\begin{theorem}\label{optimal}
For a given open set $D\subset\Omega$ and $p\in ]1, +\infty]$, the maximization problems
\begin{equation}\label{functional}
\max \big \{  \GG^\infty_{f,D}:\, \text{$f\in \hh(\Omega)$ with $\|f\|_{\hh}=1$}\big\}\,,\quad (\text{for $\mathbb{E}_1$}),
\end{equation}
\neweq{maxim}
\max \big \{  \GG^\infty_{f,D}:\, \text{$f\in L^p(\Omega)$ with $\|f\|_{L^p}=1$}\big\}\,,\quad (\text{for both $\mathbb{E}_1$ and $\mathbb{E}_2$}),
\endeq
admit a solution (in the considered space).
\end{theorem}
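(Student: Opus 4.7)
The plan is to apply the direct method of the calculus of variations. In each of the three cases---problem \eqref{functional} in $\hh(\Omega)$ for $\mathbb{E}_1$, and problem \eqref{maxim} in $L^p(\Omega)$ for both $\mathbb{E}_1$ and $\mathbb{E}_2$---the unit ball of the force space is compact in a suitable weak-type topology: in $\hh(\Omega)$ (a Hilbert space, hence reflexive) and in $L^p(\Omega)$ for $1<p<\infty$ one uses weak compactness, while in $L^\infty(\Omega)=(L^1)^*$ one uses weak-$*$ compactness. Since $f\mapsto \GG_{f,D}^\infty$ is positively homogeneous, I would relax the constraint $\|f\|=1$ to $\|f\|\leq 1$, extract a maximizing sequence $f_n$ converging to some $f$ with $\|f\|\leq 1$, prove that $\GG^\infty_{f_n,D}\to\GG^\infty_{f,D}$, and observe that the limit must have unit norm unless the supremum is trivially zero.

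The key reduction is to show that the solution map $f\mapsto u_{f,D}$ intertwines the chosen weak topology on forces with the weak topology on $H^2_*(\Omega)$. For $\mathbb{E}_1$, the bilinear form $(u,v)_{H^2_*}+d(u,v)_D$ defines an inner product on $H^2_*(\Omega)$ equivalent to $(\cdot,\cdot)_{H^2_*}$, and the weak Euler-Lagrange equation \eqref{weakD} identifies $u_{f,D}$ as the Riesz representative of $f\in\hh(\Omega)$ with respect to this inner product. Hence $f\mapsto u_{f,D}$ is an isometric isomorphism $\hh(\Omega)\to H^2_*(\Omega)$, so $f_n\rightharpoonup f$ in $\hh(\Omega)$ is equivalent to $u_{f_n,D}\rightharpoonup u_{f,D}$ in $H^2_*(\Omega)$. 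When $f_n\rightharpoonup f$ (or $\rightharpoonup^*$) in $L^p(\Omega)$, testing against any $v\in H^2_*(\Omega)\subset C^0(\overline{\Omega})\subset L^{p'}(\Omega)$ gives $\langle f_n,v\rangle\to\langle f,v\rangle$, so $f_n$ converges weakly in $\hh(\Omega)$ as well and the same conclusion follows. For $\mathbb{E}_2$ an analogous argument applies: $u_{f,D}$ is the Riesz representative, in the standard $H^2_*$ inner product, of the functional $v\mapsto \int_\Omega fv/(1+d\chi_D)\,dxdy$, and weak convergence of $f_n$ transfers to $u_{f_n,D}$ because $v/(1+d\chi_D)\in L^{p'}(\Omega)$ whenever $v\in H^2_*(\Omega)$.

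To finish, I would invoke the compact embedding $H^2_*(\Omega)\hookrightarrow C^0(\overline{\Omega})$, which holds by Rellich--Kondrachov since $\Omega\subset\R^2$. This upgrades $u_{f_n,D}\rightharpoonup u_{f,D}$ in $H^2_*(\Omega)$ to uniform convergence on $\overline{\Omega}$; in particular the traces at $y=\pm\ell$ converge uniformly on $[0,\pi]$, so $\GG_{f_n,D}\to\GG_{f,D}$ uniformly and $\GG^\infty_{f_n,D}\to\GG^\infty_{f,D}$. The main technical point is the transfer of weak convergence from the force to the solution, since the coefficient $1+d\chi_D$ is only measurable and prevents integration by parts; the Riesz-representation reformulation circumvents this issue by reducing the Euler-Lagrange equation to a linear Hilbert-space identification, after which the compact Sobolev embedding yields the desired continuity of $\GG^\infty_{\cdot,D}$.
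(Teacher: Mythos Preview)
Your proposal is correct and follows essentially the same approach as the paper: weak(-$*$) compactness of the unit ball in the force space, continuity of $f\mapsto\GG^\infty_{f,D}$ obtained by passing from weak convergence of $u_{f_n,D}$ to uniform convergence via the compact embedding $H^2_*(\Omega)\hookrightarrow C^0(\overline\Omega)$, and positive homogeneity to recover the unit-norm constraint on the maximizer. One minor slip: the map $f\mapsto u_{f,D}$ is a bounded linear isomorphism rather than an isometric one (the norm on $\hh(\Omega)$ is dual to $\|\cdot\|_{H^2_*}$, not to the $D$-dependent inner product $(\cdot,\cdot)_{H^2_*}+d(\cdot,\cdot)_D$), but this does not affect your argument since bounded linear isomorphisms are weak-to-weak continuous.
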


Then, we turn to problem \eqref{minimaxmax2}. We introduce some classes $\D$ for which it is guaranteed the existence of a solution.

\begin{definition}[Classes of admissible reinforcements]\label{spazi}

(a) \emph{Cross-type reinforcements}: for $N,M\in\mathbb N$, $\mu\in ]0,\pi/2N[$, $\varepsilon\in]0,\ell/M[$, $x_i\in [\mu, \pi-\mu]$ for $i=1,\dots, N$ with
$x_{i+1}-x_i>2\mu$ for $i\le N-1$, and $y_j\in[-\ell+\varepsilon,\ell-\varepsilon]$ for $j=1,\dots, M$ with $y_{j+1}-y_j>2\varepsilon$ for $j\le M-1$, define
$$\mathcal C :=\Big\{D\subset\Omega: \, D=\Big(\bigcup_{i=1}^N ]x_i-\mu,x_i+\mu[\times ]-\ell,\ell[\Big)\cup\Big(\bigcup_{j=1}^M ]0,\pi[\times ]y_j-\varepsilon,y_j+\varepsilon[ \Big)\Big\}\,.$$

(b)  \emph{Tiles of rectangular shapes}: for $N\in\mathbb N$ and $\varepsilon\in ]0,\ell[$, define
$$\mathcal T :=\Big\{D\subset\Omega: \, D=\bigcup_{i=1}^N R^i, \text{$R^i\subset\Omega$ is an open rectangle with inradius  $\geq\varepsilon$
}\Big\}\,.$$

(c) \emph{Networks of bounded length}: for $\varepsilon\in]0, \ell[$ and $L>0$, define
$$\mathcal N :=\big\{D\subset\Omega:\, \text{$D=\Sigma^\varepsilon$ where $\Sigma\subset \overline\Omega$ is closed, connected, $\mathcal H^1(\Sigma)\leq L$}\big\}\,.$$
Here $\mathcal H^1$ denotes the one-dimensional Hausdorff measure of a set, and $\Sigma^\varepsilon$ represents the $\varepsilon$-tubular neighborhood of $\Sigma$, namely the set of points in $\Omega$ at distance to $\Sigma$ less than $\varepsilon$.

(d) \emph{Lipschitz trusses}: for $\varepsilon\in ]0,\ell[$, define
$$\mathcal L :=\big\{D\subset \Omega: \, \text{$D$ open with the inner $\varepsilon$-cone property}\big\}\,.$$
We recall that by the inner $\varepsilon$-cone property we mean that at every point $x$ of the boundary $\partial D$ there is some truncated cone from $x$ with an opening angle $\varepsilon$ and radius $\varepsilon$ inside $D$.
\end{definition}

Notice that some of these classes are \emph{monotone} with respect to set inclusion, namely $\mathcal C\subset \mathcal T\subset \mathcal L$,
for suitable choices of the parameters $N, \varepsilon, \mu, L$.

\begin{theorem}\label{optimaltruss}
For a given $\kappa\in ]0,2\pi\ell[$ the minimization problem
\begin{equation}\label{prob}
\min\{ \GG_D^\infty:\, \text{$D\in\D$  with $|D|=\kappa$}\}\,,
\end{equation}
admits a solution whenever the class $\mathcal D$ is one of those introduced in Definition~\ref{spazi} (with the parameters chosen so as to satisfy the area constraint).
\end{theorem}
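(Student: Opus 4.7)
The plan is to apply the direct method of the calculus of variations uniformly across the four classes. Fix one of the classes $\D\in\{\mathcal{C},\mathcal{T},\mathcal{N},\mathcal{L}\}$ with the corresponding class $\F$ of admissible forces from Theorem~\ref{optimal}, and take a minimizing sequence $\{D_n\}\subset\D$ with $|D_n|=\kappa$ and $\GG_{D_n}^\infty\to\inf$. The strategy is to extract a limit $D^*\in\D$ with $|D^*|=\kappa$, and to establish lower semicontinuity of $D\mapsto\GG_D^\infty$ along the sequence so that $D^*$ is a minimizer.

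First I would prove a class-dependent compactness statement: up to a subsequence, $D_n\to D^*\in\D$ in such a sense that $\chi_{D_n}\to\chi_{D^*}$ strongly in $L^1(\Omega)$ and $|D^*|=\kappa$. For $\mathcal{C}$, the positive lower bounds $\mu,\varepsilon$ together with the area constraint bound the integers $N_n,M_n$; after fixing them by subsequence, the centers $(x_i^{(n)},y_j^{(n)})$ live in a compact set and are extracted by Bolzano--Weierstrass. For $\mathcal{T}$, the inradius bound forces $N_n\leq\kappa/(4\varepsilon^2)$ and each rectangle is parameterized by compact data (center, orientation, side lengths). For $\mathcal{N}$, Blaschke's selection theorem extracts a Hausdorff limit $\Sigma^*$ of the connected generators, Golab's theorem preserves connectedness and gives $\mathcal H^1(\Sigma^*)\leq L$, and Hausdorff convergence lifts to the $\varepsilon$-tubular neighborhoods $D_n=\Sigma_n^\varepsilon$, yielding $\chi_{D_n}\to\chi_{D^*}$ in $L^1$. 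For $\mathcal{L}$, Chenais' theorem on the uniform cone property provides a Hausdorff limit in $\mathcal{L}$ together with $L^1$-convergence of the characteristic functions. In each case the area constraint passes to the limit.

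Next, for any fixed admissible $f$, I would show that $u_{f,D_n}\to u_{f,D^*}$ strongly in $H^2_*(\Omega)$. For $\mathbb{E}_1$, the weak equation \eqref{weakD} gives a uniform bound $\|u_{f,D_n}\|_{H^2_*}\leq C\|f\|_{\hh}$; extract a weak limit $u^*$. To pass to the limit in the bilinear term $d\int_\Omega\chi_{D_n}[\Delta u_{f,D_n}\Delta v+(1-\sigma)(2u^{(n)}_{xy}v_{xy}-u^{(n)}_{xx}v_{yy}-u^{(n)}_{yy}v_{xx})]\,dxdy$ for fixed $v\in H^2_*(\Omega)$, observe that $\chi_{D_n}\Delta v\to\chi_{D^*}\Delta v$ strongly in $L^2$ by dominated convergence (using a.e. convergence along a further subsequence and $\Delta v\in L^2$), while $\Delta u_{f,D_n}\rightharpoonup\Delta u^*$ weakly in $L^2$; the weak-strong pairing passes to the limit, and identically for the mixed second-derivative terms. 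Uniqueness of the weak solution forces $u^*=u_{f,D^*}$, and strong $H^2_*$-convergence follows by testing against $u_{f,D_n}$ itself and exploiting the coercivity of the scalar product, guaranteed by $0<\sigma<1$. For $\mathbb{E}_2$ the argument is analogous, since $(1+d\chi_{D_n})^{-1}\to(1+d\chi_{D^*})^{-1}$ boundedly and a.e., so $f/(1+d\chi_{D_n})\to f/(1+d\chi_{D^*})$ strongly in $L^p$ when $f\in L^p$.

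Finally, the embedding $H^2_*(\Omega)\hookrightarrow C^0(\overline{\Omega})$ turns the above into uniform convergence on $\overline{\Omega}$, hence $\GG_{f,D_n}^\infty\to\GG_{f,D^*}^\infty$ for every fixed $f\in\F$. Consequently $D\mapsto\GG_D^\infty=\sup_{f\in\F}\GG_{f,D}^\infty$ is a pointwise supremum of continuous functions of $D$ and therefore lower semicontinuous; this yields $\GG_{D^*}^\infty\leq\liminf_n\GG_{D_n}^\infty=\inf$, so $D^*$ is a minimizer. The main obstacle I expect is the $L^1$-convergence of $\chi_{D_n}$ in the network class $\mathcal{N}$: once Blaschke and Golab deliver a connected Hausdorff limit of the generators $\Sigma_n$, one still has to verify carefully that Hausdorff convergence of closed sets implies measure convergence of their $\varepsilon$-tubular neighborhoods. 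The other three classes and the PDE-perturbation step are then fairly standard, the only subtlety being the low regularity of the coefficient $1+d\chi_D$, which is precisely what the $L^1$-convergence is designed to handle.
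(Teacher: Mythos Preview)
Your proposal is correct and follows essentially the same route as the paper: the direct method with $L^1$-convergence of characteristic functions, class-by-class compactness (Bolzano--Weierstrass for $\mathcal{C}$ and $\mathcal{T}$, Blaschke--Go\l ab for $\mathcal{N}$, and the uniform cone-property compactness for $\mathcal{L}$), followed by continuity of $D\mapsto \GG_{f,D}^\infty$ for fixed $f$ and lower semicontinuity of the supremum. Two minor remarks: in Definition~\ref{spazi} the integers $N,M$ are \emph{fixed} parameters of the class, so your bounding step for $N_n,M_n$ is unnecessary (though harmless); and the paper packages your PDE-perturbation argument as a separate continuity result (Proposition~\ref{trivial}), using only weak $H^2_*$-convergence plus the compact embedding into $C^0(\overline{\Omega})$, so you need not upgrade to strong $H^2_*$-convergence.
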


\section{Symmetric Framework for the Minimaxmax Problem}\label{sec4}

Whenever the class $\D$ of the minimaxmax problem \eqref{minimaxmax2} reduces to symmetric reinforcements, the class $\F$ can be reduced without changing the problem. We say that a set $D\subset\Omega$ is \emph{symmetric with respect to the midline} (or, for short, symmetric) if
$(x,y)\in D$ if and only if $(x,-y)\in D$, for all $(x,y)\in\Omega$.
Then, we introduce the subspaces of even and odd functions with respect to $y$:
$$\begin{array}{cc}
\he:=\{u\in\hs:\, u(x,-y)=u(x,y)\ \forall(x,y)\in\Omega\}\, ,\\
\ho:=\{u\in\hs:\, u(x,-y)=-u(x,y)\ \forall(x,y)\in\Omega\}\, .
\end{array}$$
We first notice that
\neweq{scomp}
\he\perp\ho\, ,\qquad \hs=\he\oplus\ho\, .
\endeq
For all $u\in\hs$ we denote by $u^e\in\he$ and $u^o\in\ho$ its components according to this decomposition, namely
$u^e(x,y)=\tfrac{u(x,y)+u(x,-y)}{2}$ and $u^o(x,y)=\tfrac{u(x,y)-u(x,-y)}{2}$.
The orthogonal projections $
\Pe\colon\hs\to\he$ and $\Po\colon\hs\to\ho$ are defined onto these subspaces as $\Pe u:=u^e$ and $\Po u:=u^o$, for every $u\in\hs$.
Then, we define:
$$\begin{array}{cc}
\hhe:=\{f\in\hh(\Omega): \langle f,v\rangle=0\ \forall v\in\ho\}\,,\\
\hho:=\{f\in\hh(\Omega): \langle f,v\rangle=0\ \forall v\in\he\}\,.
\end{array}$$
In particular, $\ho\subseteq\ker f$ for every $f\in\hhe$ and $\he\subseteq\ker f$ for every $f\in\hho$.
Moreover, $\hh(\Omega)=\hhe\oplus\hho$, that is for every $f\in\hh(\Omega)$ there exists a unique couple $(f^e,f^o)\in\hhe\times\hho$ such that $f=f^e+f^o$; with $f^e:=f\circ\Pe$ and $f^o:=f\circ\Po$.
As usual, we endow $\hh(\Omega)$ with the norm
$\|f\|_{\hh}\, :=\, \sup_{\|v\|_{H^2_*=1}}\ \langle f,v\rangle\, ,$
and we observe that
\neweq{normf}
\|f\|_{\hh}=\max\big\{\|f^e\|_{\hh},\|f^o\|_{\hh}\big\}\qquad\forall f\in\hh(\Omega)\, .
\endeq

The next result shows that if the reinforcement $D$ is symmetric with respect to the midline then the worst forces $f$, whose existence is ensured by Theorem~\ref{optimal}, can be sought in the class of odd distributions.

\begin{theorem}\label{fsymmetry}
Assume that $D\subset\Omega$ is open and symmetric with respect to the $x$-axis. Then, \eqref{functional} is equivalent to
$\max\{\GG^\infty_{f,D}:\, \text{$f\in\hho$, $\|f\|_{\hh}=1$}\}.$
Moreover, if $f\in\hho$ is such that $\|f\|_{\hh}=1$ and
$$\GG^\infty_{f,D}=\max\Big\{\GG^\infty_{g,D}:\,  \text{$g\in\hh(\Omega)$ with $\|g\|_{\hh}=1$}\Big\}\, ,$$
then there exist infinitely many $g\in\hh(\Omega)$ such that $\Pe g\neq0$, $\|g\|_{\hh}=1$, and $\GG^\infty_{g,D}=\GG^\infty_{f,D}$.
\end{theorem}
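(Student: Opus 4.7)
The strategy is to exploit the orthogonal decomposition \eq{scomp} together with its dual on $\hh(\Omega)$. My first step will be to verify that when $D$ is symmetric with respect to the $x$-axis, both bilinear forms $(\cdot,\cdot)_{H^2_*}$ and $(\cdot,\cdot)_D$ vanish on $\he\times\ho$. This reduces to a parity check: for $u$ even and $v$ odd in $y$, the derivatives $u_{xx}$, $u_{yy}$, $\Delta u$ are even in $y$ while $v_{xx}$, $v_{yy}$, $\Delta v$ are odd, and $u_{xy}$ is odd while $v_{xy}$ is even, so every summand in the integrands of $(u,v)_{H^2_*}$ and of \eqref{scalarpD} is odd in $y$; integration over the $y$-symmetric sets $\Omega$ and $D$ then gives zero.

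The second step is to split the solution of \eqref{weakD} along the decompositions $\hs=\he\oplus\ho$ and $\hh(\Omega)=\hhe\oplus\hho$. By linearity of \eqref{weakD} in $f$, $u_{f,D}=u_{f^e,D}+u_{f^o,D}$. Because $D$ is symmetric, the whole problem is invariant under the reflection $(x,y)\mapsto(x,-y)$; moreover, $f^e$ (respectively, $f^o$) is left invariant (resp., changes sign) when test functions are composed with this reflection. Uniqueness for \eqref{weakD} then forces $u_{f^e,D}\in\he$ and $u_{f^o,D}\in\ho$. In particular $u_{f^e,D}(x,\ell)=u_{f^e,D}(x,-\ell)$, so the even component contributes nothing to the gap function, and $\GG_{f,D}=\GG_{f^o,D}$, whence $\GG^\infty_{f,D}=\GG^\infty_{f^o,D}$.

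Combining this identity with \eqref{normf} and the linearity of \eqref{weakD} yields the equivalence of \eqref{functional} with its restriction to $\hho$. If $\|f\|_{\hh}=1$ and $f^o\neq 0$, then $\|f^o\|_{\hh}\leq 1$, and rescaling to $\tilde f:=f^o/\|f^o\|_{\hh}\in\hho$ (of unit norm) can only increase the gap, since $\GG^\infty_{\tilde f,D}=\GG^\infty_{f^o,D}/\|f^o\|_{\hh}\geq\GG^\infty_{f,D}$; if $f^o=0$ then $\GG^\infty_{f,D}=0$ trivially. The opposite inequality is immediate from $\hho\subset\hh(\Omega)$.

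For the multiplicity claim, I would fix a unit-norm odd maximizer $f\in\hho$ and, for any $h\in\hhe$ with $0<\|h\|_{\hh}\leq 1$, set $g:=f+h$. Then $g^o=f$ and $\Pe g=h\neq 0$; by the previous step $\GG^\infty_{g,D}=\GG^\infty_{g^o,D}=\GG^\infty_{f,D}$, while \eqref{normf} gives $\|g\|_{\hh}=\max\{\|f\|_{\hh},\|h\|_{\hh}\}=1$. Varying $h$ along any one-parameter family of nonzero elements in the closed unit ball of $\hhe$ produces infinitely many such $g$. The only genuine calculation in the whole argument is the parity check in the first step; I expect this to be the main (and essentially only) obstacle, with everything else reducing to linearity and \eqref{normf}.
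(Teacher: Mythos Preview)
Your proposal is correct and follows essentially the same approach as the paper: both decompose $f=f^e+f^o$, observe that only the odd part contributes to the gap function, rescale $f^o$ to unit norm to prove the equivalence, and then add arbitrary even perturbations $h\in\hhe$ with $\|h\|_{\hh}\le1$ to produce the infinitely many non-odd maximizers via \eqref{normf}. The only difference is expository: you spell out the parity check for the orthogonality $(\he,\ho)_D=0$ and the reflection-invariance argument giving $u_{f^e,D}\in\he$, $u_{f^o,D}\in\ho$, whereas the paper compresses these into the single displayed identity \eqref{doppia} and the remark that $D$ symmetric allows this splitting.
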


Theorem \ref{fsymmetry} states that, for a symmetric reinforcement $D$ (possibly $D=\emptyset$ as for the free plate),
the maximization of the gap function can be restricted to the class of odd distributions.
But Theorem \ref{fsymmetry} {\em does not} state that {\em only} odd $f$ attain the maximum. And indeed, $\GG^\infty_{f,D}$ is not sensitive to the
addition of some $\phi\in\hhe$ to $f$, provided that the total norm is not exceeded. An interesting open problem is to determine whether there
exists a unique $f\in\hho$ maximizing $\GG^\infty_{f,D}$ (up to a sign change). We expect the answer to depend on $D$, in particular on possible
additional symmetry properties of $D$. We prove Theorem \ref{fsymmetry} in Section \ref{pp}. \par
Next, we have the following $L^p$-version of Theorem \ref{fsymmetry}.

\begin{theorem}\label{fsymmetry2}
Assume that $D\subset\Omega$ is open and symmetric with respect to the $x$-axis and let $p\in ]1,\infty]$. Then, problem \eqref{maxim} is equivalent to the maximization problem
$\max\Big\{\GG^\infty_{f,D}:\, \text{$f\in L^p(\Omega)$, $f$ odd in $y$, $\|f\|_{L^p}=1$}\Big\}$.

If $1<p<\infty$, then any maximizer is necessarily odd with respect to $y$.\par
If $p=\infty$ and an odd maximizer $f$ satisfies $|f(x,y)|<1$ on a subset of $\Omega$ of positive measure, then there exist infinitely many
maximizers $g\in L^\infty(\Omega)$ such that $g^e\neq0$ and $\|g\|_{L^{\infty}}=1$.
\end{theorem}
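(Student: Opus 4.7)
The plan is to parallel the proof of Theorem~\ref{fsymmetry}, replacing the dual-space splitting with the pointwise even/odd decomposition of $L^p$ functions and invoking the sharp form of Minkowski's inequality. The reduction to odd forces rests on two observations: (i) symmetry of $D$ makes the gap depend only on $f^o$; (ii) Minkowski controls $\|f^o\|_{L^p}$ by $\|f\|_{L^p}$, with equality characterizing oddness when $1<p<\infty$.

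First I would decompose $f=f^e+f^o$ with $f^e(x,y)=\tfrac12[f(x,y)+f(x,-y)]$ and $f^o(x,y)=\tfrac12[f(x,y)-f(x,-y)]$. Since $D$ is symmetric in $y$, the weight $(1+d\chi_D)^{\pm1}$ is even in $y$, and each term in the integrand of \eqref{scalarpD} is odd in $y$ whenever one factor lies in $\he$ and the other in $\ho$; thus the bilinear forms underlying both \eqref{weakD} and \eqref{loadpb} vanish on $\he\times\ho$. Consequently the (linear) solution map splits as $u_{f,D}=u_{f^e,D}+u_{f^o,D}$ with $u_{f^e,D}\in\he$ and $u_{f^o,D}\in\ho$; the even part contributes nothing to the gap since $u_{f^e,D}(x,\ell)=u_{f^e,D}(x,-\ell)$, so $\GG_{f,D}=\GG_{f^o,D}$ and $\GG^\infty_{f,D}=\GG^\infty_{f^o,D}$.

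Next, applied to $f^o=\tfrac12(f-Tf)$ where $T\colon f(x,y)\mapsto f(x,-y)$ is an $L^p$-isometry, Minkowski gives $\|f^o\|_{L^p}\leq\|f\|_{L^p}$. Let $M_o$ be the maximum of $\GG^\infty_{g,D}$ over odd $g\in L^p$ of unit norm. For any $f$ with $\|f\|_{L^p}=1$ and $f^o\not\equiv0$, the function $\tilde f:=f^o/\|f^o\|_{L^p}$ is odd of unit norm and, by linearity together with the first step,
$$\GG^\infty_{f,D}=\GG^\infty_{f^o,D}=\|f^o\|_{L^p}\,\GG^\infty_{\tilde f,D}\leq\|f^o\|_{L^p}\,M_o\leq M_o,$$
so the two maximization problems are equivalent. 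For $1<p<\infty$, the equality case of Minkowski requires $f(x,y)=-\alpha f(x,-y)$ a.e.\ for some $\alpha\geq 0$; iterating the relation yields $\alpha^2=1$, hence $\alpha=1$ and $f$ is odd. Any maximizer must therefore satisfy $\|f^o\|_{L^p}=\|f\|_{L^p}$ and so be odd.

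For the last statement, let $f$ be an odd $L^\infty$ maximizer with $\|f\|_{L^\infty}=1$ and suppose $\{|f|<1\}$ has positive measure. Choose $n$ large enough that $E_n:=\{|f|\leq 1-1/n\}$ has positive measure; by oddness of $f$, $E_n$ is symmetric in $y$. For any nonzero $\phi\in L^\infty(\Omega)$ even in $y$, supported in $E_n$, with $\|\phi\|_{L^\infty}\leq 1/n$, we have $|f+\phi|\leq 1$ a.e.; since $\|f\|_{L^\infty}=1$ while $|f|\leq 1-1/n$ on $E_n$, the essential supremum of $|f|$ on $\Omega\setminus E_n$ equals $1$, hence $\|f+\phi\|_{L^\infty}=1$. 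Then $g:=f+\phi$ satisfies $g^o=f$, $g^e=\phi\neq 0$, and $\GG^\infty_{g,D}=\GG^\infty_{f,D}$ by the first step. An infinite-dimensional family of such $\phi$ (e.g.\ smooth bumps with support in $E_n$) produces infinitely many such $g$. The main obstacle I foresee is correctly invoking the rigidity of Minkowski's inequality to force $\alpha=1$ (noting that both summands have the same $L^p$-norm), and verifying that the splitting $u_{f,D}=u_{f^e,D}+u_{f^o,D}$ holds for both Euler-Lagrange formulations \eqref{weakD} and \eqref{loadpb}.
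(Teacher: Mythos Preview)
Your proof is correct and follows essentially the same route as the paper: both use the even/odd decomposition together with the symmetry of $D$ to show $\GG^\infty_{f,D}=\GG^\infty_{f^o,D}$, invoke Minkowski's inequality $\|f^o\|_{L^p}\le\|f\|_{L^p}$ (the paper isolates this as a separate lemma), appeal to the equality case for $1<p<\infty$ to force oddness, and for $p=\infty$ perturb an odd maximizer by small even functions supported where $|f|<1$. Your iteration argument $\alpha^2=1$ is exactly the paper's ``the only possibility is $\alpha=-1$'', and your quantitative use of $E_n=\{|f|\le 1-1/n\}$ is a slightly more careful version of the paper's construction.
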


Theorem \ref{fsymmetry2} states that, for a symmetric reinforcement $D$, the maximization of the gap function can be restricted to the class of odd functions.
Moreover, differently from Theorem \ref{fsymmetry}, if $1<p<\infty$ it says that {\em only} odd functions $f$ attain the maximum. On the other hand, in the case $p=\infty$ oddness may fail, provided that there exists an odd maximizer
satisfying the somewhat strange property stated in Theorem \ref{fsymmetry2}: the reason of this assumption will become clear in the proof of Theorem \ref{fsymmetry2} given in Section \ref{pp}.

\section{Worst Cases on the Free Plate}\label{sec5}

In this section, we consider the free plate with no reinforcement ($D=\emptyset$) so that $\mathbb{E}_1$ and $\mathbb{E}_2$ coincide, and we study problem \eqref{functional}. For simplicity,
for all $f\in H^{-2}_*(\Omega)$, we set $\GG_{f}(x)= \GG_{f,\emptyset}(x)$ and $ \GG^\infty_{f}= \GG^\infty_{f,\emptyset}$.\par
Following the suggestion of Theorem \ref{fsymmetry}, for any $z\in]0,\pi[$ we focus on the \emph{odd} distribution
\begin{equation}\label{defTi}
T_{z}:=\frac{\delta_{(z,\ell)}-\delta_{(z,-\ell)}}{2}\in \hho\,,
\end{equation}
where $\delta_{P}$ is the Dirac delta with mass concentrated at $P\in \overline\Omega$.
Let $u_z\in H^2_*(\Omega)$ be the unique solution of the equation $(u_z,v)_{H^2_*}=\langle T_{z},v\rangle$, for all $v\in H^2_*(\Omega)$.
By the Riesz Theorem, this means that $u_z$ is the representative of $T_{z}$ and therefore (by taking $v=u_z$)
$\|T_{z}\|_{H^{-2}_*}^2=\|u_z\|_{H^2_*}^2=\langle T_{z},u_z\rangle={\GG_{T_{z}}(z)}/{2}$.
This enables us to normalize $T_{z}$ and introduce the distribution $\overline T_{z}:=\tfrac{\sqrt{2}T_{z}}{\sqrt{{\GG_{T_{z}}(z)}}}$ such that $\|\overline T_{z}\|_{H^{-2}_*}=1$.
For any integer $m$, set
\neweq{Um}
\Upsilon_m:=\frac{\sinh^2(m\ell)}{m^3\left[(3+\sigma)\sinh(m\ell)\cosh(m\ell)+(1-\sigma)m\ell\right]}\, .
\endeq
In Section \ref{proofdeltas} we prove the following result.

\begin{proposition}\label{GTxi}
For all $x,z\in]0,\pi[$ we have
$$\GG_{T_{z}}(x)= \frac{4}{\pi(1-\sigma)}\serie \Upsilon_m\, \sin(mz)\, \sin(m x)\, ,
\qquad \GG_{\overline T_{z}}(x)=\frac{\sqrt{2}\ \GG_{ T_{z}}(x)}{\sqrt{\GG_{T_{z}}(z)}}\, .$$
\end{proposition}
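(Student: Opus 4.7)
The plan is to expand $u_{T_z}$ in a Fourier sine series in $x$, so that the hinged conditions at $x\in\{0,\pi\}$ are automatic and the problem decouples mode by mode. Writing $u_{T_z}(x,y)=\sum_{m\geq 1}u_m(y)\sin(mx)$, oddness of $T_z$ in $y$ together with the decomposition $\hs=\he\oplus\ho$ in \eqref{scomp} forces each $u_m$ to be odd. Moreover, since $T_z$ is supported on $\{y=\pm\ell\}$, the homogeneous biharmonic ODE $u_m^{(4)}-2m^2 u_m''+m^4 u_m=0$ holds on $\,]{-\ell},\ell[\,$, and its odd solutions yield the two-parameter ansatz
\begin{equation*}
u_m(y)=B_m\sinh(my)+C_m\, y\cosh(my).
\end{equation*}

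I would then test the weak equation $(u_{T_z},v)_{H^2_*}=\langle T_z,v\rangle$ against $v(x,y)=\phi(y)\sin(mx)$: the $x$-integration, by orthogonality of sines and cosines on $[0,\pi]$, reduces the left-hand side to a one-dimensional expression in $y$, while the right-hand side equals $\tfrac{\sin(mz)}{\pi}[\phi(\ell)-\phi(-\ell)]$. Integrating by parts in $y$ and exploiting the arbitrariness of $\phi(\pm\ell)$ and $\phi'(\pm\ell)$ produces, at $y=\ell$ (the $y=-\ell$ conditions being automatic by oddness), the two natural boundary conditions
\begin{equation*}
u_m''(\ell)-\sigma m^2 u_m(\ell)=0,\qquad u_m'''(\ell)-(2-\sigma)m^2 u_m'(\ell)=-\frac{\sin(mz)}{\pi},
\end{equation*}
the second one encoding the concentrated shear carried by $T_z$. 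Using the first equation to express $B_m$ in terms of $C_m$ and substituting into the second, the identity $\cosh^2(m\ell)-\sinh^2(m\ell)=1$ collapses the coefficient of $C_m$ to $\tfrac{m^2}{\sinh(m\ell)}[(3+\sigma)\sinh(m\ell)\cosh(m\ell)+(1-\sigma)m\ell]$. Keeping this algebra organised so that $\Upsilon_m$ emerges without stray factors is the main technical hurdle.

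The computation above yields $C_m=-\tfrac{\sin(mz)\sinh(m\ell)}{\pi m^2[(3+\sigma)\sinh(m\ell)\cosh(m\ell)+(1-\sigma)m\ell]}$, and then a further cancellation inside $u_m(\ell)=B_m\sinh(m\ell)+C_m\ell\cosh(m\ell)$, in which the two $C_m\ell\cosh(m\ell)$ contributions kill each other, leaves $u_m(\ell)=-\tfrac{2C_m\sinh(m\ell)}{(1-\sigma)m}=\tfrac{2\sin(mz)}{\pi(1-\sigma)}\Upsilon_m$. Since $u_{T_z}$ is odd in $y$, one has $\GG_{T_z}(x)=2u_{T_z}(x,\ell)=2\sum_m u_m(\ell)\sin(mx)$, which is the claimed series; the asymptotic $\Upsilon_m=O(m^{-3})$ as $m\to\infty$ (from $\sinh(m\ell)\cosh(m\ell)\sim\sinh^2(m\ell)$) guarantees absolute and uniform convergence, so the termwise identification is legitimate and $\GG_{T_z}$ is continuous. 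The formula for $\GG_{\overline T_z}$ is then immediate from the linearity of the map $f\mapsto u_f$, since $\overline T_z$ is a scalar multiple of $T_z$.
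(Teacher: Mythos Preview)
Your argument is correct. The computation of the boundary conditions, of $C_m$, and of $u_m(\ell)$ all check out, and the convergence estimate $\Upsilon_m=O(m^{-3})$ justifies the termwise identification.

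Your route is, however, genuinely different from the paper's. The paper does not solve the problem for $T_z$ directly. Instead, it approximates $T_z$ by the $L^1$-normalised functions $f_{\alpha,\eta}(x,y)=R_{\alpha,\eta}\sinh(\alpha y)\chi_{[z-\eta,z+\eta]}(x)$, proves in Lemma~\ref{limitfalphanew} that $f_{\alpha,\eta}\to T_z$ in $\hh(\Omega)$ as $(\alpha^{-1},\eta)\to(0,0)$ (so that the gap functions converge uniformly), and then reads off the limit from the explicit formulas already obtained in Theorem~\ref{gap_limit_swiss}/Proposition~\ref{exponentialcross}, specialised to $D=\emptyset$ and $g=\chi_{[z-\eta,z+\eta]}$. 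The Fourier coefficients $\secondo_m(\eta)=\tfrac{4}{\pi m}\sin(mz)\sin(m\eta)$ satisfy $\secondo_m(\eta)/(2\eta)\to\tfrac{2}{\pi}\sin(mz)$, and the limit drops out of \eqref{Em_asym}.

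Each approach has its merits. The paper's method recycles the heavy machinery of Section~\ref{expl sol}, so almost no new algebra is needed once those lemmas are in place; it also sidesteps any question about the a~priori regularity of $u_{T_z}$, since everything is obtained as a limit of classical solutions. Your method is self-contained and considerably shorter if one does not already have Theorem~\ref{gap_limit_swiss}: it exploits the fact that $T_z$ is supported on the free edges, so $u_{T_z}$ is biharmonic in the interior and the entire force enters only through the inhomogeneous Neumann-type condition $u_m'''(\ell)-(2-\sigma)m^2u_m'(\ell)=-\sin(mz)/\pi$. The only point you leave implicit is interior elliptic regularity (to pass from the weak solution to a classical ODE for each $u_m$), but this is standard for the biharmonic operator with a distributional right-hand side supported on $\partial\Omega$.
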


Let us explain how Proposition \ref{GTxi} suggests a conjecture for the solution of \eqref{functional} when $D=\emptyset$.
Let $\Upsilon_m$ be as in \eqref{Um} and consider the function $\Phi$ defined as
$\Phi(x):=\serie \Upsilon_m\sin^2(m x)$, for every $x\in[0,\pi]$.
Note that $\Phi(x)>0$ for all $x\in]0,\pi[$ and
\neweq{propPhi}
\Phi(0)=\Phi(\pi)=0\, ,\quad\Phi(\tfrac\pi2 )=\sum_{k=0}^\infty\Upsilon_{2k+1}\, ,\quad\Phi'(\tfrac\pi2 )=0\, ,\quad\Phi''(\tfrac\pi2 )<0\, .
\endeq
Some numerical computations and \eqref{propPhi} suggest that $\Phi$ achieves its maximum at $x=\pi/2$:
\neweq{crucial}
\Phi(\tfrac\pi2 )> \Phi(x)\qquad\forall x\neq \tfrac\pi2\, .
\endeq

Moreover, by H\"older's inequality, Proposition \ref{GTxi}, and condition \eqref{crucial}, for every $x,z\in]0,\pi[$
\begin{eqnarray*}
\left|\GG_{\overline T_{z}}(x)\right| &=& \frac{4\, \sqrt{2}}{\pi(1-\sigma)\, \sqrt{\GG_{T_{z}}(z)}}\
\left|\serie \Upsilon_m\, \sin(mz)\, \sin(m x)\right|\\
\ &\le& \frac{4\, \sqrt{2}}{\pi(1-\sigma)\, \sqrt{\GG_{T_{z}}(z)}}\
\serie \sqrt{\Upsilon_m}\, |\sin(mz)|\, \sqrt{\Upsilon_m}\, |\sin(m x)|\\
 &\le& \frac{4\, \sqrt{2}}{\pi(1-\sigma)\, \sqrt{\GG_{T_{z}}(z)}}\
\bigg(\serie \Upsilon_m\, \sin^2(mz)\bigg)^{\tfrac12}\cdot\bigg(\serie \Upsilon_m\, \sin^2(mx)\bigg)^{\tfrac12}\\
 &=& \frac{2 \sqrt{2}}{\sqrt{\pi(1-\sigma)}}\, \Phi(x)^{\tfrac12}\le \frac{2 \sqrt{2}}{\sqrt{\pi(1-\sigma)}}\, \Phi(\tfrac\pi2 )^{\tfrac12}\, .
\end{eqnarray*}
Note that the above application of the H\"older inequality yields a strict inequality whenever $z\neq x$.
Therefore, after taking the maximum over $[0,\pi]$ we deduce that
$\GG_{\overline T_{z}}^\infty< \tfrac{2 \sqrt{2}}{\sqrt{\pi(1-\sigma)}}\, \Phi(\tfrac\pi2 )^{1/2}$ for every $z\neq \tfrac\pi2$ and that for $z= \tfrac\pi2$ the equality holds
$\GG_{\overline T_{\pi/2}}^\infty=\GG_{\overline T_{\pi/2}}(\tfrac\pi2 )=\frac{2 \sqrt{2}}{\sqrt{\pi(1-\sigma)}}\, \Phi(\tfrac\pi2 )^{1/2}$.
Hence, if \eqref{crucial} holds, then we would infer that
\begin{center}
{\em for all $z\in]0,\pi[$ we have $\GG_{\overline T_{z}}^\infty\le\GG_{\overline T_{\pi/2}}^\infty$ with equality if and only if $z=\pi/2$}.
\end{center}
This statement would prove that among all concentrated loads on the free edges of the plate $\Omega$, the largest maximal gap is obtained when the load is concentrated (with opposite signs) at the midpoints $(\pi/2,\pm\ell)$. A numerical support of this fact is provided by Table~\ref{tabella1bis} below. The values collected there have been obtained using the software Mathematica, approximating the Fourier series for $\GG_{\overline T_{z}}^\infty$ up to the 10,000-th term.
\begin{table}[h]
\caption{Numerical values of $10^4\times \GG^{\infty}_{\overline T_z}$ and $10^4\times \GG^{\infty}_{T_z}$ (with $\ell={\pi}/{150}$ and $\sigma=0.2$).}\label{tabella1bis}
\vspace{.1 in}
\begin{center}
\resizebox{\columnwidth}{!}{
\begin{tabular}{|c|c|c|c|c|c|c|c|c|c|c|c|c|c|}
\hline
$z$\!&\!$\frac{\pi}{20}$\!&\!$\frac{\pi}{18}$\!&\!$\frac{\pi}{16}$\!&\!$\frac{\pi}{14}$\!&\!$\frac{\pi}{12}$\!&\!$\frac{\pi}{10}$\!&\!$\frac{\pi}{8}$
\!&\!$\frac{\pi}{6}$\!&\!$\frac{\pi}{4}$\!&\!$\frac{\pi}{2}$\\
\hline
$10^4\times\GG^{\infty}_{\overline T_z}$\!&\!$627.809$\!&\!$659.067$\!&\!$695.691$\!&\!$739.38$\!&\!$792.677$\!&\!$859.592$\!&\!$946.815$\!&\!$1066.21$
\!&\!$1238.29$\!&\!$1429.87$\\
\hline
$10^4\times \GG^{\infty}_{T_z} $\!&\!$19.326$\!&\!$21.354$\!&\!$23.854$\!&\!$27.012$\!&\!$31.123$\!&\!$36.686$\!&\!$44.609$\!&\!$56.687$\!&\!$76.596$\!&\!$102.23$\\
\hline
\end{tabular}}
\end{center}
\end{table}

It is evident that the worst case is attained for $z=\tfrac\pi 2$ and that the map $z\mapsto \GG_{\overline T_{z}}^\infty$ is
increasing on $[0,\pi/2]$ (note that it is symmetric with respect to $\pi/2$). For later use, we put in Table \ref{tabella1bis}
also the values of $ \GG^{\infty}_{T_z}$.

\section{Weakening the Force with Cross-Type Reinforcements}\label{easy}

In this section, we minimize the energy $\mathbb{E}_2$ given in \eqref{energy-f} finding the explicit solution and, in turn, the explicit gap function for particular choices of forces $f$ and reinforcements $D$.
We take symmetric cross-type reinforcements $D\in \mathcal C$ (see Definition \ref{spazi}) with one horizontal arm and $2N+1$ vertical arms for some non-negative integer $N$. More precisely,  fix
$0<\mu<\tfrac{(2N+1)\pi}{4(N+1)}$, $0<\varepsilon<\ell$,
(where the first condition prevents overlapping of vertical arms) and consider the set
\neweq{Deps}
\small
D_{\varepsilon,\mu}^{N}:=\big(]0,\pi[\times ]-\varepsilon,\varepsilon[ \big) \bigcup_{ i=1}^{2N+1}
\left(\left(\tfrac{\pi i}{2N+2}-\tfrac{\mu}{2N+1},\tfrac{\pi i}{2N+2}+\tfrac{\mu}{2N+1}\right)\times ]-\ell,\ell[\right)\,.
\endeq
We will drop the subscripts in $D^N_{\varepsilon, \mu}$ in order to lighten the notation, writing them when needed to avoid confusion.
To compare the effect of the reinforcements on the torsional instability, we are keeping the area of the set $D^{N}$ fixed, indeed we have
$|D^N|=2\pi\varepsilon+4\mu(\ell-\varepsilon)$ for any $N$. Furthermore, for $g\in L^2(]0,\pi[)$ and $\alpha > 0$ with $\alpha\not\in\mathbb{N}$
(since this simplifies some computations), following the suggestion of Theorem \ref{fsymmetry2}, we consider the odd function
\neweq{f2sinh}
f_\alpha(x,y):=R_\alpha \sinh(\alpha y)g(x)
\end{equation}
with $R_\alpha:=\tfrac{\alpha}{2C_g(\cosh(\alpha \ell)-1)}$ and  $C_g:=\int_0^{\pi}|g(x)|\,dx$, so that $\|f_\alpha\|_{L^1}=1$.
We define
\begin{equation}\label{Emd}
\Emo:=\tfrac{2\, \primo_m \, \Upsilon_m}{C_g(1-\sigma)}   \text{ and } \overline \omega_m:=\tfrac{\primo_m}{C_g}\,\tfrac{(1+\sigma)\sinh (m\ell) \cosh (m\ell) +(1-\sigma)m\ell}{(1\!-\!\sigma)m^2[(3\!+\!\sigma)\sinh(m\ell)\cosh(m\ell)+(1\!-\!\sigma)m\ell]}\,,
\end{equation}
where the coefficients $\Upsilon_m$ are as defined in \eq{Um}, and
\neweq{gtilde}
\small
\primo_m:=\frac{2}{\pi}\ipi g(x)\smx\, dx-\tfrac{2d}{\pi(1+d)} \sum_{ i=1}^{2N+1}\int_{\tfrac{\pi i}{2N+2}-\tfrac{\mu}{2N+1}}^{\tfrac{\pi i}{2N+2}+\tfrac{\mu}{2N+1}} g(x)\smx\, dx\,.
\endeq

Then, we obtain an explicit form for the gap function corresponding to problem \eqref{loadpb} with $f=f_\alpha$ and $D=D^N$, and we analyze its asymptotic behavior as $\alpha\to +\infty$.

\begin{theorem}\label{gap_limit_swiss}
Let $\alpha>0$ with $\alpha\not\in\mathbb{N}$, let $u_\alpha$ be the unique solution of \eqref{loadpb} with $f=f_\alpha$ and $D=D^N$, let $\GG_\alpha$ be as in \eqref{funzionale}
with $u_{f,D}=u_\alpha$. As $\alpha\to+\infty$, $\GG_\alpha(x)= \serie \Em(\alpha) \smx$ converges uniformly on $[0,\pi]$ to the function $\overline \GG(x):=  \serie \Emo \smx$, where the Fourier coefficients $\Em(\alpha)$ are so that
$\Em(\alpha)=\Emo-\tfrac{  \overline \omega_m}{\alpha }+o\left(\tfrac{1}{\alpha}\right)$,
with $\Emo$ and  $\overline \omega_m>0$ given in \eqref{Emd}.
\end{theorem}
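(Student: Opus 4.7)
The plan is to exploit the hinged boundary conditions at $x=0,\pi$ to reduce \eqref{loadpb} to a family of fourth-order ODEs in $y$ indexed by $m$, solve each explicitly in the regions separated by the reinforcement $D^N$, and then read off the asymptotics in $\alpha$. Writing $u_\alpha(x,y)=\sum_m \phi_{m,\alpha}(y)\sin(mx)$, the piecewise-constant structure of $(1+d\chi_{D^N})$ makes the right-hand side of \eqref{loadpb} equal to $R_\alpha\sinh(\alpha y)g(x)/(1+d)$ on $|y|<\varepsilon$ (where the whole horizontal strip lies in $D^N$) and to $R_\alpha\sinh(\alpha y)g(x)/(1+d\chi_{\mathrm{vert}}(x))$ on $\varepsilon<|y|<\ell$, where $\chi_{\mathrm{vert}}$ is the characteristic function of the union of vertical strips. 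Identifying $\gamma_m$ from \eqref{gtilde} as the $m$-th Fourier sine coefficient of $g(x)/(1+d\chi_{\mathrm{vert}}(x))$, each mode $\phi_{m,\alpha}$ satisfies $\phi^{(4)}-2m^2\phi''+m^4\phi = R_\alpha\sinh(\alpha y)c_m(y)$ with $c_m(y)=g_m/(1+d)$ on $|y|<\varepsilon$ and $c_m(y)=\gamma_m$ on $\varepsilon<|y|<\ell$ (where $g_m=\tfrac{2}{\pi}\int_0^\pi g(x)\sin(mx)\,dx$), together with the free-plate conditions $\phi''(\pm\ell)-\sigma m^2\phi(\pm\ell)=0$ and $\phi'''(\pm\ell)-(2-\sigma)m^2\phi'(\pm\ell)=0$. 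Oddness of $f_\alpha$ in $y$ and symmetry of $D^N$ force $\phi_{m,\alpha}$ to be odd, reducing the problem to $(0,\ell)$ with $\phi(0)=\phi''(0)=0$ and $\beta_m(\alpha)=2\phi_{m,\alpha}(\ell)$.

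Next, I solve the ODE on each sub-interval by combining the fundamental system $\{\sinh(my),\cosh(my),y\sinh(my),y\cosh(my)\}$ with the particular solution $R_\alpha c_m\sinh(\alpha y)/(\alpha^2-m^2)^2$, and fix the six unknowns by four $C^3$-matching conditions at $y=\varepsilon$ and two free-plate conditions at $y=\ell$. The key asymptotic observation is that the normalization $R_\alpha=\alpha/[2C_g(\cosh(\alpha\ell)-1)]$ gives $R_\alpha\sinh(\alpha\ell)=R_\alpha\cosh(\alpha\ell)=\alpha/(2C_g)$ modulo exponentially small error in $\alpha$, whereas $R_\alpha\sinh(\alpha\varepsilon)$ and $R_\alpha\cosh(\alpha\varepsilon)$ are $O(\alpha e^{-\alpha(\ell-\varepsilon)})$. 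Consequently the particular solutions contribute exponentially small data at $y=\varepsilon$, and the nonsingularity of the Wronskian forces both the even outer coefficients and the differences between the inner and outer odd coefficients to be exponentially small; thus on $(0,\ell)$ the homogeneous solution reduces, to any polynomial order in $1/\alpha$, to the odd combination $a\sinh(my)+by\cosh(my)$. At $y=\ell$ the outer particular solution contributes $\phi_p(\ell)=O(\alpha^{-3})$, $\phi_p'(\ell)=O(\alpha^{-2})$, $\phi_p''(\ell)=\gamma_m/(2C_g\alpha)+O(\alpha^{-3})$, $\phi_p'''(\ell)=\gamma_m/(2C_g)+O(\alpha^{-2})$.

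Imposing the free-plate conditions on $a\sinh(my)+by\cosh(my)$ at $y=\ell$ then produces an $\alpha$-independent $2\times 2$ linear system $M(a,b)^\top=(p_1,p_2)^\top$ in which $p_1=-\gamma_m/(2C_g\alpha)+O(\alpha^{-3})$ and $p_2=-\gamma_m/(2C_g)+O(\alpha^{-2})$ collect the particular-solution data at $y=\ell$. A direct computation yields $\det M=(1-\sigma)m^4[(3+\sigma)\sinh(m\ell)\cosh(m\ell)+(1-\sigma)m\ell]$, and Cramer's rule gives
$$\phi_{m,\alpha}(\ell)=a\sinh(m\ell)+b\ell\cosh(m\ell)=\frac{p_1 m^2[(1+\sigma)\sinh(m\ell)\cosh(m\ell)+(1-\sigma)m\ell]-2p_2 m\sinh^2(m\ell)}{\det M}.$$
Substituting the expansions of $p_1,p_2$ and comparing with \eqref{Um} and \eqref{Emd} reproduces precisely $\beta_m(\alpha)=\overline\beta_m-\overline\omega_m/\alpha+o(1/\alpha)$. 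Uniform convergence of $\GG_\alpha$ to $\overline\GG$ on $[0,\pi]$ follows from the explicit bound $|\beta_m(\alpha)|\le C|\gamma_m|\,m^{-3}$ uniformly in $\alpha$ (with $\gamma_m$ bounded, since $g\in L^2$ and $1/(1+d\chi_{\mathrm{vert}})$ is bounded) together with the Weierstrass M-test; alternatively, the uniform $H^2_*$-bound on $u_\alpha$ coming from $\|f_\alpha\|_{L^1}=1$ combined with the Sobolev embedding $H^2(\Omega)\hookrightarrow C^0(\overline\Omega)$ yields the same conclusion. The main technical obstacle will be the asymptotic bookkeeping: simultaneously controlling the exponentially small corrections from the matching at $y=\varepsilon$ and the polynomial $1/\alpha$ expansion of the boundary data at $y=\ell$, while carrying out the algebraic simplifications that collapse Cramer's rule into the compact form of $\overline\omega_m$ given by \eqref{Emd}.
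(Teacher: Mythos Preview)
Your approach is correct and noticeably more direct than the paper's. The paper does not exploit the oddness of $f_\alpha$ at the outset; instead it passes to an auxiliary problem with right-hand side $K_\alpha e^{\alpha y}g(x)/(1+d\chi_{D^N})$, derives the full explicit solution in Proposition~\ref{exponentialcross} by computing twelve constants $A_i,B_i,C_i,D_i$ ($i=1,2,3$) on the three horizontal strips, and only afterwards takes the odd part and lets $\alpha\to\infty$. Your reduction via oddness to the half-interval $(0,\ell)$, together with the observation that the $C^3$-matching data at $y=\varepsilon$ are $O(e^{-\alpha(\ell-\varepsilon)})$ so that the Wronskian forces the even outer coefficients and the inner/outer discrepancies to be exponentially small, collapses everything to the $2\times2$ system in $(a,b)$ without ever writing those twelve constants. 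The Cramer-rule computation you give agrees with the paper's limiting coefficients $\overline B_2,\overline C_2$ and reproduces $\Emo$ and $\overline\omega_m$ exactly.

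Two remarks on trade-offs. First, your Sobolev-embedding alternative for uniform convergence gives only precompactness from $\|f_\alpha\|_{L^1}=1$; to identify the limit you still need the distributional convergence $f_\alpha/(1+d\chi_{D^N})\to\overline h^o$ in $H^{-2}_*(\Omega)$, which is precisely the odd version of Lemma~\ref{limitfalpha_rinforzo} in the paper (an integration by parts in $y$). Second, the paper's longer route has a payoff you forgo: Proposition~\ref{exponentialcross} furnishes the exact $\beta_m(\alpha)$ for every finite $\alpha$ via \eqref{betam} and \eqref{Im}, which is what feeds the numerical Tables and the proof of Proposition~\ref{GTxi}. Your asymptotic shortcut proves the theorem as stated but does not deliver those closed-form coefficients.
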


In Section \ref{expl sol} we prove Theorem \ref{gap_limit_swiss}. We derive the explicit value of $\Em(\alpha)$ in \eqref{betam}. Furthermore, we show that $\overline \GG(x)$ is the gap function corresponding to a solution of the limit problem \eqref{weaklimit2}.

We exploit Theorem \ref{gap_limit_swiss} to numerically solve the minimaxmax problem \eq{minimaxmax2}. More precisely, we fix $\ell=\pi/150$ and $\sigma=0.2$ (two reasonable values for plates modeling the deck of a bridge, see \cite{befega}). Moreover,
we take $f_\alpha$ in \eq{f2sinh} with $g(x)=\sin (n x)$ for $n=1,\dots,10$ and we call $f^n$ its $\hh(\Omega)$ limit as $\alpha \to +\infty$ (see Lemma \ref{limitfalpha_rinforzo}) and $\overline\GG_n$ the corresponding gap function. Then, we consider
\begin{equation}\label{FDn}
{\mathcal F}=\{f^1,...,f^{10}\}\quad \text{and} \quad  {\mathcal D}=\{D^0,...,D^{5}\}\,.
\end{equation}
The results are summarized in Table \ref{tabella1}, in terms of the maximal gap $\overline\GG^{\infty}_n$.
The numerical values in Table \ref{tabella1} have been obtained using the software Mathematica, approximating the Fourier series for $\overline \GG_n$ up
to the 250-th term.

\begin{table}[h]
\begin{center}
\caption{Numerical values of $10^4\times\overline\GG^{\infty}_n$, with $\ell={\pi}/{150}$,  $\sigma=0.2$, $d=2$, $g(x)=\sin(nx)$, $D=D^{N}$, $\mu=0.3$ (above) and $\mu=0.5$ (below).}\label{tabella1}
\vspace{.1 in}
\resizebox{\columnwidth}{!}{
\begin{tabular}{|c|c|c|c|c|c|c|c|c|c|c|c|c|c|}
\hline
$10^4\times$\!&\!$\overline\GG^{\infty}_1$\!&\!$\overline\GG^{\infty}_2$\!&\!$\overline\GG^{\infty}_3$\!&\!$\overline\GG^{\infty}_4$\!&\!$\overline\GG^{\infty}_5$\!&\!$\overline\GG^{\infty}_6$\!&\!$\overline\GG^{\infty}_7$\!&\!$\overline\GG^{\infty}_8$\!&\!$\overline\GG^{\infty}_9$\!&\!$\overline\GG^{\infty}_{10}$
\\
\hline
$\emptyset$ \!&\!$65.444$\!&\!$16.357$\!&\!$7.2665$\!&\!$4.0849$\!&\!$2.6123$\!&\!$1.8123$\!&\!$1.3300$\!&\!$1.0170$\!&\!$0.8023$\!&\!$0.6488$\\
\hline
$D^0$ \!&\!$47.113$\!&\!$15.980$\!&\!$14.249$\!&\!$4.4296$\!&\!$11.422$\!&\!$2.6591$\!&\!$7.5961$\!&\!$2.0675$\!&\!$3.3673$\!&\!$1.6048$\\
\hline
$D^1$ \!&\!$53.964$\!&\!$13.158$\!&\!$6.3585$\!&\!$4.0133$\!&\!$3.1797$\!&\!$2.9515$\!&\!$10.284$\!&\!$1.0582$\!&\!$9.9445$\!&\!$2.5730$\\
\hline
$D^2$ \!&\!$55.292$\!&\!$13.979$\!&\!$5.9848$\!&\!$3.4987$\!&\!$2.1837$\!&\!$1.8092$\!&\!$1.4864$\!&\!$1.3153$\!&\!$1.2857$\!&\!$2.8377$\\
\hline
$D^3$ \!&\!$55.839$\!&\!$13.892$\!&\!$6.2568$\!&\!$3.3920$\!&\!$2.2970$\!&\!$1.6152$\!&\!$1.2488$\!&\!$1.0158$\!&\!$0.8667$\!&\!$0.7611$\\
\hline
$D^4$ \!&\!$56.135$\!&\!$14.080$\!&\!$6.2664$\!&\!$3.5181$\!&\!$2.1798$\!&\!$1.6029$\!&\!$1.1965$\!&\!$0.9264$\!&\!$0.7631$\!&\!$0.6461$\\
\hline
$D^{5}$\!&\!$56.320$\!&\!$14.050$\!&\!$6.2225$\!&\!$3.5437$\!&\!$2.2726$\!&\!$1.5216$\!&\!$1.1736$\!&\!$0.8864$\!&\!$0.7190$\!&\!$0.5998$\\
\hline
\end{tabular}}
\end{center}
\par
\begin{center}
\resizebox{\columnwidth}{!}{
\begin{tabular}{|c|c|c|c|c|c|c|c|c|c|c|c|c|c|}
\hline
$10^4\times$\!&\!$\overline\GG^{\infty}_1$\!&\!$\overline\GG^{\infty}_2$\!&\!$\overline\GG^{\infty}_3$\!&\!$\overline\GG^{\infty}_4$\!&\!$\overline\GG^{\infty}_5$\!&\!$\overline\GG^{\infty}_6$\!&\!$\overline\GG^{\infty}_7$\!&\!$\overline\GG^{\infty}_8$\!&\!$\overline\GG^{\infty}_9$\!&\!$\overline\GG^{\infty}_{10}$
\\
\hline
$\emptyset$ \!&\!$65.444$\!&\!$16.357$\!&\!$7.2665$\!&\!$4.0849$\!&\!$2.6123$\!&\!$1.8123$\!&\!$1.3300$\!&\!$1.0170$\!&\!$0.8023$\!&\!$0.6488$\\
\hline
$D^0$ \!&\!$37.707$\!&\!$14.748$\!&\!$16.541$\!&\!$5.3424$\!&\!$7.7463$\!&\!$3.7421$\!&\!$2.8388$\!&\!$1.8136$\!&\!$5.1181$\!&\!$1.1708$\\
\hline
$D^1$ \!&\!$46.544$\!&\!$11.277$\!&\!$5.8180$\!&\!$4.0207$\!&\!$3.5078$\!&\!$3.6263$\!&\!$13.807$\!&\!$1.1909$\!&\!$12.949$\!&\!$3.1775$\\
\hline
$D^2$ \!&\!$48.602$\!&\!$12.383$\!&\!$5.2331$\!&\!$3.0977$\!&\!$2.2697$\!&\!$1.7983$\!&\!$1.5803$\!&\!$1.4839$\!&\!$1.5789$\!&\!$3.7815$\\
\hline
$D^3$ \!&\!$49.473$\!&\!$12.287$\!&\!$5.5878$\!&\!$2.9641$\!&\!$2.0589$\!&\!$1.4918$\!&\!$1.1993$\!&\!$1.0118$\!&\!$0.9058$\!&\!$0.8400$\\
\hline
$D^4$ \!&\!$49.950$\!&\!$12.559$\!&\!$5.6012$\!&\!$3.1382$\!&\!$1.9056$\!&\!$1.4510$\!&\!$1.1105$\!&\!$0.8695$\!&\!$0.7391$\!&\!$0.6464$\\
\hline
$D^{5}$\!&\!$50.251$\!&\!$12.526$\!&\!$5.5421$\!&\!$3.1782$\!&\!$2.0384$\!&\!$1.3462$\!&\!$1.0587$\!&\!$0.8055$\!&\!$0.6678$\!&\!$0.5581$\\
\hline
\end{tabular}}
\end{center}
\end{table}

Several comments are in order. First we notice that, as expected from the statement of Theorem \ref{gap_limit_swiss}, the results do not depend on $\eps$. Moreover, $\mu=0.3$ means that the free edges of the plate are covered by the reinforcement on a percentage of $19\%$ of their length, whereas $\mu=0.5$ means that such a percentage is $31.8\%$. It is worth noting that there is no monotonicity of $\overline\GG^\infty_n$ with respect to either the number of branches, or to the
frequency of $\sin(nx)$, nor to the reinforcement thickness $\mu$.
Also, we observe that each forcing term has its own ``best truss'' yielding a minimal maximal gap: the pattern is quite clear
and it follows a descending diagonal in the two Tables \ref{tabella1}. Basically, we see that $D^{n-1}$ (i.e., the cross with $2n-1$ vertical arms) is the ``best truss'' for $g(x)=\sin(nx)$ and the reason is that the plate is reinforced in the points where $g$ attains either a maximum or a minimum;
we did not display all the related lines but the same pattern holds true until $N=10$.
In particular, if $n=2$ we know that $D^1$
is the best reinforcement since there are parts of the truss under the two extremal points of $g(x)=\sin(2x)$, see the left-hand picture in Figure \ref{seni}
where we depict the longitudinal behavior of $g(x)=\sin(2x)$ and the truss $D^1$ (black spots on the horizontal axis).
\begin{figure}[h]
\begin{center}
\begin{tikzpicture}[scale=0.8, domain=0:3.14]
  \draw[->] (-0.2,0) -- (3.5,0) node[below] {\small $x$};
  \draw[->] (0,-1.2) -- (0,1.5) node[left] {\small $y$};
  \draw[thick]   plot[smooth] (\x,{sin(2*\x r)}) ;
  \fill  (0.8,0) ellipse (2pt and 1pt);
  \fill  (1.6,0) ellipse (2pt and 1pt);
  \fill  (2.4,0) ellipse (2pt and 1pt);
  \end{tikzpicture}
\hspace{5mm}
\begin{tikzpicture}[scale=0.8, domain=0:3.14]
  \draw[->] (-0.2,0) -- (3.5,0) node[below] {\small $x$};
  \draw[->] (0,-1.2) -- (0,1.5) node[left] {\small $y$};
  \draw[thick]   plot[smooth] (\x,{sin(7*\x r)}) ;
  \fill  (0.8,0) ellipse (2pt and 1pt);
  \fill  (1.6,0) ellipse (2pt and 1pt);
  \fill  (2.4,0) ellipse (2pt and 1pt);
\end{tikzpicture}
\caption{The forces $g(x)=\sin(2x)$ (left) and $g(x)=\sin(7x)$ (right) with the truss $D^1$.}\label{seni}
\end{center}
\end{figure}
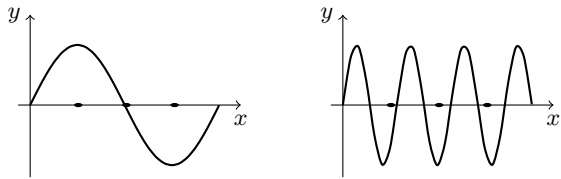

We remark that some trusses aggravate the torsional instability, i.e.,
they increase the maximal gap $\overline\GG^{\infty}_n$: this is due to a bad combination between the shape of the forcing term $g$ and the location of $D$.
For example, we observe that the reinforcement $D^0$ improves the performance when $g(x)=\sin(nx)$ with $n=1,2$, while for other values of $n$ the
torsional performance is worse than that of the unstiffened plate (with $D=\emptyset$).
We also observe that there are some ``anomalous values'' of
$\overline\GG^{\infty}_n$, see e.g., the values corresponding to $D^1$ and $n=7$ or $n=9$: they are
considerably larger than the other values in the same column and the reason is again that the place where $D$ acts interacts badly with $g$. In particular,
we notice that both $\sin(7x)$ and $\sin(9x)$ have the same sign in correspondence of $x=\frac{\pi}{4},\frac{\pi}{2},\frac{3\pi}{4}$ that are the centers of
the three vertical arms of $D^1$; in particular,
$g(x)=\sin(7x)\ \Longrightarrow\ g(\tfrac{\pi}{4})=g(\tfrac{3\pi}{4})=-\tfrac{1}{\sqrt2 }\, ,\ g(\tfrac{\pi}{2})=-1$,
see the right picture in Figure \ref{seni}.

Next, we exploit Theorem \ref{gap_limit_swiss} to solve analytically the maxmax problem \eqref{GGD} when $D$ reduces to one horizontal bar (including the free plate). In general, maximizing a Fourier series is a tricky problem that can be solved only for particular choices
of the coefficients, see e.g., \cite{polya}. This is why we focus on the set $\Gamma$ of functions $g=g(x)$ satisfying one of the following:
\begin{itemize}
\item[$\bullet$] $g(x)=\sin(m x)$ with $m\in \N$.
\item[$\bullet$] $g(x)=\displaystyle {\sum_{m\geq N} \secondo_m \sin(mx)}$ with $\{\secondo_m\}_{m\in \N} \subset \ell^2$ and $N\in \N$ large enough.
\item[$\bullet$] $g(x)=\sin(mx)+\sin(3m x)$ with $m \in \N$.
\item[$\bullet$]  $g(x)=\displaystyle {\sum_{m=1}^N  \sin((2m-1)x)}$ with $N\in \N$ sufficiently large.
\end{itemize}
Then, we define the class
$${\mathcal F}_\Gamma:=\left\{f:\, f=\lim_{\alpha\to\infty}f_\alpha \mbox{ in }\hh(\Omega)\,, \mbox{ with $f_\alpha$ as in \eq{f2sinh} and $g\in\Gamma$}\, \right\}$$
and, in Section \ref{expl sol}, we prove the following.

\begin{theorem}\label{examples}
Let ${\mathcal F}={\mathcal F}_\Gamma$ be as above and assume that $D=]0,\pi[\times ]-\varepsilon,\varepsilon[$ for some $0<\varepsilon<\ell$ (i.e., $\mu=0$ in \eqref{Deps}). Then, the solution of the maxmax problem \eqref{GGD} is given by
$\left[\lim_{\alpha\to\infty}R_\alpha \sinh(\alpha y)\right]\sin(x)$, where the limit is in $\hh(\Omega)$.
\end{theorem}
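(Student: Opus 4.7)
The plan is to combine Theorem~\ref{gap_limit_swiss} (which identifies the limiting gap function as an explicit Fourier series) with a monotonicity property of the coefficients $\Upsilon_m$, and then argue case by case across the four families in $\Gamma$ that $g(x)=\sin(x)$ beats every competitor. Since $\mu=0$, the second sum in \eqref{gtilde} vanishes and $\primo_m$ coincides with the $m$-th Fourier sine coefficient of $g$, namely $\frac{2}{\pi}\ipi g(x)\smx\,dx$. Theorem~\ref{gap_limit_swiss} then yields
$$\overline\GG(x)=\frac{2}{C_g(1-\sigma)}\serie \primo_m\,\Upsilon_m\,\smx,$$
and the problem reduces to maximizing $\overline\GG^\infty$ over $g\in\Gamma$.

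The crucial technical lemma is strict monotonicity of $m\mapsto\Upsilon_m$. Setting $t=m\ell$, this reduces to proving that
$$\psi(t):=\frac{\sinh^2 t}{t^3\bigl[(3+\sigma)\sinh t\cosh t+(1-\sigma)t\bigr]}$$
is strictly decreasing on $]0,\infty[$; I would compute $\psi'(t)$ directly and verify the negative sign via standard hyperbolic identities. With this in hand, Case~1 is immediate: for $g(x)=\smx$ one has $C_g=2$ and $\primo_k=\delta_{km}$, hence $\overline\GG^\infty=\Upsilon_m/(1-\sigma)$, which attains its unique maximum at $m=1$ with value $\Upsilon_1/(1-\sigma)$. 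This is the target against which all other families must be compared.

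For each of the three remaining families I would show that $\overline\GG^\infty$ stays strictly below $\Upsilon_1/(1-\sigma)$. For $g(x)=\sin(mx)+\sin(3mx)$ the triangle inequality gives $\overline\GG^\infty\leq\tfrac{2(\Upsilon_m+\Upsilon_{3m})}{C_g(1-\sigma)}$; the identity $\sin(mx)+\sin(3mx)=2\sin(2mx)\cos(mx)$ produces the explicit value $C_g=8/3$, after which monotonicity of $\Upsilon_m$ (separating $m=1$ from $m\geq 2$) closes the comparison. For $g(x)=\sum_{m=1}^N\sin((2m-1)x)$ I would use the closed form $g(x)=\sin^2(Nx)/\sin(x)$ to show that $C_g=\ipi \sin^2(Nx)/\sin(x)\,dx$ grows unboundedly with $N$, while $\sum_{m=1}^N\Upsilon_{2m-1}$ stays bounded (by the decay from the key lemma); hence the ratio, and thus $\overline\GG^\infty$, drops below $\Upsilon_1/(1-\sigma)$ for $N$ large. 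The high-frequency family $g=\sum_{m\geq N}\secondo_m\smx$ is handled by Cauchy--Schwarz applied to $\sum_{m\geq N}\secondo_m\,\Upsilon_m\,\smx$, combined with the decay of $\Upsilon_m$, yielding a bound that vanishes as $N\to\infty$.

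The main obstacle is the monotonicity of $\Upsilon_m$: the exponentially growing quantities $\sinh^2(m\ell)$ in the numerator and $\sinh(m\ell)\cosh(m\ell)$ in the denominator largely cancel, so soft estimates cannot conclude and one must carry out a careful derivative sign analysis on $\psi$. Once that lemma is in hand, the remaining three comparisons reduce to routine --- though somewhat tedious --- $L^1$-norm computations and series bounds on Fourier coefficients, producing the desired uniqueness of the maximizer $g(x)=\sin(x)$.
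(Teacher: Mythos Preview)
Your overall strategy --- reduce via Theorem~\ref{gap_limit_swiss} to the limiting gap, prove strict monotonicity of $m\mapsto\Upsilon_m$, and then handle the four families in $\Gamma$ case by case --- is exactly the paper's, but two of your case arguments have real gaps. For $g(x)=\sin(mx)+\sin(3mx)$ you obtain (with $C_g=8/3$) the bound $\overline\GG^\infty\leq\tfrac{3}{4(1-\sigma)}(\Upsilon_m+\Upsilon_{3m})$ and then assert that ``monotonicity of $\Upsilon_m$ closes the comparison''. It does not: monotonicity alone gives only $\Upsilon_m+\Upsilon_{3m}<2\Upsilon_1$, hence the useless bound $\tfrac32\Upsilon_1/(1-\sigma)$. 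Even after you separate $m=1$ from $m\geq2$, the $m=1$ case still requires the \emph{quantitative} inequality $\Upsilon_3<\tfrac13\Upsilon_1$; this is an additional computation the paper carries out explicitly (``$3\overline\Upsilon_3<\overline\Upsilon_1$''), not a corollary of the decreasing property.

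For the high-frequency family $g=\sum_{m\geq N}\secondo_m\smx$, Cauchy--Schwarz yields $\overline\GG^\infty\leq\frac{2}{C_g(1-\sigma)}\,\|\secondo\|_{\ell^2}\bigl(\sum_{m\geq N}\Upsilon_m^2\bigr)^{1/2}$; the tail factor does vanish as $N\to\infty$, but the prefactor $\|\secondo\|_{\ell^2}/C_g$ is \emph{not} uniformly bounded over $\ell^2$ sequences (Dirichlet-kernel-type choices $\secondo_m=1$ on a block of length $N$ make it grow like $\sqrt{N}/\log N$), so your bound cannot produce a single threshold $N$ valid for all admissible $\secondo$. The paper avoids this by using instead the trivial estimate $|\secondo_m|\leq\frac{2}{\pi}\ipi|g|\,dx=\frac{2C_g}{\pi}$, which cancels $C_g$ and gives the uniform bound $\overline\GG^\infty\leq\frac{4}{\pi(1-\sigma)}\sum_{m\geq N}\Upsilon_m$; a uniform $N$ then follows from the summability of the $\Upsilon_m$.
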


Theorem \ref{examples} states that the worst case as $\alpha\to\infty$ corresponds to the function $g(x)=\sin (x)$. However, $g(x)=\sin (x)$ seems not to be the worst case in
general: to see this, compare the values of $\overline\GG^{\infty}_1$ given in Table \ref{tabella1} with $\GG^\infty_{T_{\pi/2}}$ given in Table~\ref{tabella1bis}.

\section{Weakening Resonant Forces with Polygonal Reinforcements}\label{moregen}

In this section, we intend to study numerically the gap function \eqref{funzionale} and the related minimaxmax problem \eqref{minimaxmax2} in the case the class $\F$ contains some ``resonant-type" force $f$ and the class $\D$ contains ``not-so-nice'' domains $D\in \mathcal L$ (see Definition \ref{spazi}).  Hence,
we minimize the energy \eqref{energy-f}.

Throughout this section, we fix $\ell=\pi/150$ and $\sigma=0.2$ (two reasonable values for plates modeling the deck of a bridge, see \cite{befega}).
Moreover, we assume that $\tanh(\sqrt{2}m\ell)>\sigma^2/(2-\sigma)^2 \sqrt{2}m\ell$ so that $m\le2734$. Then, for any integer $m\le2734$, the first torsional eigenfunction $\overline e_m$ of $\Delta^2$ with the boundary conditions in \eqref{eq:Poisson_CROSS} having $m-1$ nodes in the $x$-direction and the corresponding eigenvalue $\nu_{m}$ are known; see \cite{fergaz}.
Notice that
$m^4<\nu_{m}<\big(m^2+\tfrac{\pi^2}{4\, \ell^2}\big)^2$.
A detailed analysis of the variation of all the eigenvalues under domain deformations was performed in \cite{bebuga}.
We aim to study the effect of a reinforcement $D$ when the force $f$ is at resonance, namely proportional to a torsional (odd) eigenfunction: we take
$f=\overline{e}_{m}(x,y)$. 
For these functions $f$ we then deal with problem \eqref{loadpb} and we seek the best shape of the reinforcement $D$ in order to lower the maximal gap $\GG_D^{\infty}$. We numerically study problem \eq{minimaxmax2} within classes of forces  (with $m$ from $1$ to $5$) and of reinforcements $\D$ of sets composed by two parallel strips, by triangles, by squares, and by hexagons as in Figure \ref{tri}:
\begin{equation}\label{FDn2}
{\mathcal F}=\{\overline{e}_{1},...,\overline{e}_{5}\}\quad \text{and} \quad  {\mathcal D}=\{\text{Strips, Triangles, Squares, Hexagons}\}\,.
\end{equation}

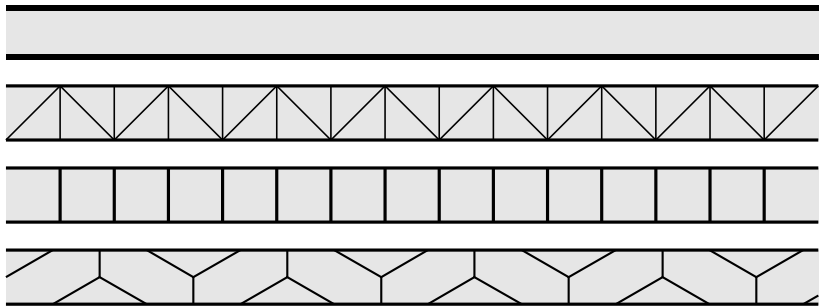
\begin{figure}[t]
\begin{center}

\begin{tikzpicture}[scale=0.36]
\fill[color=black!10!] (0,-1) rectangle (30,1);
\fill[color=black] (0,-1) rectangle (30,-0.8) ;
\fill[color=black] (0,0.8) rectangle (30,1);
\end{tikzpicture}

\par\bigskip\par

\begin{tikzpicture}[scale=0.36]

\fill[color=black!10!] (0,-1) rectangle (30,1);
\draw[very thick] (0,-1)-- (30,-1);
\draw[very thick] (0,1)-- (30,1);

\draw[semithick] (2,-1)-- (2,1);
\draw[semithick] (4,-1)-- (4,1);
\draw[semithick] (6,-1)-- (6,1);
\draw[semithick] (8,-1)-- (8,1);
\draw[semithick] (10,-1)-- (10,1);
\draw[semithick] (12,-1)-- (12,1);
\draw[semithick] (14,-1)-- (14,1);
\draw[semithick] (16,-1)-- (16,1);
\draw[semithick] (18,-1)-- (18,1);
\draw[semithick] (20,-1)-- (20,1);
\draw[semithick] (22,-1)-- (22,1);
\draw[semithick] (24,-1)-- (24,1);
\draw[semithick] (26,-1)-- (26,1);
\draw[semithick] (28,-1)-- (28,1);

\draw[semithick] (0,-1)-- (2,1);
\draw[semithick] (2,1)-- (4,-1);
\draw[semithick] (4,-1)-- (6,1);
\draw[semithick] (6,1)-- (8,-1);
\draw[semithick] (8,-1)-- (10,1);
\draw[semithick] (10,1)-- (12,-1);
\draw[semithick] (12,-1)-- (14,1);
\draw[semithick] (14,1)-- (16,-1);
\draw[semithick] (16,-1)-- (18,1);
\draw[semithick] (18,1)-- (20,-1);
\draw[semithick] (20,-1)-- (22,1);
\draw[semithick] (22,1)-- (24,-1);
\draw[semithick] (24,-1)-- (26,1);
\draw[semithick] (26,1)-- (28,-1);
\draw[semithick] (28,-1)-- (30,1);
\end{tikzpicture}

\par\bigskip\par

\begin{tikzpicture}[scale=0.36]

\fill[color=black!10!] (0,-1) rectangle (30,1);
\draw[very thick] (0,-1)-- (30,-1);
\draw[very thick] (0,1)-- (30,1);

\draw[very thick] (2,-1)-- (2,1);
\draw[very thick] (4,-1)-- (4,1);
\draw[very thick] (6,-1)-- (6,1);
\draw[very thick] (8,-1)-- (8,1);
\draw[very thick] (10,-1)-- (10,1);
\draw[very thick] (12,-1)-- (12,1);
\draw[very thick] (14,-1)-- (14,1);
\draw[very thick] (16,-1)-- (16,1);
\draw[very thick] (18,-1)-- (18,1);
\draw[very thick] (20,-1)-- (20,1);
\draw[very thick] (22,-1)-- (22,1);
\draw[very thick] (24,-1)-- (24,1);
\draw[very thick] (26,-1)-- (26,1);
\draw[very thick] (28,-1)-- (28,1);
\end{tikzpicture}

\par\bigskip\par

\begin{tikzpicture}[scale=0.36]

\fill[color=black!10!] (0,-1) rectangle (30,1);
\draw[very thick] (0,-1)-- (30,-1);
\draw[very thick] (0,1)-- (30,1);

\draw[thick] (0,0) -- (1.73,1);
\draw[thick] (1.73,-1) -- (3.46,0) -- (3.46,1);
\draw[thick] (3.46,0) -- (5.20, -1);
\draw[thick] (5.20,1) -- (6.92,0) -- (6.92,-1);
\draw[thick] (6.92,0) -- (8.66, 1);
\draw[thick] (8.66,-1) -- (10.39,0) -- (10.39,1);
\draw[thick] (10.39,0) -- (12.12, -1);
\draw[thick] (12.12,1) -- (13.86,0) -- (13.86,-1);
\draw[thick] (13.86,0) -- (15.58, 1);
\draw[thick] (15.58,-1) -- (17.3,0) -- (17.3,1);
\draw[thick] (17.3,0) -- (19.05, -1);
\draw[thick] (19.05,1) -- (20.78,0) -- (20.78,-1);
\draw[thick] (20.78,0) -- (22.52, 1);
\draw[thick] (22.52,-1) -- (24.25, 0) -- (24.25,1);
\draw[thick] (24.25,0) -- (25.98, -1);
\draw[thick] (25.98,1) -- (27.71,0) -- (27.71,-1);
\draw[thick] (27.71,0) -- (29.44, 1);
\draw[thick] (29.44,-1) -- (30,-0.68);
\end{tikzpicture}
\caption{Qualitative patterns of the trusses in $\D$.}\label{tri}
\end{center}
\end{figure}

The black lines are the thick stiffening trusses $D$ put below the plate and their
total area is constant. More precisely, the first plate is reinforced by two parallel trusses of width $X=\frac{1046\pi}{750^2}\approx0.00584$,
thereby having a global area of $2\pi X$. The three remaining shapes all have two parallel trusses of width $W=\frac \pi {750}$ along the free edges of
the plate for a total area of $2\pi W$, while the remaining area of
$2\pi(X-W)$ is distributed in connecting transverse trusses which generate some polygons all along the plate, see again Figure \ref{tri}. The triangular transverse truss is composed of $74$ vertical segments having length $2\ell-2W$ and $75$ oblique segments having length $(\pi/75 -2W)\sqrt{2}$, both having width $0.00287159$ (approximately). The squared transverse truss is composed of $74$ vertical segments having length $2\ell-2W$ and width $W$. Finally, the hexagonal transverse truss is composed of $17$ $Y$-shaped components, alternating upwards and downwards, complemented by two segments at the opposite ends of the plate (playing the role of the oblique branches of $Y$), whose measures are $\ell-W$ for the length of the vertical legs and $Z=0.0215211$ (approximated) for the width. These four designs, all belonging to the class $\mathcal L$ of Definition \ref{spazi}, have their own motivation.
The first one is the most natural, putting reinforcements only on the two free edges. The triangular truss is the most frequently used by engineers. The third one is also natural, putting the simplest transverse connections between the free edges. Finally, a truss composed of regular hexagons was shown to have better bending performances in \cite{hexagon} where the \lq \lq boundary effects" were neglected. In fact, what really counts is to have angles of size $2\pi/3$, as in \emph{irrigation} or \emph{traffic problems}, see \cite{but1,but2,but3}.
Let us also mention that it has been known since the 19th century that soap bubbles reach an equilibrium on flat surfaces when
the angles between three adjacent bubbles are always $2\pi/3$, see \cite{Th87}.
This angle has the peculiarity to \lq \lq optimize the distances'' and it is therefore interesting to measure its performance also in stiffening trusses.
The numerical values for the maximal gap are reported in Table \ref{table2}.
\begin{table}[h]
\begin{center}
\caption{Numerical values of $10^4\times \GG^{\infty}_{\overline e_{m},D}$ for the different polygonal reinforcements $D$ and resonant forces $\overline{e}_{m}$ (with $\ell=\pi/150$, $\sigma=0.2$, and $d=2$).}\label{table2}
\vspace{.1 in}
\begin{tabular}{|c|c|c|c|c|c|c|c|c|c|c|c|}
\hline
 \!&\! $\overline{e}_{1}$\!&\! $\overline{e}_{2}$\!&\! $\overline{e}_{3}$\!&\! $\overline{e}_{4}$ \!&\! $\overline{e}_{5}$ \\
\hline
$ \emptyset$\!&\! $43.629$ \!&\! 21.811\!&\! 14.537\!&\! 10.899 \!&\! 8.7147 \\
\hline
 Strips \!&\!$25.448$\!&\!$6.3602$\!&\!$2.8255$\!&\!$1.5883$\!&\! $1.0157$\\
\hline
Triangles \!&\!$29.363$\!&\!$7.2105$\!&\!$3.2643$\!&\!$1.8409$\!&\! $1.1855$\\
\hline
Squares \!&\!$27.946$\!&\!$6.9846$\!&\!$3.1028$\!&\!$1.7442$\!&\! $1.1154$\\
\hline
Hexagons \!&\!$28.875$\!&\!$7.1787$\!&\!$3.2007$\!&\!$1.7919$\!&\! $1.1304$\\
\hline
\end{tabular}
\end{center}
\end{table}

The class $\D$ that we have introduced here could be enlarged by considering also other geometries for $D$. Regarding the hexagonal design, we actually studied different positions of the intersections in the Y-shaped elements. The results contained in Table \ref{table2} are given for elements where the intersections occur on the midline of the plate, hence with the vertical branch having length $\ell$, while we performed computations also for cases where the vertical branch is longer or shorter than $\ell$. Even though one might expect the gap functions to be monotone or to have a unique minimum point (with respect to the length of the vertical branch), this does not occur, the behavior being very specific depending on the particular resonant force $\overline e_{m}$ considered. In some cases, the maximal gap exits the range we saw in Table \ref{table2}: for $\overline e_{1}$ and $\overline e_{3}$ the gap function is always bounded by that of the squares and that of the triangles, while for $\overline e_{2}$ the branch $b$ of length $b=2\ell/3$ produces a situation worse than the triangles, for $\overline e_{4}$ the cases $b=4\ell/3, 11\ell/8$ perform better than the squares. For $\overline e_{5}$ the case $b=23\ell/20$ performs worse than the triangles while the cases $b=4\ell/3, 17\ell/20$ are better than the squares.

\section{Proofs of the Existence Results}\label{sec8}


We first prove the continuity of the map defined in \eqref{funzionale}. We recall that in all the cases considered for the class $\F$, the weak* topology coincides with the weak topology, except when $\F=L^\infty(\Omega)$.

\begin{proposition}\label{trivial}
Let $\F$ be either $\hh(\Omega)$ (for $\mathbb{E}_1$) or $L^p(\Omega)$ with $p\in ]1,+\infty]$ (for both $\mathbb{E}_1$ and $\mathbb{E}_2$). Let also $\D$ be a class of open subdomains of $\Omega$ closed with respect to the $L^1$ topology. Then the map
$\GG^\infty_{f,D}\, :\, \F\times\D \to[0,\infty[$ with $(f,D)\mapsto \GG^\infty_{f,D}$
is sequentially continuous when $\F$ is endowed with the weak* topology and $\D$ is endowed with the $L^1$ topology.
\end{proposition}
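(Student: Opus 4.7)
The plan is to take sequences $f_n\rightharpoonup^* f$ in $\F$ and $D_n$ with $\chi_{D_n}\to\chi_D$ in $L^1(\Omega)$, and show that $\GG^\infty_{f_n,D_n}\to\GG^\infty_{f,D}$. The strategy is the standard continuous-dependence argument: first get uniform $H^2_*$ bounds on $u_{f_n,D_n}$, then pass to the limit in the Euler-Lagrange equation, and finally upgrade weak $H^2_*$ convergence to uniform convergence via the compact embedding $H^2(\Omega)\hookrightarrow\hookrightarrow C^0(\overline{\Omega})$ valid in dimension two.

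Since $f_n\rightharpoonup^* f$, the sequence $(f_n)$ is norm-bounded in $\F$ by Banach--Steinhaus. Testing the weak equation \eqref{weakD} for $\mathbb{E}_1$, or the analogous identity for $\mathbb{E}_2$, with $v=u_{f_n,D_n}$ and noting that the bilinear term $d(u_{f_n,D_n},u_{f_n,D_n})_{D_n}\ge 0$ (and that $|1+d\chi_{D_n}|\ge 1$ for $\mathbb{E}_2$), a uniform bound $\|u_{f_n,D_n}\|_{H^2_*}\le C$ follows. Up to a subsequence, $u_{f_n,D_n}\rightharpoonup u^\star$ in $H^2_*(\Omega)$ and, by the compact Sobolev embedding, $u_{f_n,D_n}\to u^\star$ strongly in $C^0(\overline{\Omega})$.

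The key step is identifying $u^\star=u_{f,D}$ by passing to the limit in the Euler-Lagrange equation. For a fixed $v\in H^2_*(\Omega)$ the linear terms $(u_{f_n,D_n},v)_{H^2_*}\to(u^\star,v)_{H^2_*}$ and $\langle f_n,v\rangle\to\langle f,v\rangle$ (respectively $\int f_n\tfrac{v}{1+d\chi_D}\to\int f\tfrac{v}{1+d\chi_D}$) are immediate from weak and weak* convergence. The nontrivial terms are those where $\chi_{D_n}$ appears. The observation that unlocks them is that, because $\chi_{D_n}\in\{0,1\}$, the $L^1$ convergence upgrades to convergence in every $L^q$, $q<\infty$; consequently, for any $w\in L^2(\Omega)$ one has $w\chi_{D_n}\to w\chi_D$ strongly in $L^2(\Omega)$ by dominated convergence. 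For $\mathbb{E}_1$, the bilinear term $(u_{f_n,D_n},v)_{D_n}$ splits as
\begin{equation*}
(u_{f_n,D_n},v)_{D_n}=(u_{f_n,D_n},v\chi_{D_n})_{\Omega}
=(u_{f_n,D_n}-u^\star,v\chi_D)_{\Omega}+(u_{f_n,D_n},v(\chi_{D_n}-\chi_D))_{\Omega}+(u^\star,v)_D,
\end{equation*}
the first piece vanishes by weak $H^2_*$ convergence against the fixed $L^2$ Hessian, the second by Cauchy--Schwarz and the just-stated strong $L^2$ convergence $v_{xy}\chi_{D_n}\to v_{xy}\chi_D$ (and similarly for the other second derivatives), and the third is the desired limit. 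The analogous argument for $\mathbb{E}_2$ uses $\left|\tfrac{1}{1+d\chi_{D_n}}-\tfrac{1}{1+d\chi_D}\right|\le d|\chi_{D_n}-\chi_D|$ to show $\tfrac{v}{1+d\chi_{D_n}}\to\tfrac{v}{1+d\chi_D}$ in $L^{p'}(\Omega)$, which pairs with the weak* convergence of $f_n$ in $L^p$.

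By uniqueness of the minimizer, $u^\star=u_{f,D}$, and a Urysohn-type argument forces the whole sequence to converge. The compact embedding then gives $u_{f_n,D_n}\to u_{f,D}$ uniformly on $\overline{\Omega}$; in particular the traces on the two free edges converge uniformly in $x\in[0,\pi]$, so $\GG_{f_n,D_n}\to\GG_{f,D}$ in $C^0([0,\pi])$ and hence $\GG^\infty_{f_n,D_n}\to\GG^\infty_{f,D}$. The main technical hurdle is the $(\cdot,\cdot)_{D_n}$ term in the $\mathbb{E}_1$ case: there one multiplies a merely weakly convergent Hessian by $\chi_{D_n}$, and the argument only works because $\chi_{D_n}$ is $\{0,1\}$-valued, which promotes $L^1$ convergence to strong convergence of $v\chi_{D_n}$ in $L^2$.
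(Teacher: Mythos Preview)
Your proof is correct and follows essentially the same route as the paper: uniform $H^2_*$ bounds via testing against the solution, weak subsequential limit, identification of the limit by passing to the limit in the Euler--Lagrange equation, and upgrade to uniform convergence via the compact embedding $H^2_*(\Omega)\hookrightarrow C^0(\overline\Omega)$. Your decomposition of the cross term $(u_{f_n,D_n},v)_{D_n}$ is algebraically the same as the paper's add--and--subtract of $d(u_n,v)_D$; in both cases the residual is controlled by $\|v\|_{H^2(D_n\triangle D)}\to 0$, which is exactly your ``$v_{xy}\chi_{D_n}\to v_{xy}\chi_D$ in $L^2$'' observation.

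Two minor points where your write-up is in fact more complete than the paper's: you treat the $\mathbb{E}_2$ case explicitly (the paper's proof only writes out \eqref{weakD}), and you invoke the Urysohn subsequence argument to conclude convergence of the full sequence, which the paper leaves implicit.
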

\begin{proof} Let $\{(f_n,D_n)\}_n \subset \F\times\D$ be such that $(f_n,D_n)\rightarrow (f,D)$ as $n\rightarrow+\infty$, hence $f_n\rightharpoonup^* f$ in $\F$ and $\chi_{D_n} \rightarrow \chi_{D} $ in $L^1$ as $n\rightarrow +\infty$. We denote by $u=u_{f,D}$ and $u_n=u_{f_n,D_n}$ the corresponding solutions of \eqref{weakD}. Recalling \eqref{scalarpD}, \eqref{weakD} with $f=f_n$ and $D=D_n$ reads
\neweq{subtraction}
(u_n,v)_{H^2_*}+d(u_n,v)_{D_n}= \langle f_n,v\rangle \qquad\forall v\in H^2_*(\Omega)\, .
\endeq
Since $f_n\rightharpoonup^* f$ in $\F$, the above equality with $v=u_n$ yields $\|u_n\|_{H^2_*}\leq C $ for some $C>0$. In particular, $u_n\rightharpoonup \bar u$ up to a subsequence in $H^2_*(\Omega)$ for some $\bar u\in H^2_*(\Omega)$.
Next, by adding and subtracting $d(u_n,v)_{D}$ in \eqref{subtraction}, we obtain that, for every $v\in H^2_*(\Omega)$
\neweq{subtraction2}
(u_n,v)_{H^2_*}+d(u_n,v)_{D}+d(u_n,v)_{D_n\setminus D}-d(u_n,v)_{D\setminus D_n}= \langle f_n,v\rangle \, .
\endeq
Since $\chi_{D_n} \rightarrow \chi_{D} $ in $L^1(\Omega)$ yields $|D_n\triangle D| \rightarrow 0$ as $n\rightarrow +\infty$, we deduce that
$|(u_n,v)_{D_n\setminus D}|\leq C \|v\|_{H^2(D_n\setminus D)}=o(1)$ { as } $n\rightarrow +\infty$
and similarly $(u_n,v)_{D \setminus D_n}=o(1)$.  By this, passing to the limit in \eqref{subtraction2}, we conclude that
$(\bar u,v)_{H^2_*}+d(\bar u,v)_{D}= \langle f,v\rangle$ for all $v\in H^2_*(\Omega)$;
hence $\bar u\equiv u$. Furthermore, from the compactness of the embedding $H^2_*(\Omega)\subset C^0(\overline{\Omega})$, we obtain $u_n\rightarrow u $ in $C^0(\overline{\Omega})$.
In terms of the gap functions, this means that $\GG_{f_n,D_n}(x)$ converges uniformly to $\GG_{f,D}(x)$ as $n\rightarrow +\infty$ over $[0,\pi]$. In particular, $\GG^\infty_{f_n,D_n} \rightarrow \GG^\infty_{f,D}$ as $n\rightarrow +\infty$. This concludes the proof.\end{proof}

\begin{proof}[Theorem \ref{optimal}]
Fix $D \subset \Omega$. If $\{f_n\}\subset \hh(\Omega)$ is a maximizing sequence for \eqref{functional}, since $\|f_n\|_{\hh}=1$, up to a subsequence, we have $f_n\rightharpoonup \overline f$  in $\hh(\Omega)$. By  Proposition \ref{trivial}, $\max \big \{  \GG^\infty_{f,D}: \text{$f\in \hh(\Omega)$, $\|f\|_{\hh}=1$}\big\}=\GG^\infty_{\overline f,D}$.
Moreover, it must be $\|\overline f\|_{\hh}=1$. Otherwise, if $\|\overline f\|_{\hh}<1$, set $\widetilde f={\overline f}/{\|\overline f\|_{\hh}}$ and by linearity we get $\GG^\infty_{\widetilde f, D}={\GG^\infty_{\overline f,D}}/{\|\overline f\|_{\hh}}>\GG^\infty_{\overline f,D}$,
a contradiction that proves the first part of Theorem \ref{optimal}.

Now, let $\{f_n\} \subset L^p(\Omega)$ be a maximizing sequence for \eqref{maxim} such that $\|f_n\|_{L^p}=1$. Up to a subsequence and for some $\overline f$, we have $f_n\rightharpoonup \overline f$ in $L^p(\Omega)$ if $1<p<\infty$ and $f_n\rightharpoonup ^* \overline f$ in $L^{\infty}(\Omega)$. In particular,  by lower semicontinuity of the norms with respect to these convergences, $\|\overline f\|_{L^p}\leq \|f_n\|_{L^p}=1$. Moreover, by Proposition \ref{trivial}, we have $\max \big \{  \GG^\infty_{f,D}:\, \text{$f\in L^p(\Omega)$ with $\|f\|_{L^p}=1$}\big\} =\GG^\infty_{\overline f,D}$.
Finally, the proof that $\|\overline f\|_{L^p}=1$ follows by arguing as above.
\end{proof}

\begin{proof}[Theorem \ref{optimaltruss}]
Using the Direct Method of the Calculus of Variations, it is sufficient to find a topology for which the functional
$D\mapsto\GG_D^\infty$ defined in \eqref{functional} is lower semicontinuous while the class of admissible sets $\D$ is compact. For this purpose we use the $L^1$-convergence of sets, namely the $L^1$-convergence of the characteristic functions associated to the sets. Indeed, by its definition \eqref{functional}
and the continuity proved in Proposition \ref{trivial}, it follows that the functional $\GG^\infty$ is lower-semicontinuous with respect to the
$L^1$-convergence of sets. Therefore, it remains to prove that the classes introduced in Definition \ref{spazi} are compact with respect to this
convergence: we do it for each class.

(a) Consider a sequence of crosses $\{D_n\}$ in $\mathcal C$:
by the Bolzano-Weierstrass Theorem the sequences of points $\{x_n^i\}$ and $\{y_n^i\}$ converge,  up to subsequences, to some $x^i\in  [\mu, \pi-\mu]$, $i=1,\dots, N$, and some $y^j\in[-\ell+\eps,\ell-\eps]$, $j=1,\dots,M$, respectively. By the Lebesgue Dominated Convergence Theorem, it turns out that $|D_n\triangle D|\to0$ as $n\to \infty$ where $D$ is the cross
\[
\Big(\bigcup_{i=1}^N (x^i-\mu,x^i+\mu\times ]-\ell,\ell[\Big)\cup\Big(\bigcup_{j=1}^M ]0,\pi[\times ]y^j-\varepsilon,y^j+\varepsilon[ \Big)\,;
\] this means that $\chi_{D_n}\to\chi_{D}$ in $L^1(\Omega)$ as $n\to \infty$. Moreover, $|D|=\kappa$, thanks to the area constraint. Therefore, the class $\mathcal C$ with area constraint is compact with
respect to the $L^1$-convergence of sets.

(b) To each rectangle $R\subset\Omega$ we associate its four vertices $V_1(R),\dots,V_4(R)$ in such a way that $V_1(R)$
is the upper-right vertex (i.e., the one with largest $y$-coordinate in the case such a vertex is unique, otherwise the one with largest $x$-coordinate) and the remaining $V_i(R)$ are ordered clockwise.
Consider a sequence of rectangles $\{R_n\}$ all having inradius at least $\varepsilon$:
by the Bolzano-Weierstrass Theorem, up to extracting a subsequence (that we do not relabel), the sequence of vertices $\{V_1(R_n)\}$ converges to some point $V_1\in\overline{\Omega}$. Up to extracting a further subsequence, the sequence of vertices $\{V_2(R_n)\}$ also converges to some point $V_2\in\overline{\Omega}$. Repeating this argument for the remaining vertices, we infer that each of the four sequences of vertices $\{V_i(R_n)\}$  converges, up to subsequences, to some point $V_i\in\overline{\Omega}$ (for $i=1,2,3,4$). Let $R$ be the open convex hull of the four points $V_1,\dots,V_4$; since, by construction, the scalar product of two consecutive sides is $(V_i(R_n)V_{i+1}(R_n),V_{i+1}(R_n)V_{i+2}(R_n))=0$ for $i=1,2,3,4$, where we set $V_5(R):=V_1(R)$ and $V_6(R):=V_2(R)$, passing to the limit as $n\to\infty$, and using the continuity of the scalar product it follows that $(V_iV_{i+1},V_{i+1}V_{i+2})=0$ (for $i=1,2,3,4$, where $V_5:=V_1$ and $V_6:=V_2$). Moreover, since the distance between two consecutive vertices of $R_n$ is larger than $2\varepsilon$ for all $n$, also the inradius of R is at least $\varepsilon$.  This implies that the set $R$ is an open rectangle having the distinct vertices $V_i(R)=V_i$ (for $i=1,2,3,4$). Moreover,  by the Lebesgue Dominated Convergence Theorem, it turns out that
$|R_n\triangle R|\to0$ as $n\to \infty$; this means that $\chi_{R_n}\to\chi_R$ in $L^1(\Omega)$ as $n\to \infty$.\par
Then take a sequence of sets $D_n\in\mathcal T$ with $D_n=\cup_{i=1}^N R_n^i$. Using the argument above, up to subsequences, we have that
$|R_n^i\triangle R^i|\to0$ as $n\to \infty$ for some rectangles $R^i$ all having inradius at least $\varepsilon$.
Hence, $\chi_{R_n^i}\to\chi_{R^i}$ in
$L^1(\Omega)$ for all $i=1,...,N$ and, in turn, $\chi_{D_n}\to\chi_{D}$ in $L^1(\Omega)$. The area constraint yields that $|D|=\kappa$.
Therefore, the class $\mathcal T$ with area constraint is compact with respect to the $L^1$-convergence of sets.

(c) Let $\Sigma_n$ be a sequence of closed connected sets with $\mathcal H^1(\Sigma_n)\leq L$. From the Blaschke Selection Theorem and the Go\l ab Theorem (see e.g. \cite[Theorem 4.4.17]{ambtil}), up to a subsequence we know that $\Sigma_n\to \Sigma$ with respect to the Hausdorff distance, where $\Sigma$ is a closed and connected set with $\mathcal H^1(\Sigma)\leq L$. Then the distance function to $\Sigma_n$ converges to the distance function to $\Sigma$ uniformly on $\overline \Omega$. This, with the fact that the Lebesgue measure of the set $\partial K^\e=\{x\in \Omega : \, \text{dist}_K=\e\}$ is zero, implies that $K_n^\e$ converges in $L^1$ to $K^\e$ (see \cite{hepi}).

(d) Using again \cite[Theorem 2.4.10]{hepi} we obtain the compactness with respect to the $L^1$ convergence of the space $\mathcal L$ with area constraint.
\end{proof}

\section{Proofs of the Symmetry Results}

\begin{proof}[Theorem \ref{fsymmetry}]\label{pp}
Let $f\in\hh(\Omega)$ be such that $\|f\|_{\hh}=1$ and consider the solution $u_f\in H^2_*(\Omega)$ of \eqref{weakD}. Since $D$ is symmetric,
following the decomposition \eqref{scomp} we may rewrite \eqref{weakD} as
\neweq{doppia}
(u_f^e,v^e)_{H^2_*}+(u_f^o,v^o)_{H^2_*}+d(u_f^e,v^e)_{D}+d(u_f^o,v^o)_{D}=\langle f^e,v^e\rangle+\langle f^o,v^o\rangle\, ,
\endeq
for all $v\in H^2_*(\Omega)$. Moreover, by \eqref{funzionale}, we have $\GG_{f,D}(x)=u^o_f(x,\ell)-u^o_f(x,-\ell)$ and also that $\GG^\infty_{f,D}=\max_{x\in[0,\pi]}\ \big|u^o_f(x,\ell)-u^o_f(x,-\ell)\big|$. In particular, if $f^o=0$ then $u^o=0$ and $\GG^\infty_{f,D}=0$ so that $f$ cannot be a maximizer for $\GG^\infty_{f,D}$.
Hence, by \eqref{normf}, there exists $0<\alpha\le1$ such that $\alpha=\|f^o\|_{\hh}\le\|f\|_{\hh}=1$. Consider now the problem
$(w,v)_{H^2_*}+d(w,v)_{D}=\frac{1}{\alpha}\langle f^o,v\rangle$ for all $v\in H^2_*(\Omega)$.
By linearity and by \eqref{doppia}, its solution is $w=u^o/\alpha$, then
$ \GG_{\frac{f^o}{\alpha},D}(x)=\frac{1}{\alpha}\GG_{f,D}(x)$ and  $\GG^\infty_{\frac{f^o}{\alpha},D}=\frac{1}{\alpha}\GG^\infty_{f,D}\ge\GG^\infty_{f,D}\, .$

Hence, we have shown that for all $f\in\hh(\Omega)$ such that $\|f\|_{\hh}=1$, there exists $g\in\hho$ such that $\|g\|_{\hh}=1$ ($g=f^o/\alpha$) and
$\GG^\infty_{g,D}\ge \GG^\infty_{f,D}$. This proves the first part of Theorem \ref{fsymmetry}.\par\smallskip

The remaining part of Theorem \ref{fsymmetry} follows the inverse path. Let $f$ be as in the statement and take any
$\phi\in\hhe$ such that $\|\phi\|_{\hh}\le1$. Then, put $g=f+\phi$ so that $g^o=f$ and $g^e=\phi$. By \eqref{normf} we have $\|g\|_{\hh}=1$.
By slightly modifying the arguments above we see that $\GG^\infty_{g,D}=\GG^\infty_{f,D}$.
\end{proof}

For the proof of Theorem \ref{fsymmetry2} we need the following result.

\begin{lemma}\label{inequality}
Let $1\le p\le\infty$ and $a>0$. If  $\phi\in L^p(]-a,a[)$ then
\neweq{large}
\|\phi^o\|_{L^p}\le\|\phi\|_{L^p}\, .
\endeq
Moreover:\par
 -- if $p=1$ then the inequality in \eqref{large} is strict if and only if $|\phi^o(x)|<|\phi^e(x)|$ in a subset of $]-a,a[$ of positive measure;\par
 -- if $1< p <\infty$ then the inequality in \eqref{large} is strict if and only if $\phi$ is not odd ($\phi\not\equiv\phi^o$);\par
 -- if $p=\infty$ then the inequality in \eqref{large} is strict if and only if for any $\{x_n\}\subset ]-a,a[$ such that $|\phi(x_n)|\to\|\phi\|_{L^\infty}$,
one has $\liminf_n|\phi^e(x_n)|>0$; in particular, if $\phi\in C^0[-a,a]$, then the inequality is strict if and only if $\phi^e(\overline{x})\neq0$
in every point $\overline{x}$ where $|\phi|$ attains its maximum.
\end{lemma}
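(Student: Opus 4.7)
The plan is to rewrite $\phi = \phi^e + \phi^o$ together with the identity $\phi(-x) = \phi^e(x) - \phi^o(x)$, so that
\[
\phi^o(x) = \tfrac{\phi(x) - \phi(-x)}{2}, \qquad \phi^e(x) = \tfrac{\phi(x) + \phi(-x)}{2},
\]
and then to obtain \eqref{large} from the pointwise estimate
\[
|\phi^o(x)|^p \;\le\; \Bigl(\tfrac{|\phi(x)| + |\phi(-x)|}{2}\Bigr)^p \;\le\; \tfrac{|\phi(x)|^p + |\phi(-x)|^p}{2} \qquad (1 \le p < \infty),
\]
which combines the triangle inequality with convexity of $t \mapsto t^p$; integrating over $]-a,a[$ and applying the change of variable $x \mapsto -x$ to the second term yields $\|\phi^o\|_p \le \|\phi\|_p$. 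The case $p = \infty$ is immediate from the same chain of pointwise bounds.

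For $1 < p < \infty$ the characterization of strict inequality follows by backtracking through these two bounds: strict convexity of $t \mapsto t^p$ forces the second inequality to be an equality only when $|\phi(x)| = |\phi(-x)|$ a.e., while equality in the first forces $\phi(x)$ and $-\phi(-x)$ to share sign; together these give $\phi(x) = -\phi(-x)$ a.e., i.e., $\phi$ is odd. For $p = 1$ I would instead invoke the elementary identity $|a+b| + |a-b| = 2\max(|a|,|b|)$ with $a = \phi^e(x)$ and $b = \phi^o(x)$, which rewrites $|\phi(x)| + |\phi(-x)| = 2\max(|\phi^e(x)|,|\phi^o(x)|)$; folding the integrals over $]-a,0[$ onto $]0,a[$ then produces
\[
\|\phi\|_1 - \|\phi^o\|_1 \;=\; 2\int_0^a \bigl(|\phi^e(x)| - |\phi^o(x)|\bigr)^+\, dx,
\]
whose vanishing is equivalent to $|\phi^o| \ge |\phi^e|$ a.e.

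The main obstacle is the case $p = \infty$, because the essential supremum is typically not attained and one must argue via approximating sequences. If $\|\phi^o\|_\infty = \|\phi\|_\infty$, I pick $\{y_n\}$ with $|\phi^o(y_n)| \to \|\phi\|_\infty$; from $|\phi^o(y_n)| \le (|\phi(y_n)| + |\phi(-y_n)|)/2 \le \|\phi\|_\infty$ both terms on the right must tend to $\|\phi\|_\infty$, and the signs of $\phi(y_n)$ and $-\phi(-y_n)$ must eventually agree, forcing $|\phi^e(y_n)| = |\phi(y_n) + \phi(-y_n)|/2 \to 0$. Conversely, a sequence $\{x_n\}$ with $|\phi(x_n)| \to \|\phi\|_\infty$ and $\liminf_n |\phi^e(x_n)| = 0$ produces, via $|\phi^o(x_n)| \ge |\phi(x_n)| - |\phi^e(x_n)|$, the reverse estimate $\|\phi^o\|_\infty \ge \|\phi\|_\infty$. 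The continuous case is then immediate: any maximum point $\overline{x}$ of $|\phi|$ at which $\phi^e(\overline{x}) = 0$ yields such a sequence (take $x_n \equiv \overline{x}$).
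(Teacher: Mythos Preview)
Your argument is correct and parallels the paper's: both derive \eqref{large} from the Minkowski/triangle inequality applied to $\phi^o(x)=\tfrac{\phi(x)-\phi(-x)}{2}$ and then analyze the equality cases separately in each regime, with the $p=\infty$ case handled by the same sequence argument. The only noteworthy variation is at $p=1$: the paper observes that equality in the triangle inequality forces $\phi(x)\phi(-x)=\phi^e(x)^2-\phi^o(x)^2\le 0$ a.e., whereas you use the identity $|a+b|+|a-b|=2\max(|a|,|b|)$ to obtain the explicit gap formula $\|\phi\|_1-\|\phi^o\|_1=2\int_0^a(|\phi^e|-|\phi^o|)^+\,dx$, which is a slightly cleaner route to the same characterization.
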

\begin{proof} Since $\phi^o(x)=\tfrac{\phi(x)-\phi(-x)}{2}$,
the inequality \eqref{large} follows from the Minkowski inequality and the symmetry of $]-a,a[$.\par
If $p=1$, then the Minkowski inequality, just used to obtain \eqref{large}, reads
$$\int_{-a}^a|\phi^o(x)|\, dx=\frac12\int_{-a}^a|\phi(x)-\phi(-x)|\, dx\le\int_{-a}^a\tfrac{|\phi(x)|+|\phi(-x)|}2 \, dx\!=\!\int_{-a}^a|\phi(x)|\, dx$$
so that it reduces to an equality if and only if
$$0 \ge \phi(x)\phi(-x)=\Big[\phi^e(x)+\phi^o(x)\Big]\Big[\phi^e(-x)+\phi^o(-x)\Big]=\phi^e(x)^2-\phi^o(x)^2$$
for a.e. $x\in ]-a,a[$.
This means that $|\phi^e(x)|\le|\phi^o(x)|$ for a.e.\ $x\in ]-a,a[$. Since this is a necessary and sufficient condition, the statement for $p=1$ is proved.\par
If $p\in ]1,+\infty[$, the Minkowski inequality is itself obtained via an application of H\"older's inequality and equality holds if and only if the
two involved functions are multiples of each other. In the present situation, this means that $\phi(x)=\alpha\phi(-x)$ for some $\alpha=\alpha(p)<0$ and
for a.e.\ $x\in ]-a,a[$. The only possibility is that $\alpha=-1$, which means that $\phi=\phi^o$ and $\phi^e\equiv0$.
Since this is a necessary and sufficient condition, also the statement for $p>1$ is proved.\par
If $p=\infty$, we claim that equality holds in \eq{large} if and only if there exists $\{x_n\}\subset]-a,a[$ such that $|\phi(x_n)|\to\|\phi\|_{L^\infty}$
and $\phi^e(x_n)\to0$. Indeed, if such a sequence exists, then $|\phi^o(x_n)|=|\phi(x_n)-\phi^e(x_n)|\to\|\phi\|_{L^\infty}$ which proves that
$\|\phi^o\|_{L^\infty}=\|\phi\|_{L^\infty}$. Conversely, if equality holds then there exists $\{x_n\}\subset]-a,a[$ such that
$\phi^o(x_n)\to\|\phi\|_{L^\infty}$. This yields
$\phi(x_n)-\phi^e(x_n)=\phi^o(x_n)\to\|\phi\|_{L^\infty}$ and $\phi^e(x_n)-\phi(-x_n)=-\phi^o(-x_n)=\phi^o(x_n)\to\|\phi\|_{L^\infty}$,
which proves that $\phi^e(x_n)\to0$ since otherwise one of $|\phi(\pm x_n)|$ would tend to exceed $\|\phi\|_{L^\infty}$. The claim is so proved and therefore
the strict inequality occurs in the opposite situation: this proves the first statement.\par
In the case, where $\phi\in C^0([-a,a])$, the sequences just used to prove the statement may be replaced by their limits.
\end{proof}

\begin{proof}[Theorem \ref{fsymmetry2}] For every $p\in]1,\infty]$, \eqref{large} combined with the argument in the proof of Theorem \ref{fsymmetry} yields that a maximizer $f$ can be sought as an odd function.\par
If $1<p<\infty$, by contradiction, let $f\in L^{p}(\Omega)$ such that $\|f\|_{L^{p}}=1$ be a non-odd maximizer. Since $f\not\equiv f^o$, by Lemma \ref{inequality} $\|f^o\|_{L^p}<\|f\|_{L^p}=1$. Take now $\overline f={f^o}/{\|f^o\|_{L^p}}$,  recalling that $f^e$ plays no role in the value of the gap function, we obtain $\|\overline f\|_{L^{p}}=1$ and $\GG^\infty_{\overline f,D}=\GG^\infty_{f,D}/{\|f^o\|_{L^p}}>\GG^\infty_{f,D}$, a contradiction.\par
If $p=\infty$, take an odd function $f\in L^{\infty}(\Omega)$ such that $\|f\|_{L^{\infty}}=1$ and
$\GG^\infty_{f,D}=\max\Big\{\GG^\infty_{\phi,D}:\, \text{$\phi\in L^{\infty}(\Omega)$ with $\|\phi\|_{L^{\infty}}=1$}\Big\}$.
If $|f(x,y)|<1$ on a subset $\omega\subset\Omega$ of positive measure, take any even function $h$ such that $h(x,y)\equiv0$ in $\Omega\setminus\omega$
and $|h(x,y)|<1-|f(x,y)|$ in $\omega$. Then, $g=f+h$ is not odd and satisfies $\|g\|_{L^\infty}=1$, $g^e=h\neq0$, and
$\GG^\infty_{g,D}=\GG^\infty_{f,D}$ (by linearity since $\GG^\infty_{h,D}=0$).
\end{proof}

\section{Proofs of Theorems \ref{gap_limit_swiss} and \ref{examples}}\label{expl sol}

We prove Theorem \ref{gap_limit_swiss} and Theorem \ref{examples} in several steps.
Let $g\in L^2(]0,\pi[)$, $\alpha\ge0$ with $\alpha\not\in\mathbb{N}$ and
\neweq{f2}
k_\alpha(x,y)=K_\alpha e^{\alpha y}g(x),\quad
K_\alpha:=\frac{\alpha}{2C_g\sinh(\alpha \ell)}\text{ and } C_g:=\int_0^{\pi}|g(x)|\,dx\,,
\end{equation}
so that $\|k_\alpha\|_{L^1}=1$. Let $h_\alpha:=\frac{k_\alpha}{1+d\chi_{D^N}}$; we first focus on the \emph{auxiliary problem}
\begin{equation}\label{eq:Poisson_CROSS}
\begin{cases}
\Delta^2 w=h_\alpha\,, & \qquad \text{in } \Omega\,, \\
w=w_{xx}=0\,, & \qquad \text{on } \{0,\pi\}\times ]-\ell,\ell[\,, \\
w_{yy}+\sigma w_{xx}=w_{yyy}+(2-\sigma)w_{xxy}=0\,, & \qquad \text{on } ]0,\pi[\times \{-\ell,\ell\}\,.\\
\end{cases}
\end{equation}
Indeed, if $w_\alpha$ solves \eq{eq:Poisson_CROSS}, then a multiple of its odd part
$$u_\alpha(x,y):=\frac{R_\alpha}{K_\alpha}w^o_\alpha(x,y)=\frac{\sinh(\alpha \ell)}{2(\cosh(\alpha \ell)-1)}\,(w_\alpha(x,y)-w_\alpha(x,-y))$$
solves problem \eqref{loadpb} with $f=f_\alpha$ and $D=D^N$. Moreover, if $\GG_\alpha(x)$ is the gap function corresponding to $w_\alpha$, then $\tfrac{R_\alpha}{K_\alpha}\GG_\alpha(x)$ is the gap function corresponding to $u_\alpha$.
Therefore, since $\frac{R_\alpha}{K_\alpha} =1+2e^{-\alpha\ell}+o(e^{-\alpha\ell})$ as $\alpha \rightarrow + \infty$,
the limit of the gap function corresponding to $u_\alpha$ and the asymptotic behavior of the corresponding coefficients are exactly the same as those for $w_\alpha$.

Now, we focus on the explicit solution of the auxiliary problem.
We expand $g\in L^2(]0,\pi[)$ in a Fourier series
\neweq{serieg}
g(x)=\serie\secondo_m\smx\, ,\qquad\secondo_m=\frac{2}{\pi}\ipi g(x)\smx\, dx\,.
\endeq
Then, if we set
\neweq{INN}
I^N:=\bigcup_{ i=1}^{2N+1}
\left(\frac{\pi i}{2N+2}-\frac{\mu}{2N+1},\frac{\pi i}{2N+2}+\frac{\mu}{2N+1}\right)\,,
\endeq
for every $x\in ]0,\pi[$ and every $y\in ]-\ell,-\varepsilon[\cup ]\varepsilon,\ell[$, we have
\neweq{serietilde}
\frac{g(x)}{1+d\chi_{D^N}(x,y)}=\frac{g(x)}{1+d\chi_{I^N}(x)}=\serie\primo_m\smx\,,
\endeq
where the coefficients $\primo_m$ are as defined in \eqref{gtilde},
while if we set $\widehat\gamma_m=\frac{\secondo_m}{1+d}$, we have
$\frac{g(x)}{1+d\chi_{D^N}(x,y)}=\frac{g(x)}{ 1+d}=\serie\widehat{\gamma}_m\smx$ for all $x\in ]0,\pi[$ and $y\in ]-\varepsilon,\varepsilon[$.

In the sequel, we will need the following constants (only depending on $m$, $\alpha$ and $\varepsilon$):
\small{
\begin{eqnarray}\label{Fi}
\notag & F_1(\eps)  :=  \tfrac{\alpha(\alpha^2-3m^2)\sinh(m\varepsilon) +2m^3\cosh(m\varepsilon) }{2m^3}+(\alpha^2-m^2)\tfrac{m\sinh(m\varepsilon)-\alpha\cosh(m\varepsilon)}{2m^2}\varepsilon\,,\\\notag
& F_2(\eps)  :=  -\tfrac{\alpha(\alpha^2-3m^2)\cosh(m\varepsilon)+2m^3\sinh(m\varepsilon)}{2m^3} -(\alpha^2-m^2)\tfrac{m\cosh(m\varepsilon)-\alpha\sinh(m\varepsilon)}{2m^2}\varepsilon \,,\\\notag
& F_3(\eps)  :=  (\alpha^2-m^2)\tfrac{\alpha\cosh(m\varepsilon)-m\sinh(m\varepsilon)}{2m^2}  \,,\\
& F_4(\eps)  :=  -(\alpha^2-m^2)\tfrac{\alpha\sinh(m\varepsilon)-m\cosh(m\varepsilon)}{2m^2}\,,
\end{eqnarray}
\begin{equation}\label{Fpm}
\begin{split}
F_1^{\pm}(\eps)&:=F_1(\eps)\pm e^{-2\alpha\varepsilon}F_1(-\eps)\,, \qquad F_2^{\pm}(\eps):=F_2(\eps)\pm e^{-2\alpha\varepsilon}F_2(-\eps)\,,\\
F_3^{\pm}(\eps)&:=F_3(\eps)\pm e^{-2\alpha\varepsilon}F_3(-\eps)\,, \qquad F_4^{\pm}(\eps):=F_4(\eps)\pm e^{-2\alpha\varepsilon}F_4(-\eps)\,,
\end{split}
\end{equation}
\begin{equation}\label{a}
a=a(m,\alpha,\eps):=K_\alpha e^{\alpha\varepsilon}\frac{\primo_m-\widehat\gamma_m}{(m^2-\alpha^2)^2}\,,
\end{equation}
\begin{eqnarray}\label{G}
\notag &G_1:=-\tfrac{a}{2}\big\{(1\!-\!\sigma)m^2\cosh(m\ell)\, F_1^+(\eps)+ m[2\cosh(m\ell)+(1\!-\!\sigma)m\ell\sinh(m\ell)]\, F_4^+(\eps)\\
\notag & + (1\!-\!\sigma)m^2\sinh(m\ell)\, F_2^-(\eps)+  m[2\sinh(m\ell)+(1\!-\!\sigma)m\ell\cosh(m\ell)]\, F_3^-(\eps)\big\}\,\\
\notag  & G_2  :=\tfrac{a}{2}\big\{(1\!-\!\sigma)m^3\cosh(m\ell)\,F_2^-(\eps) - m^2[(1\!+\!\sigma)\cosh(m\ell)-(1\!-\!\sigma)m\ell\sinh(m\ell)]\, F_3^-(\eps)\\
 \notag & +(1\!-\!\sigma)m^3\sinh(m\ell)\, F_1^+(\eps)-  m^2[(1\!+\!\sigma)\sinh(m\ell)-(1\!-\!\sigma)m\ell\cosh(m\ell)]\, F_4^+(\eps)\big\}\, ,\\
\notag  & G_3 :=-\tfrac{a}{2}\big\{(1\!-\!\sigma)m^2\cosh(m\ell)\, F_1^-(\eps)+ m[2\cosh(m\ell)+(1\!-\!\sigma)m\ell\sinh(m\ell)]\, F_4^-(\eps)\\
\notag  &+ (1\!-\!\sigma)m^2\sinh(m\ell)\, F_2^+(\eps)+  m[2\sinh(m\ell)+(1\!-\!\sigma)m\ell\cosh(m\ell)]\, F_3^+(\eps)\big\}\, ,\\
\notag  &G_4 :=\tfrac{a}{2}\big\{(1\!-\!\sigma)m^3\cosh(m\ell)\,F_2^+(\eps)- m^2[(1\!+\!\sigma)\cosh(m\ell)-(1\!-\!\sigma)m\ell\sinh(m\ell)]\, F_3^+(\eps)\\
& +(1\!-\!\sigma)m^3\sinh(m\ell)\, F_1^-(\eps)-  m^2[(1\!+\!\sigma)\sinh(m\ell)-(1\!-\!\sigma)m\ell\cosh(m\ell)]\, F_4^-(\eps)\big\}\,.
\end{eqnarray}
}

\normalsize
Then, we set
\small{
\begin{eqnarray}\label{A2B2C2D2}
\notag &C_2:=\frac{m\cosh(m\ell)\left(K_\alpha\primo_m \frac{\sigma m^2\!-\!\alpha^2}{(m^2\!-\!\alpha^2)^2} \sinh(\alpha\ell)+G_3\right)+\sinh(m\ell)\left(\alpha K_\alpha \primo_m \frac{(2\!-\!\sigma)m^2\!-\!\alpha^2}{(m^2\!-\!\alpha^2)^2} \cosh(\alpha\ell)+G_4\right)}{m^2\left[(3+\sigma)\sinh(m\ell)\cosh(m\ell)+(1-\sigma)m\ell\right]}\,,\\
\notag &D_2:=\frac{m\sinh(m\ell)\left(K_\alpha\primo_m \frac{\sigma m^2\!-\!\alpha^2}{(m^2\!-\!\alpha^2)^2} \cosh(\alpha\ell)+G_1\right)+\cosh(m\ell)\left(\alpha K_\alpha \primo_m \frac{(2\!-\!\sigma)m^2\!-\!\alpha^2}{(m^2\!-\!\alpha^2)^2} \sinh(\alpha\ell)+G_2\right)}{m^2\left[ (3+\sigma)\sinh(m\ell)\cosh(m\ell)-(1-\sigma)m\ell\right]}\,,\\
\notag  &A_2:=\frac{D_2 m^2[(1+\sigma)\sinh(m\ell)-(1-\sigma)m\ell\cosh(m\ell)]-\alpha K_\alpha \primo_m \frac{(2\!-\!\sigma)m^2\!-\!\alpha^2}{(m^2\!-\!\alpha^2)^2} \sinh(\alpha\ell)-G_2}{(1\!-\!\sigma)m^3\sinh(m\ell)}\,, \\
&B_2:=\frac{C_2m^2[(1\!+\!\sigma)\cosh(m\ell)-(1\!-\!\sigma)m\ell\sinh(m\ell)]-\alpha K_\alpha \primo_m \frac{(2\!-\!\sigma)m^2\!-\!\alpha^2}{(m^2\!-\!\alpha^2)^2} \cosh(\alpha\ell)-G_4 }{(1\!-\!\sigma)m^3\cosh(m\ell)}\,,
\end{eqnarray}}
\normalsize
and
{\small
\begin{equation}\label{A1D3}
\begin{split}
A_1&:=A_2+aF_1(\eps)\,, \quad A_3:=A_2+a e^{-2\alpha\varepsilon} F_1(-\eps)\,, \\
B_1&:=B_2+aF_2(\eps) \,, \quad B_3:=B_2+a e^{-2\alpha\varepsilon}F_2(-\eps)\,,\\
C_1&:=C_2+aF_3(\eps)\,, \quad C_3:=C_2+a e^{-2\alpha\varepsilon}F_3(-\eps)\,, \\
D_1&:=D_2+aF_4(\eps)\,, \quad D_3:=D_2+a e^{-2\alpha\varepsilon}F_4(-\eps)\,.
\end{split}
\end{equation}}

\normalsize
The following statement allows us to determine the explicit solution of \eq{eq:Poisson_CROSS}.

\begin{proposition}\label{exponentialcross}
Assume that $g\in L^2(]0,\pi[)$ satisfies \eqref{serieg}. For $\alpha\ge0$ with $\alpha\not\in\mathbb{N}$, let $K_\alpha$ be as in \eqref{f2}. Then, the unique solution
of \eqref{eq:Poisson_CROSS} is given by
$$w_{\alpha}(x,y)=\begin{cases}
 w_1(x,y)\,, & \qquad \text{in } ]0,\pi[\times ]\varepsilon,\ell[\,, \\
  w_2(x,y)\,, & \qquad \text{in } ]0,\pi[\times ]-\varepsilon,\varepsilon[\,, \\
   w_3(x,y)\,, & \qquad \text{in } ]0,\pi[\times ]-\ell,-\varepsilon[ \,.
\end{cases}
$$
with, for $i=1,\dots, 3$,
\begin{equation*}
\begin{split}
w_i(x,y)\!:=&\!\!\serie \!\! \big((A_i+C_iy)\cosh(my)\!+\!(B_i+D_iy)\sinh(my)\!+\!\tfrac{e^{\alpha y}K_\alpha\gamma_m^i}{(m^2-\alpha^2)^2}\big)\!\smx\\
\end{split}
\end{equation*}
and the constants $A_i=A_i(m,\alpha,\eps)$, $B_i=B_i(m,\alpha,\eps)$, $C_i=C_i(m,\alpha,\eps)$, and $D_i=D_i(m,\alpha,\eps)$ as defined in \eqref{A2B2C2D2} and \eqref{A1D3}, while $\gamma_m^1=\gamma_m^3=\gamma_m$ and $\gamma_m^2=\widehat \gamma_m$.
\end{proposition}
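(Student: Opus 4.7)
The natural strategy is separation of variables in $x$. Since the boundary conditions $w=w_{xx}=0$ on $\{0,\pi\}\times]-\ell,\ell[$ are satisfied by each mode $\sin(mx)$, and since both $k_\alpha(x,y)=K_\alpha e^{\alpha y}g(x)$ and the multiplier $\tfrac{1}{1+d\chi_{D^N}(x,y)}$ are separable within each horizontal strip (the $x$-dependence is packaged in the Fourier coefficients $\gamma_m^i$ recorded before the statement), I would expand
\[
w_\alpha(x,y)=\serie w_m(y)\, \smx,
\]
so that the biharmonic equation $\Delta^2 w=h_\alpha$ decouples into, for each $m\in\mathbb{N}$ and each strip $i\in\{1,2,3\}$, the ODE
\[
w_m''''(y)-2m^2w_m''(y)+m^4w_m(y)=K_\alpha\,\gamma_m^i\, e^{\alpha y},
\]
with $\gamma_m^1=\gamma_m^3=\gamma_m$ on the outer strips and $\gamma_m^2=\widehat\gamma_m$ on the middle strip.

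Next, I would solve each ODE explicitly. The homogeneous equation has characteristic polynomial $(r^2-m^2)^2$, so the general solution in strip $i$ is $(A_i+C_iy)\cosh(my)+(B_i+D_iy)\sinh(my)$. Because $\alpha\notin\mathbb{N}$ we have $\alpha\neq m$ for every $m$, so a particular solution is the expected $\tfrac{K_\alpha\gamma_m^i}{(m^2-\alpha^2)^2}\, e^{\alpha y}$. This produces the candidate form $w_i(x,y)$ stated in the proposition with twelve scalar unknowns $A_i,B_i,C_i,D_i$ per mode.

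To pin down these twelve constants I would impose the four natural boundary conditions at $y=\pm\ell$ and the eight matching conditions at $y=\pm\varepsilon$ (continuity of $w$, $w_y$, $w_{yy}$, $w_{yyy}$ across each horizontal interface, which is the correct transmission for $\Delta^2$ with an $L^\infty$ jump in the right-hand side). The boundary conditions $w_{yy}+\sigma w_{xx}=0$ and $w_{yyy}+(2-\sigma)w_{xxy}=0$ at $y=\pm\ell$, written modewise, give two equations per sign of $\ell$ involving only the outer strips, and it is precisely their solution (after eliminating the jump data from the matching) that yields the compact formulas \eqref{A2B2C2D2} for $A_2,B_2,C_2,D_2$. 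Then the matching step becomes the identification $A_1=A_2+aF_1(\varepsilon)$, etc., of \eqref{A1D3}: the coefficient $a$ in \eqref{a} is essentially the jump of the particular solution across $y=\pm\varepsilon$ divided by $(m^2-\alpha^2)^2$, and the four functions $F_1,\dots,F_4$ in \eqref{Fi} collect the corrections coming from continuity of $w$, $w_y$, $w_{yy}$, $w_{yyy}$ respectively. The symmetry $F_j^\pm(\varepsilon)$ in \eqref{Fpm} encodes the even/odd contributions of the upper and lower interfaces, and the auxiliary quantities $G_1,\dots,G_4$ in \eqref{G} are then exactly the combinations that arise when these corrections are pushed into the natural boundary conditions at $y=\pm\ell$.

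The main obstacle is organizational rather than conceptual: one must solve a $12\times12$ linear system (for each $m$) whose entries are hyperbolic functions of $m\ell$, $m\varepsilon$, $\alpha\ell$, $\alpha\varepsilon$. The cleanest path is to first eliminate $A_1,B_1,C_1,D_1,A_3,B_3,C_3,D_3$ via the matching identities \eqref{A1D3}, which reduces the system to $4\times4$ in $A_2,B_2,C_2,D_2$ with the free boundary conditions at $y=\pm\ell$; solving this $4\times4$ block produces the denominators $(3+\sigma)\sinh(m\ell)\cosh(m\ell)\pm(1-\sigma)m\ell$ visible in \eqref{A2B2C2D2}. Uniqueness of the whole procedure follows from the invertibility of this $4\times 4$ block (equivalent to the coercivity of the unperturbed bilinear form, already guaranteed in $H^2_*(\Omega)$ since $0<\sigma<1$), so a verification \emph{a posteriori} that the explicit $w_\alpha$ solves \eqref{eq:Poisson_CROSS} — by checking the ODE in each strip, the matching, and the two boundary conditions at $y=\pm\ell$ — gives an independent closure of the proof without having to reproduce the algebra symbolically.
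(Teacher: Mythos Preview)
Your outline is correct and follows essentially the same route as the paper: Fourier expansion in $x$, solution of the resulting fourth-order ODE $(w_m''''-2m^2w_m''+m^4w_m=K_\alpha\gamma_m^i e^{\alpha y})$ on each of the three strips, and determination of the twelve constants per mode from the four free-edge conditions at $y=\pm\ell$ together with the eight $C^3$ junction conditions at $y=\pm\varepsilon$. The paper's only organizational difference is that it subtracts the particular solution at the PDE level (writing $v_i=w_i-K_\alpha e^{\alpha y}\phi(x)$ with $\phi$ solving a one-dimensional fourth-order problem) rather than at the ODE level, which is of course equivalent.

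Two small points you gloss over that the paper makes explicit. First, the reduced $4\times4$ system in $A_2,B_2,C_2,D_2$ actually decouples into two $2\times2$ blocks (one in $(A_2,D_2)$, one in $(B_2,C_2)$), obtained by taking sums and differences of the boundary relations at $y=\ell$ and $y=-\ell$; this is how the closed formulas \eqref{A2B2C2D2} drop out without inverting a full $4\times4$ matrix. Second, your ``verification \emph{a posteriori}'' presupposes that the Fourier series for $w_\alpha$ converges well enough to differentiate termwise; the paper dedicates a separate lemma to showing uniform convergence in $\overline\Omega$ up to second derivatives via careful asymptotics of $A_i\pm B_i$ and $C_i\pm D_i$ as $m\to\infty$, and this step is needed to conclude that the formal candidate is the genuine $H^2_*$ solution.
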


\begin{proof}
In order to solve the problem, we split the domain $\Omega$ into three rectangles:
$R_1:=]0,\pi[\times ]\varepsilon,\ell[$, $R_2:=]0,\pi[\times ]-\varepsilon,\varepsilon[$,
$R_3:=]0,\pi[\times]-\ell,-\varepsilon[$,
so that we obtain
\begin{equation} \label{eq:Poisson1}
\small
\begin{cases}
\Delta^2 w_1=K_\alpha e^{\alpha y}g(x)(1+d\chi_{I^N}(x))^{-1}\,,  &\qquad  \text{in } R_1\,, \\
w_1=(w_1)_{xx}=0\,,  &\qquad\text{on } \{0,\pi\}\times ]\varepsilon,\ell[\,, \\
(w_1)_{yy}+\sigma (w_1)_{xx}=(w_1)_{yyy}+(2-\sigma)(w_1)_{xxy}=0\,,  & \qquad\text{on } ]0,\pi[\times \{\ell\} \, ,\\
\end{cases}
\end{equation}
\begin{equation} \label{eq:Poisson2}
\small
\begin{cases}
\Delta^2 w_2=K_\alpha e^{\alpha y}g(x)(1+d)^{-1}\,, & \qquad \text{in } R_2\,, \\
w_2=(w_2)_{xx}=0\,,  &\qquad \text{on } \{0,\pi\}\times ]-\varepsilon,\varepsilon[ \, , \\
\end{cases}
\end{equation}
\begin{equation} \label{eq:Poisson3}
\small
\begin{cases}
\Delta^2 w_3=K_\alpha e^{\alpha y}g(x)(1+d\chi_{I^N}(x))^{-1}\,, & \qquad \text{in } R_3\,, \\
w_3=(w_3)_{xx}=0\,,  &\qquad\text{on } \{0,\pi\}\times ]-\ell,\varepsilon[\,, \\
(w_3)_{yy}+\sigma (w_3)_{xx}=(w_3)_{yyy}+(2-\sigma)(w_3)_{xxy}=0\,,  & \qquad\text{on } ]0,\pi[\times \{-\ell\}\, ,\\
\end{cases}
\end{equation}
where $I^N$ is as defined in \eq{INN}. We also have to add the junction conditions:
\begin{equation} \label{junction12}
\small
w_1=w_2\, ,\ \ (w_1)_{y}=(w_2)_{y}\, ,\ \ (w_1)_{yy}=(w_2)_{yy}\, ,\ \ (w_1)_{yyy}=(w_2)_{yyy}\, ,\ \ {\rm in\ } ]0,\pi[ \times\{\varepsilon\}
\end{equation}
\begin{equation} \label{junction23}
\small
w_2=w_3\, ,\ \ (w_2)_{y}=(w_3)_{y}\, ,\ \ (w_2)_{yy}=(w_3)_{yy}\, ,\ \ (w_2)_{yyy}=(w_3)_{yyy}\, ,\ \ {\rm in\ } ]0,\pi[ \times\{-\varepsilon\}
\end{equation}

Let $\primop\in H^4(]0,\pi[)$ be the unique solution of
\neweq{equtildephi}
\left\{\begin{array}{ll}
\primop''''(x)+2\alpha^2\primop''(x)+\alpha^4\primop(x)=g(x)(1+d\chi_{I^N}(x))^{-1}\,, \qquad x\in]0,\pi[\,,\\
\primop(0)=\primop(\pi)=\primop''(0)=\primop''(\pi)=0\, .
\end{array}\right.
\endeq
By \eq{serietilde}, and recalling that $\alpha\not\in\mathbb{N}$, $\primop$ may be written as
$$
\primop(x)=\serie\frac{\primo_m}{(m^2-\alpha^2)^2}\smx\,, \quad x\in]0,\pi[\,,
$$
with the $\primo_m$ as defined in \eqref{gtilde}. Moreover, $\primop''\in H^2(]0,\pi[)$ is given by
$$
\primop''(x)=-\serie\frac{\primo_m\, m^2}{(m^2-\alpha^2)^2}\smx\,, \quad x\in]0,\pi[\,,
$$
and this series converges in $H^2(]0,\pi[)$ and, hence, uniformly. We will also need the constants
\begin{equation*}
\zeta_m^1:=\primo_m\, \frac{\sigma m^2\!-\!\alpha^2}{(m^2\!-\!\alpha^2)^2}\,,\qquad \zeta_m^2:=\primo_m\, \frac{(2\!-\!\sigma)m^2\!-\!\alpha^2}{(m^2\!-\!\alpha^2)^2}\,.
\end{equation*}

Let us now restrict our attention to $R_1$. By the system \eqref{equtildephi}, we have that $\Delta^2[K_\alpha e^{\alpha y}\primop(x)]=K_\alpha e^{\alpha y}g(x)(1+d\chi_{I^N}(x))^{-1}$. Hence, if we introduce the auxiliary function
$v_1(x,y):=w_1(x,y)-K_\alpha e^{\alpha y}\primop(x)$ with $w_1$ solving \eqref{eq:Poisson1}, we see that $v_1$ solves
\neweq{pbv1}
\small
\left\{\begin{array}{ll}
\Delta^2v_1=0\,,\quad & \mbox{in }R_1\,,\\
v_1=(v_1)_{xx}=0\,,\quad & \mbox{on }\{0,\pi\}\times ]\varepsilon,\ell[\,,\\
(v_1)_{yy}+\sigma (v_1)_{xx}=-K_\alpha e^{\alpha\ell}[\alpha^2\primop+\sigma\primop'']\,,\quad & \mbox{on }]0,\pi[\times\{\ell\}\,,\\
(v_1)_{yyy}+(2-\sigma)(v_1)_{xxy}=-K_\alpha \alpha e^{\alpha\ell}[\alpha^2\primop+(2-\sigma)\primop'']\,,  & \mbox{on }]0,\pi[\times\{\ell\}\, .
\end{array}\right.
\endeq
We seek solutions of \eqref{pbv1} by separating variables, namely we seek functions $Y_m^1=Y_m^1(y)$ such that
$v_1(x,y)=\serie Y_m^1(y)\smx$
solves \eqref{pbv1}. Then
$$\Delta^2v_1(x,y)=\serie[(Y_m^1)''''(y)-2m^2(Y_m^1)''(y)+m^4(Y_m^1)(y)]\smx\,,$$
and the equation in \eqref{pbv1} yields
\neweq{Ym}
(Y_m^1)''''(y)-2m^2(Y_m^1)''(y)+m^4Y_m^1(y)=0\,,\quad\mbox{for }y\in ]\varepsilon,\ell[\, .
\endeq
The solutions of \eqref{Ym} are linear combinations of $\cosh(my)$, $\sinh(my)$, $y\cosh(my)$, $y\sinh(my)$, that is,
$Y_m^1(y)=(A_1+C_1y)\cosh(my)+(B_1+D_1y)\sinh(my)$, 
where $A_1$, $B_1$, $C_1$, $D_1$ will be determined by imposing the boundary conditions in \eqref{junction12} and \eqref{pbv1}.
By differentiating we obtain
{\small
\begin{equation*}
\begin{split}
(Y_m^1)'(y)&=(A_1m+D_1+C_1my)\sinh(my)+(B_1m+C_1+D_1my)\cosh(my)+\sinh(my)\, ,\\
(Y_m^1)''(y)&=(A_1m^2+2D_1m+C_1m^2y)\cosh(my)+(B_1m^2+2C_1m+D_1m^2y)\sinh(my)\, ,\\
(Y_m^1)'''(y)&=(A_1m^3+3D_1m^2+C_1m^3y)\sinh(my)+(B_1m^3+3C_1m^2+D_1m^3y)\cosh(my)\, .
\end{split}
\end{equation*}}
The two boundary conditions on $]0,\pi[\times\{\varepsilon,\ell\}$, see \eqref{pbv1}, become respectively
{\small \begin{equation*}
\begin{split}
\serie[(Y_m^1)''(\ell)-\sigma m^2 Y_m^1(\ell)]\smx &=- K_\alpha e^{\alpha\ell}[\alpha^2\primop(x)+\sigma\primop''(x)]\, ,\\
\serie[(Y_m^1)'''(\ell)-(2-\sigma)m^2(Y_m^1)'(\ell)]\smx &=-K_\alpha e^{\alpha\ell}[\alpha^2\primop(x)+(2-\sigma)\primop''(x)]\, ,
\end{split}
\end{equation*}}
for all $x\in]0,\pi[$. Hence, from the Fourier expansion of $\phi$, we deduce that
$(Y_m^1)''(\ell)-\sigma m^2Y_m^1(\ell)=K_\alpha e^{\alpha\ell}\, \zeta_m^1$,
$(Y_m^1)'''(\ell)-(2\!-\!\sigma)m^2(Y_m^1)'(\ell)=K_\alpha e^{\alpha\ell}\, \alpha\, \zeta_m^2$.
By plugging this information into the explicit form of the derivatives of $Y_m^1$ we find the system
{\small
\neweq{system1}
\begin{array}{l}
\notag (1\!-\!\sigma)m^2\cosh(m\ell)\, A_1+m[2\cosh(m\ell)+(1\!-\!\sigma)m\ell\sinh(m\ell)]\, D_1+(1\!-\!\sigma)m^2\sinh(m\ell)\, B_1  \\
\notag + m[2\sinh(m\ell)+(1\!-\!\sigma)m\ell\cosh(m\ell)]\, C_1  = K_\alpha e^{\alpha\ell}\zeta_m^1\,,\\
\notag -(1\!-\!\sigma)m^3\cosh(m\ell)\, B_1+m^2[(1\!+\!\sigma)\cosh(m\ell)-(1\!-\!\sigma)m\ell\sinh(m\ell)]\, C_1\\ 
-(1\!-\!\sigma)m^3\sinh(m\ell)\, A_1 + m^2[(1\!+\!\sigma)\sinh(m\ell)-(1\!-\!\sigma)m\ell\cosh(m\ell)]\, D_1  =
K_\alpha e^{\alpha\ell}\, \alpha\, \zeta_m^2\,.\\
\end{array}
\endeq}

Similarly, for $R_3$ we introduce the function
$v_3(x,y):=w_3(x,y)-K_\alpha e^{\alpha y}\primop(x)$, with $w_3$ solving \eqref{eq:Poisson3}, and we see that $v_3$ satisfies
\neweq{pbv3}
\small
\left\{\begin{array}{ll}
\Delta^2v_3=0\,,\quad & \mbox{in }R_3\,,\\
v_3=(v_3)_{xx}=0\,,\quad & \mbox{on }\{0,\pi\}\times ]-\ell,-\varepsilon[\,,\\
(v_3)_{yy}+\sigma (v_3)_{xx}=-K_\alpha e^{-\alpha\ell}[\alpha^2\primop+\sigma\primop'']\,,\quad & \mbox{on }]0,\pi[\times\{-\ell\}\,,\\
(v_3)_{yyy}+(2-\sigma)(v_3)_{xxy}=-K_\alpha \alpha e^{-\alpha\ell}[\alpha^2\primop+(2-\sigma)\primop'']\,,  & \mbox{on }]0,\pi[\times\{-\ell\}\, .
\end{array}\right.
\endeq
By separating variables, we seek functions $Y_m^3=Y_m^3(y)$ so that the function
$v_3(x,y)=\serie Y_m^3(y)\smx$
solves \eqref{pbv3}. Then
$$\Delta^2v_3(x,y)=\serie[(Y_m^3)''''(y)-2m^2(Y_m^3)''(yY_m^2)+m^4Y_m^3(y)]\smx$$
and then
$Y_m^3(y)=A_3\cosh(my)+B_3\sinh(my)+C_3y\cosh(my)+D_3y\sinh(my)$.
As with $R_1$, we are then led to the system
{\small
\neweq{system2}
\begin{array}{l}
\notag (1\!-\!\sigma)m^2\cosh(m\ell)\, A_3+m[2\cosh(m\ell)+(1\!-\!\sigma)m\ell\sinh(m\ell)]\, D_3   \\
\notag -(1\!-\!\sigma)m^2\sinh(m\ell)\, B_3- m[2\sinh(m\ell)+(1\!-\!\sigma)m\ell\cosh(m\ell)]\, C_3  =
K_\alpha e^{-\alpha\ell}\, \zeta_m^1\,, \\
\notag -(1\!-\!\sigma)m^3\cosh(m\ell)\, B_3+m^2[(1\!+\!\sigma)\cosh(m\ell)-(1\!-\!\sigma)m\ell\sinh(m\ell)]\, C_3 \\
 +(1\!-\!\sigma)m^3\sinh(m\ell)\, A_3 -m^2[(1\!+\!\sigma)\sinh(m\ell)-(1\!-\!\sigma)m\ell\cosh(m\ell)]\, D_3  =
 K_\alpha e^{-\alpha\ell}\, \alpha\, \zeta_m^2\, .
\end{array}
\endeq}

Finally, let $\secondo_m$ be the Fourier coefficients of $g$, see \eqref{serieg}, and $\secondop\in H^4(]0,\pi[)$ be defined as $
\secondop(x)=\serie\frac{\secondo_m}{(m^2-\alpha^2)^2}\smx$, for every $x\in]0,\pi[$.
For $R_2$ we introduce the auxiliary function
$v_2(x,y):=w_2(x,y)-K_\alpha e^{\alpha y}\frac{\secondop(x)}{1+d}$ with $\secondop$ as above and $w_2$ solving \eqref{eq:Poisson2}, and we see that $v_2$ satisfies
\neweq{pbv2}
\left\{\begin{array}{ll}
\Delta^2v_2=0\quad & \mbox{in }R_2\\
v_2=(v_2)_{xx}=0\quad & \mbox{on }\{0,\pi\}\times ]-\varepsilon,\varepsilon[\, .
\end{array}\right.
\endeq
We seek again solutions of \eqref{pbv2} by separating variables, namely we seek functions $Y_m^2=Y_m^2(y)$ such that
$v_2(x,y)=\serie Y_m^2(y)\smx$
solves \eqref{pbv2}. Then
$$\Delta^2v_2(x,y)=\serie[(Y_m^2)''''(y)-2m^2(Y_m^2)''(y)+m^4Y_m^2(y)]\smx$$
and then
$
Y_m^2(y)=A_2\cosh(my)+B_2\sinh(my)+C_2y\cosh(my)+D_2y\sinh(my)\,.
$

In this case we have no boundary conditions to use as a constraint. Instead, we impose the junction conditions \eqref{junction12} and \eqref{junction23} by which we get the relations \eqref{A1D3}.
Combining \eqref{A1D3} with \eqref{system1} and \eqref{system2}, we obtain a $4\times 4$ system in the unknowns $A_2, B_2, C_2, D_2$ which decouples into the following $2\times 2$ systems:
 {\small $$
\left\{\begin{array}{ll}
\!\!\!\!(1\!-\!\sigma)m^2\cosh(m\ell) A_2+m[2\cosh(m\ell)+(1\!-\!\sigma)m\ell\sinh(m\ell)] D_2\!=\!
K_\alpha \zeta_m^1 \cosh(\alpha\ell)+G_1\\
\!\!\!\!-(1\!-\!\sigma)m^3\!\sinh(m\ell) A_2\!\!+\!m^2[(1\!+\!\sigma)\!\sinh(m\ell)\!\!-\!\!(1\!-\!\sigma)m\ell\!\cosh(m\ell)] D_2\!\!=\!
\alpha K_\alpha \zeta_m^2 \sinh(\alpha\ell)\!\!+\!G_2 
\end{array}\right.
$$}
and
{\small $$
\left\{\begin{array}{ll}
\!\!\!\!(1\!-\!\sigma)m^2\sinh(m\ell) B_2\!\!+\!m[2\!\sinh(m\ell)+(1\!-\!\sigma)m\ell\!\cosh(m\ell)] C_2\!\!=\!
K_\alpha \zeta_m^1\! \sinh(\alpha\ell)\!\!+\!G_3\\
\!\!\!\!-(1\!-\!\sigma)m^3\!\cosh(m\ell) B_2\!\!+\!m^2[(1\!+\!\sigma)\!\cosh(m\ell)\!\!-\!\!(1\!-\!\sigma)m\ell\!\sinh(m\ell)] C_2\!\!=\!
\alpha K_\alpha\zeta_m^2 \cosh(\alpha\ell)\!\!+\!G_4
\end{array}\right.
$$}
where the $G_i=G_i(m,\alpha,\eps)$ are defined in \eqref{G}. The solutions of the above systems are given in  \eqref{A2B2C2D2} and, combined with \eq{A1D3}, allow us to write the explicit form of $w_1$, $w_2$ and $w_3$.\par
To complete the proof of Proposition \ref{exponentialcross} we show that the series defining $w_\alpha$ converges in the next lemma.\end{proof}

\begin{lemma}
If $g\in L^2(]0,\pi[)$, then the series defining $w_\alpha$ in Proposition \ref{exponentialcross} converges uniformly in $\overline \Omega$ up to the second derivative.
\end{lemma}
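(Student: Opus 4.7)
The plan rests on two ingredients: the $\ell^2$-summability of the Fourier coefficients associated with $g$, and a careful $m$-asymptotic analysis of the constants $A_i, B_i, C_i, D_i$ entering Proposition~\ref{exponentialcross}.

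First I would observe that if $g\in L^2(]0,\pi[)$ then Parseval's theorem gives $\{\secondo_m\}\in\ell^2$; inspecting \eqref{gtilde}, the same summability holds for $\{\primo_m\}$ (which is obtained from $\secondo_m$ by subtracting integrals over the vertical strips, a bounded operation on $\ell^2$) and for $\{\widehat\gamma_m\}=\{\secondo_m/(1+d)\}$. By Cauchy--Schwarz, this yields $\sum_m(|\primo_m|+|\widehat\gamma_m|)/m^s<\infty$ for every $s>1/2$.

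Next, I would carry out a large-$m$ asymptotic analysis of the constants in \eqref{Fi}--\eqref{A1D3}. The idea is that, although $\cosh(my),\sinh(my)$ grow as $e^{m|y|}\le e^{m\ell}$ on $\overline\Omega$, the definitions \eqref{A2B2C2D2} involve denominators of order $\sinh(m\ell)\cosh(m\ell)\sim e^{2m\ell}/2$ and $m^3\sinh(m\ell)\sim m^3 e^{m\ell}/2$, which compensate for such growth. Tracking the exponential orders, one shows that each summand $Y_m^i(y)\sin(mx)$ of $w_i$ admits, on $\overline{R_i}$, a pointwise bound of the form
\[
|Y_m^i(y)\sin(mx)|+|\partial_y Y_m^i(y)\sin(mx)|+|\partial_{yy}Y_m^i(y)\sin(mx)|\le \frac{C(|\primo_m|+|\widehat\gamma_m|)}{m^{s}},
\]
for some $s>5/2$ and some $C=C(\alpha,\ell,\eps,\sigma,d)$; the constant $C$ may depend on $\alpha$ but not on $m$ nor on $(x,y)$. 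The same kind of bound holds after applying $\partial_x$ or $\partial_{xx}$, since each $x$-differentiation costs at most a factor $m$ from $\sin(mx)\to m\cos(mx)$. The extra piece $\frac{e^{\alpha y}K_\alpha \gamma_m^i}{(m^2-\alpha^2)^2}\sin(mx)$ is already $O(|\gamma_m^i|/m^4)$ uniformly in $y\in[-\ell,\ell]$, and is therefore harmless.

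Step three is the conclusion via the Weierstrass $M$-test: the bounds on each summand and its partial derivatives up to order two are dominated by $C(|\primo_m|+|\widehat\gamma_m|)/m^{s-2}$, which is summable by Cauchy--Schwarz thanks to Step~1. This yields uniform convergence of the series and of its partial derivatives up to order two on each closed rectangle $\overline{R_i}$, hence on $\overline\Omega$.

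The main obstacle is the asymptotic bookkeeping in Step~2. The formulas \eqref{G} for $G_1,\dots,G_4$ involve products of $F_j^\pm(\eps)=O(me^{m\eps})$ with hyperbolic functions of size $e^{m\ell}$, producing numerators of apparent size $e^{m(\ell+\eps)}$; one has to verify that, after division by the denominators in \eqref{A2B2C2D2} and after summing the corrections prescribed by \eqref{A1D3}, these potentially large pieces are either cancelled or absorbed, leaving only polynomially small remainders in $m$. The junction conditions between the three pieces $w_1,w_2,w_3$, built into \eqref{A1D3}, are precisely what forces this cancellation, mirroring the fact that the solution of the elliptic problem \eqref{eq:Poisson_CROSS} is a bounded $H^4$-regular function on each rectangle.
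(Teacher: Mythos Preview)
Your plan follows the same route as the paper: extract $\ell^2$-summability of $\{\primo_m\},\{\widehat\gamma_m\}$ from Parseval, do an $m\to\infty$ asymptotic analysis of the constants in \eqref{A2B2C2D2}--\eqref{A1D3}, and close with the Weierstrass $M$-test via Cauchy--Schwarz. One slip to correct: your displayed inequality asserts that $\partial_y Y_m^i$ and $\partial_{yy}Y_m^i$ are also $O((|\primo_m|+|\widehat\gamma_m|)/m^{s})$ with $s>5/2$, but each $y$-differentiation of $\cosh(my),\sinh(my)$ costs a factor $m$ exactly as $x$-differentiation does, so the second $y$-derivative is only $O((|\primo_m|+|\widehat\gamma_m|)/m)$. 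This is still summable by Cauchy--Schwarz, so the conclusion survives once the bound is stated consistently.

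On the cancellation issue you raise in your last paragraph, the paper makes it concrete by analysing the combinations $A_i\pm B_i$ and $C_i\pm D_i$ rather than the individual constants. Writing $\cosh(my)=\tfrac12(e^{my}+e^{-my})$ and using $e^{my}\le e^{m\ell}$, $e^{-my}\le e^{-m\eps}$ on $R_1$, the task reduces to showing $A_1+B_1,\,C_1+D_1=O(\primo_m e^{-m\ell}/m^3)$ and $A_1-B_1,\,C_1-D_1=O((\primo_m-\widehat\gamma_m)e^{m\eps}/m^3)$; the algebra (which the paper carries out) confirms exactly these orders, and the analogous $\pm$ combinations handle $w_2$ and $w_3$.
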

\begin{proof}
We start by studying the uniform convergence of the series which defines $w_1$. We have
{\small
$$|w_1|\leq\!\serie \!\Big( \frac{e^{m\ell}}{2} \left(|A_1+B_1|+\ell |C_1+D_1|\right)+\frac{e^{-m\eps}}{2} \left(|A_1-B_1|+\ell |C_1-D_1|\right)+\tfrac{e^{\alpha \ell}K_\alpha\primo_m}{(m^2-\alpha^2)^2}\Big).$$}
From the following relations
\begin{equation*}
\small
\begin{split}
\frac{2 m\tanh(m\ell)G_1}{a}+\frac{2 G_2}{a}=& \displaystyle \frac{1}{\cosh(m\ell)}[(1-\sigma)m^3F_2^-+2m^2F_3^-+m^3(1-\sigma)\ell F_4^+)]\\
&-(\sigma+3)m^2[F_3^-\cosh(m\ell)+F_4^+ \sinh(m\ell)]\,,\\
\frac{2 \tanh(m\ell)G_4}{a}+\frac{2m G_3}{a}=& \displaystyle \frac{1}{\cosh(m\ell)}[-(1-\sigma)m^3F_1^--(1-\sigma)m^3\ell F_3^++m^2(1+\sigma) F_4^-)]\\
&-(\sigma+3)m^2[F_4^-\cosh(m\ell)+ F_3^+ \sinh(m\ell)]\,,
\end{split}
\end{equation*}
and after some lengthy calculations, as $m \rightarrow +\infty$, we get
\begin{equation}\label{A2+B2_C2+D2}
\begin{split}
A_2+B_2& = -a(F_1(\eps)+F_2(\eps))+\overline K_1 \frac{\primo_m e^{-m\ell}}{m^3}+o\left(\frac{\primo_m e^{-m\ell}}{m^3}\right)\,,\\
C_2+D_2& = -a(F_3(\eps)+F_4(\eps))+ \overline K_2 \frac{\primo_m e^{-m\ell}}{m^3}+o\left(\frac{\primo_m e^{-m\ell}}{m^3}\right)\,,
\end{split}
\end{equation}
for some $\overline K_1, \overline K_2 \in \R \setminus \{0\}$. Hence, as $m\to\infty$, we have
\begin{equation}\label{A1+B1_C1+D1}
\small
A_1+B_1=\overline K_1 \frac{\primo_m e^{-m\ell}}{m^3}+o\left(\tfrac{\primo_m e^{-m\ell}}{m^3}\right)\,,\quad
C_1+D_1=\overline K_2 \frac{\primo_m e^{-m\ell}}{m^3}+o\left(\tfrac{\primo_m e^{-m\ell}}{m^3}\right)\,.
\end{equation}
On the other hand, as $m \rightarrow +\infty$, we have
\begin{equation}\label{A2-B2_C2-D2}
\begin{split}
A_2-B_2& =  \overline K_3 (\primo_m-\widehat \gamma_m) \frac{e^{-m\eps}}{m^3}+o\left((\primo_m-\widehat \gamma_m)\frac{e^{-m\eps}}{m^3}\right)\,,\\
C_2-D_2& =  \overline K_4 (\primo_m-\widehat \gamma_m) \frac{ e^{-m\eps}}{m^3}+o\left((\primo_m-\widehat \gamma_m) \frac{e^{-m\eps}}{m^3}\right)\,,
\end{split}
\end{equation}
for some $\overline K_3, \overline K_4 \in \R \setminus \{0\}$. Hence, since as $m \rightarrow +\infty$ we have
\begin{equation*}
\begin{split}
a(F_1(\eps)-F_2(\eps))&=- K_\alpha \eps e^{\alpha \eps}(\primo_m-\widehat \gamma_m) \frac{ e^{m\eps}}{2m^3}+o\left((\primo_m-\widehat \gamma_m) \frac{ e^{m\eps}}{ m^3}\right) \,,\\
 a(F_3(\eps)-F_4(\eps))&= K_\alpha e^{\alpha \eps}(\primo_m-\widehat \gamma_m) \frac{ e^{m\eps}}{2m^3}+o\left((\primo_m-\widehat \gamma_m) \frac{ e^{m\eps}}{ m^3}\right)\,,
\end{split}
\end{equation*}
we conclude that
\begin{eqnarray*}
A_1-B_1& = & -K_\alpha  \eps e^{\alpha \eps} (\primo_m-\widehat \gamma_m)  \frac{e^{m\eps}}{2m^3}+o\left((\primo_m-\widehat \gamma_m)\frac{e^{m\eps}}{m^3}\right)\,,\\
C_1-D_1& = & K_\alpha  e^{\alpha \eps} (\primo_m-\widehat \gamma_m) \frac{ e^{m\eps}}{2m^3}+o\left((\primo_m-\widehat \gamma_m) \frac{e^{m\eps}}{m^3}\right)\,.
\end{eqnarray*}
The above equalities, together with \eqref{A1+B1_C1+D1} and the summability of the coefficients $ \secondo_m$ and $\widehat \gamma_m$, prove the uniform convergence of
the series defining $w_1$ up to the second derivative.\par
Next,we consider $w_2$. We estimate
{\small
$$|w_2|\leq\!\serie \!\Big( \frac{e^{m\eps}}{2} \left(|A_2+B_2|+\eps |C_2+D_2|\right)+\frac{e^{m\eps}}{2} \left(|A_2-B_2|+\eps |C_2-D_2|\right)+\frac{e^{\alpha \eps}K_\alpha\widehat \gamma_m}{(m^2-\alpha^2)^2}\Big).$$}
Since as $m \rightarrow +\infty$ we have
$$a(F_1(\eps)+F_2(\eps))= K_\alpha \eps e^{\alpha \eps}(\primo_m-\widehat \gamma_m) \frac{ e^{-m\eps}}{2 m^3}+o\left((\primo_m-\widehat \gamma_m) \frac{ e^{-m\eps}}{ m^3}\right) \,,$$
$$ a(F_3(\eps)+F_4(\eps))= K_\alpha e^{\alpha \eps}(\primo_m-\widehat \gamma_m) \frac{ e^{-m\eps}}{2m^3}+o\left((\primo_m-\widehat \gamma_m) \frac{ e^{-m\eps}}{ m^3}\right)\,,$$
from \eqref{A2+B2_C2+D2} we deduce that
\begin{eqnarray*}
A_2+B_2& = &-K_\alpha \eps e^{\alpha \eps}(\primo_m-\widehat \gamma_m) \frac{ e^{-m\eps}}{2 m^3}+o\left((\primo_m-\widehat \gamma_m) \frac{ e^{-m\eps}}{ m^3}\right)\,,\notag\\
C_2+D_2& = & - K_\alpha e^{\alpha \eps}(\primo_m-\widehat \gamma_m) \frac{ e^{-m\eps}}{2m^3}+o\left((\primo_m-\widehat \gamma_m) \frac{ e^{-m\eps}}{ m^3}\right)\,.
\end{eqnarray*}
This, jointly with \eqref{A2-B2_C2-D2} and the summability of the coefficients $ \secondo_m,\, \widehat \gamma_m$, proves the uniform convergence of the series defining $w_2$ up to the second derivative.\par
The computations for $w_3$ are similar to those for $w_1$ and we omit them.\end{proof}


Now, we focus on the limiting behavior of $w_{\alpha}$ as $\alpha \rightarrow +\infty$.
We first determine the limit of $h_\alpha$.

\begin{lemma}\label{limitfalpha_rinforzo}
Let $C_g$ be as in \eqref{f2} and $I^N$ be as in \eqref{INN}.
As $\alpha\to+\infty$ we have that $h_{\alpha} \to \overline h $ in $H^{-2}_*(\Omega)$, where $\overline h \in H^{-2}_*(\Omega)$ is defined as follows
$$\langle \overline h,v\rangle=\int_0^\pi \frac{g(x)}{C_g(1+d\chi_{I^N}(x))}\, v(x,\ell)\, dx\qquad\forall v\in H^2_*(\Omega)\,.$$
\end{lemma}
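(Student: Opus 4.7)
The plan is to view $K_\alpha e^{\alpha y}\,dy$ as an approximate identity in the variable $y$ whose total mass $1/C_g$ concentrates at $y=\ell$ as $\alpha\to+\infty$; indeed, a direct computation using $K_\alpha=\alpha/(2C_g\sinh(\alpha\ell))$ gives $\int_{-\ell}^\ell K_\alpha e^{\alpha y}\,dy=1/C_g$. A second key observation is that the horizontal arm of $D^N$ in \eqref{Deps} is $]0,\pi[\times]-\varepsilon,\varepsilon[$, so $\chi_{D^N}(x,y)=\chi_{I^N}(x)$ whenever $|y|>\varepsilon$: the denominator $1+d\chi_{D^N}$ therefore simplifies precisely where $K_\alpha e^{\alpha y}$ carries essentially all of its mass.

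Fix $v\in\hs$ and split the pairing $\langle h_\alpha,v\rangle$ at $y=\varepsilon$. On $\{y\le\varepsilon\}$, the $L^1$-mass of $K_\alpha e^{\alpha y}$ equals $(e^{\alpha\varepsilon}-e^{-\alpha\ell})/(2 C_g\sinh(\alpha\ell))=O(e^{-\alpha(\ell-\varepsilon)})$, so combining this with $\|v\|_{C^0(\overline\Omega)}\le C\|v\|_{\hs}$ (from the embedding $\hs\hookrightarrow C^0(\overline\Omega)$) and $g\in L^1$, that contribution vanishes exponentially. On $\{y>\varepsilon\}$ the integrand reduces to $K_\alpha e^{\alpha y}g(x)v(x,y)/(1+d\chi_{I^N}(x))$, and the heart of the argument becomes showing
\[
\int_\varepsilon^\ell K_\alpha e^{\alpha y}\,v(x,y)\,dy\,\longrightarrow\,\frac{v(x,\ell)}{C_g}\qquad\text{as }\alpha\to+\infty,
\]
uniformly in $x\in[0,\pi]$. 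For this I would write the difference as $\int_\varepsilon^\ell K_\alpha e^{\alpha y}[v(x,y)-v(x,\ell)]\,dy+v(x,\ell)\bigl(\int_\varepsilon^\ell K_\alpha e^{\alpha y}\,dy-1/C_g\bigr)$; the second piece is $v(x,\ell)$ times an exponentially small quantity (again by the total-mass identity), while for the first piece I would split at $y=\ell-\eta$: on $[\varepsilon,\ell-\eta]$ the weight has $L^1$-mass $O(e^{-\alpha\eta})$, and on $[\ell-\eta,\ell]$ the weight has mass at most $1/C_g+o(1)$ and the integrand is controlled by the modulus of continuity of $v$. Choosing $\eta$ small first and then $\alpha$ large makes the whole expression as small as desired, and multiplying by $g(x)/(1+d\chi_{I^N}(x))\in L^1(]0,\pi[)$ and integrating in $x$ then delivers $\langle h_\alpha,v\rangle\to\langle\overline h,v\rangle$.

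The main obstacle is to make all of these estimates \emph{uniform} over $v$ in the unit ball of $\hs$, which is needed to upgrade the pointwise (weak-$*$) testing to norm convergence in $\hh(\Omega)$. For this I would invoke the two-dimensional Sobolev embedding $\hs\hookrightarrow C^{0,\beta}(\overline\Omega)$ for some $\beta\in(0,1)$: it yields the bound $\omega_v(\eta)\le C\|v\|_{\hs}\eta^\beta$ on the modulus of continuity together with a uniform $C^0$-bound, so the rate of convergence produced above is uniform on $\{\|v\|_{\hs}\le 1\}$. With this uniformity in hand, the same approximate-identity argument delivers the stronger conclusion $\|h_\alpha-\overline h\|_{\hh}\to 0$.
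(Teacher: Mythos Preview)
Your argument is correct. The paper does not actually write out this proof; it only says it follows by an integration by parts ``similar to that of Lemma~\ref{limitfalphanew}.'' There, one integrates $\int_{-\ell}^{\ell} K_\alpha e^{\alpha y}v(x,y)\,dy$ by parts in $y$: the boundary term produces $v(x,\ell)/C_g$ directly, and the remainder involves $v_y$, which is controlled in $L^2(\Omega)$ by $\|v\|_{H^2_*}$; a Cauchy--Schwarz estimate then gives an explicit $O(\alpha^{-1/2})$ rate, uniformly on the unit ball of $H^2_*(\Omega)$. The splitting at $|y|=\varepsilon$ you perform, to reduce the denominator $1+d\chi_{D^N}$ to the purely $x$-dependent factor $1+d\chi_{I^N}$, is needed in both approaches.

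Your route---treating $K_\alpha e^{\alpha y}\,dy$ as an approximate identity and invoking the H\"older embedding $H^2_*(\Omega)\hookrightarrow C^{0,\beta}(\overline\Omega)$ to get a uniform modulus of continuity---is genuinely different. It trades the explicit algebraic rate for a softer argument, but it has the virtue of making the uniformity over $\{\|v\|_{H^2_*}\le 1\}$ (hence the \emph{norm} convergence in $H^{-2}_*$) completely transparent; the paper's sketch, by contrast, phrases the claim as convergence ``for all $v$'' and leaves the upgrade to norm convergence implicit.
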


The proof is a consequence of an integration by parts, similar to that of Lemma \ref{limitfalphanew} below and therefore we omit it.
Next, we set
\begin{equation}\label{ubar}
\overline w (x,y)=\!\!\serie\!\! \left((\overline A_2+\overline C_2y)\cosh(my)+(\overline B_2+\overline D_2y)\sinh(my)\right)\smx\,,
\end{equation}
where $\overline A_2=\overline A_2(m)$, $\overline B_2=\overline B_2(m)$, $\overline C_2=\overline C_2(m)$, $\overline D_2=\overline D_2(m)$ are given by
\begin{equation*}
\begin{split}
\overline A_2:=& \frac{\primo_m \left((1-\sigma)m\ell\sinh(m\ell)+2\cosh(m\ell)\right)}{2C_g(1\!-\!\sigma)m^3\left[ (3+\sigma)\sinh(m\ell)\cosh(m\ell)-(1-\sigma)m\ell\right]}\,,\\
\overline B_2:=&  \frac{ \primo_m \left((1-\sigma)m\ell\cosh(m\ell)+2\sinh(m\ell)\right)}{2C_g (1\!-\!\sigma)m^3\left[(3+\sigma)\sinh(m\ell)\cosh(m\ell)+(1-\sigma)m\ell\right]}\,,\\
\overline C_2:=& - \frac{\primo_m\sinh(m\ell) }{2C_g m^2\left[(3+\sigma)\sinh(m\ell)\cosh(m\ell)+(1-\sigma)m\ell \right]}\,,\\
\overline D_2:=& - \frac{\primo_m \cosh(m\ell)}{2C_g m^2\left[ (3+\sigma)\sinh(m\ell)\cosh(m\ell)-(1-\sigma)m\ell\right]}\,,
\end{split}
\end{equation*}
with $\primo_m$ as in \eq{gtilde}. The following lemma holds.
\begin{lemma}\label{limitfalpha2_rinforzo}
Let $w_\alpha$ be the unique solution of \eqref{eq:Poisson_CROSS}, $\overline w$ be as in \eqref{ubar} and $\overline h$ be as in Lemma \ref{limitfalpha_rinforzo}. Then, as $\alpha\to+\infty$, we have that $w_\alpha(x,y)\to\overline w\left(x,y\right)$ in $H_*^2(\Omega)$ and $\overline w$ is the unique
solution of the problem
\neweq{weaklimit}
(w,v)_{H^2_*}=\langle \overline h,v\rangle\qquad\forall v\in H^2_*(\Omega)\,.
\endeq
\end{lemma}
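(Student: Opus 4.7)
My plan is to identify $w_\alpha$ and the target limit as Riesz representatives in the Hilbert space $(H^2_*(\Omega),(\cdot,\cdot)_{H^2_*})$, reduce the convergence to the already-established convergence $h_\alpha\to\overline h$ in $H^{-2}_*(\Omega)$ from Lemma~\ref{limitfalpha_rinforzo}, and finally identify the abstract limit with the explicit $\overline w$ in~\eqref{ubar} by passing to the limit in the series of Proposition~\ref{exponentialcross}.

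First I would observe that, since $h_\alpha\in L^2(\Omega)\subset H^{-2}_*(\Omega)$, a standard integration by parts using all the boundary conditions in \eqref{eq:Poisson_CROSS} shows that $w_\alpha$ satisfies the weak formulation
$(w_\alpha,v)_{H^2_*}=\langle h_\alpha,v\rangle$ for every $v\in H^2_*(\Omega)$.
On the other hand, Riesz representation gives a unique $\overline w^{*}\in H^2_*(\Omega)$ solving \eqref{weaklimit}. Subtracting the two identities yields $(w_\alpha-\overline w^{*},v)_{H^2_*}=\langle h_\alpha-\overline h,v\rangle$, and choosing $v=w_\alpha-\overline w^{*}$ together with the duality bound gives
$\|w_\alpha-\overline w^{*}\|_{H^2_*}\leq\|h_\alpha-\overline h\|_{H^{-2}_*}$,
which tends to $0$ thanks to Lemma~\ref{limitfalpha_rinforzo}. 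Hence $w_\alpha\to \overline w^{*}$ in $H^2_*(\Omega)$ and $\overline w^{*}$ is the unique solution of \eqref{weaklimit}; what is left is to show $\overline w^{*}=\overline w$.

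For this identification my plan is to take $\alpha\to+\infty$ in the piecewise expression of Proposition~\ref{exponentialcross}. The relevant asymptotics are straightforward: $K_\alpha=O(\alpha e^{-\alpha\ell})$, so the particular solution contribution $e^{\alpha y}K_\alpha\gamma_m^i/(m^2-\alpha^2)^2$ decays uniformly on $\overline\Omega$; the parameter $a$ defined in \eqref{a} and the weights $e^{-2\alpha\varepsilon}$ entering \eqref{Fpm} vanish, so the $G_j$ in \eqref{G} tend to $0$ and the junction corrections in \eqref{A1D3} force $A_1-A_2,A_3-A_2\to 0$ and similarly for $B_i,C_i,D_i$. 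Thus the three pieces of $w_\alpha$ fuse into a single expression. In the two $2\times 2$ systems at the end of the proof of Proposition~\ref{exponentialcross}, the terms $K_\alpha\zeta_m^1\cosh(\alpha\ell)$ and $K_\alpha\zeta_m^1\sinh(\alpha\ell)$ vanish (since $\zeta_m^1=O(1/\alpha^2)$ while $K_\alpha\cosh(\alpha\ell)=O(\alpha)$), whereas $\alpha K_\alpha\zeta_m^2\sinh(\alpha\ell)$ and $\alpha K_\alpha\zeta_m^2\cosh(\alpha\ell)$ both converge to $-\gamma_m/(2C_g)$. Solving the resulting limit system recovers precisely the coefficients $\overline A_2,\overline B_2,\overline C_2,\overline D_2$ displayed before~\eqref{ubar}.

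The main obstacle I anticipate is controlling these asymptotics uniformly in the Fourier index $m$ so as to justify passing to the limit term by term in the series. However, the required bounds are of the same flavour as those already carried out in the preceding lemma on the uniform convergence of the series defining $w_\alpha$ up to second derivatives; combining them with the unconditional $H^2_*$-convergence from the Riesz step fixes the limit of each Fourier mode and identifies $\overline w^{*}$ with the explicit $\overline w$ of \eqref{ubar}, completing the proof.
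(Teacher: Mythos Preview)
Your proposal is correct and follows essentially the same strategy as the paper: both arguments combine the Riesz estimate $\|w_\alpha-w_{\overline h}\|_{H^2_*}\le\|h_\alpha-\overline h\|_{H^{-2}_*}$ (giving $H^2_*$-convergence to the abstract solution of \eqref{weaklimit}) with the termwise asymptotics of the coefficients $A_i,B_i,C_i,D_i$ (giving the identification with the explicit $\overline w$ in \eqref{ubar}). The only difference is the order: the paper first establishes $w_\alpha\to\overline w$ a.e.\ from the coefficient asymptotics and then invokes the Riesz bound, whereas you do the Riesz step first and use it to bypass the uniform-in-$m$ issue in the identification; both routes lead to the same conclusion.
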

\begin{proof}
Let $A_2$, $B_2$, $C_2$, $D_2$ be as defined in \eqref{A2B2C2D2}, let $\overline A_2$, $\overline B_2$, $\overline C_2$, $\overline D_2$ be as defined in \eqref{ubar}, we get $A_2\to \overline A_2$, $B_2\to \overline B_2$, $C_2\to \overline C_2$, $D_2\to \overline D_2$ {as }$\alpha \rightarrow +\infty$.
Moreover, from \eqref{f2}, \eqref{Fi}, and \eqref{a}, as $\alpha\to+\infty$, we have
$$K_\alpha=\frac{\alpha e^{- \alpha \ell}}{ C_g}+o(\alpha e^{- \alpha \ell} )\,,\qquad   a= \frac{e^{-\alpha(\ell-\eps)}(\primo_m-\widehat\gamma_m)}{C_g \alpha^3} +o\left(\frac{e^{-\alpha(\ell-\eps)}}{ \alpha^3}\right)\,,$$
and
\begin{eqnarray*}
F_1(\eps)=\frac{\sinh(m\varepsilon)-m\varepsilon\cosh(m\varepsilon)}{2m^3} \alpha^3+o(\alpha^3)\,, & F_3(\eps) = \frac{\cosh(m\varepsilon)}{2m^2}\, \alpha^3+o(\alpha^3)  \,,\\
F_2(\eps)= \frac{m\varepsilon\sinh(m\varepsilon)-\cosh(m\varepsilon)}{2m^3}\, \alpha^3+o(\alpha^3)\,, & F_4(\eps) = -\frac{\sinh(m\varepsilon)}{2m^2}\,  \alpha^3+o(\alpha^3)\, .
\end{eqnarray*}
Hence, recalling the definition of the constants $A_i$, $B_i$, $C_i$, $D_i$ given in \eqref{A2B2C2D2} and \eqref{A1D3}, for $i=1,2,3$, we deduce that $A_i\to \overline A_2$, $B_i\to \overline B_2$, $C_i\to \overline C_2$, $D_i\to \overline D_2$, as $\alpha \rightarrow +\infty$.
Therefore, by exploiting the summability of the coefficients in the series defining $w_\alpha$, we conclude that $w_\alpha\to \overline w $ a.e.\ in $\Omega$, as $\alpha\to+\infty$. \par
On the other hand, by Lemma \ref{limitfalpha_rinforzo}, there holds $h_{\alpha} \to \overline h $ in $H^{-2}_*(\Omega)$ as $\alpha\to+\infty$. Let $w_{\overline h} \in H^2_*(\Omega)$ be the unique solution of \eqref{weaklimit}, so that clearly
$\|w_{\alpha}- w_{\overline h} \|_{H^2_*}\leq \|h_{\alpha}-\overline h\|_{H^{-2}_*}$, $w_{\alpha}\to  w_{\overline h} $ in $H^2_*(\Omega)$ and uniformly in $\overline \Omega$. Hence, $\overline w  \equiv w_{\overline h}$ and this concludes the proof of the lemma.\end{proof}

\begin{proof}[Proof of Theorem \ref{gap_limit_swiss}]
By Proposition \ref{exponentialcross}, the gap function corresponding to $w_\alpha$ is
$\GG_\alpha\left(x\right)=w_1(x,\ell)-w_3(x,-\ell)=\serie \xi_m(\alpha) \smx$
with the coefficients
\begin{eqnarray}\label{Im}
\xi_m(\alpha)&:=& (A_1-A_3)\cosh(m\ell)+(B_1+B_3)\sinh(m\ell)+(C_1+C_3)\ell\cosh(m\ell)\notag\\
&\ & +(D_1-D_3)\ell\sinh(m\ell)+\frac{\alpha \primo_m}{C_g(m^2-\alpha^2)^2}\,,
\end{eqnarray}
while the gap function corresponding to $\overline w$ can be written as the function
$\overline \GG\left(x\right)=\overline w\left(x,\ell\right)-\overline w\left(x,-\ell\right)=\serie \Emo \smx$
with the coefficients
\begin{eqnarray}\label{Em_asym}
\Emo&=& \left(2\overline B_2\sinh(m\ell)+2\overline C_2\ell\cosh(m\ell)\right)\smx=\frac{2\, \primo_m \, \Upsilon_m}{C_g(1-\sigma)}\,,
\end{eqnarray}
where the coefficients $\Upsilon_m$ are as defined in \eq{Um}.
Since, by Lemma \ref{limitfalpha2_rinforzo}, the function $w_\alpha$ converges to $\overline w$ uniformly in $\overline \Omega$, we have
\begin{equation*}
\begin{split}
\max_{x\in[0,\pi]}|\GG_\alpha\left(x\right)-\overline \GG \left(x\right)|=\max_{x\in[0,\pi]}\left|\serie \xi_m(\alpha)\smx-\serie \Emo \smx \right| \\
\notag \leq \max_{x\in [0,\pi]}|w_\alpha\left(x,\ell\right)-\overline w\left(x,\ell\right)|+\max_{x\in [0,\pi]}|w_\alpha\left(x,-\ell\right)-\overline w\left(x,-\ell\right)|,
\end{split}
\end{equation*}
and so the right-hand side converges to zero.

Next, we specify the asymptotic behavior of $\xi_m(\alpha)$. To this end, by
Proposition \ref{exponentialcross}, we note that, as $\alpha \to +\infty$, the following estimates hold
{\small \begin{equation*}
\begin{split}
A_1-A_3&=a(F_1(\varepsilon)-e^{-2\alpha\varepsilon}F_1(-\varepsilon))=O(e^{-\alpha(\ell-\varepsilon)})+O(e^{-\alpha(\ell+\varepsilon)})\,,\\
B_1+B_3&=2B_2+a(F_2(\varepsilon)+e^{-2\alpha\varepsilon}F_2(-\varepsilon))=2B_2+O(e^{-\alpha(\ell-\varepsilon)})+O(e^{-\alpha(\ell+\varepsilon)})\,,\\
C_1+C_3&=2C_2+a(F_3(\varepsilon)+e^{-2\alpha\varepsilon}F_3(-\varepsilon))=2C_2+O(e^{-\alpha(\ell-\varepsilon)})+O(e^{-\alpha(\ell+\varepsilon)})\,,\\
D_1-D_3&=a(F_4(\varepsilon)-e^{-2\alpha\varepsilon}F_4(-\varepsilon))=O(e^{-\alpha(\ell-\varepsilon)})+O(e^{-\alpha(\ell+\varepsilon)})\,,\\
B_2=\overline B_2 &-\frac{\primo_m \left((1+\sigma)\cosh(m\ell)-(1-\sigma)m\ell\sinh(m\ell)\right)}{2 C_g(1-\sigma)m^2\left[(3+\sigma)\sinh(m\ell)\cosh(m\ell)+(1-\sigma)m\ell\right]}\,\frac{1}{\alpha}+o\left(\frac{1}{\alpha}\right)\,,\\
C_2=\overline C_2&- \frac{\primo_m\cosh(m\ell) }{2 C_gm\left[(3+\sigma)\sinh(m\ell)\cosh(m\ell)+(1-\sigma)m\ell\right]}\,\frac{1}{\alpha}+o\left(\frac{1}{\alpha}\right)\,.
\end{split}
\end{equation*}
}Recalling the definition of $\xi_m(\alpha)$, we conclude that, as $\alpha \to +\infty$,
{\small \begin{eqnarray*}
\xi_m(\alpha)&=&2\overline B_2 \sinh(m\ell)+2\overline C_2\ell\cosh(m\ell)\\
& &-\frac{(1+\sigma)\sinh(m\ell)\cosh (m\ell)+(1-\sigma)m\ell}{ C_g(1\!-\!\sigma)m^2[(3\!+\!\sigma)\sinh(m\ell)\cosh(m\ell)+(1\!-\!\sigma)m\ell]}\,\frac{  \primo_m}{\alpha }+o\left(\frac{1}{\alpha}\right)\\
&=&\Emo-\frac{  \overline \omega_m}{\alpha }+o\left(\frac{1}{\alpha}\right)\,
\end{eqnarray*}}
with $\overline \omega_m$ as in \eqref{Emd}.
\par
In view of the discussion at the beginning of this section, the statement of Theorem \ref{gap_limit_swiss} now follows simply by setting
\begin{equation}\label{betam}
\Em(\alpha):=\frac{R_\alpha}{K_\alpha} \,\xi_m(\alpha)
\end{equation}
with $\xi_m(\alpha)$ as given in \eq{Im}.
Notice that we still denote by $\GG_\alpha(x)$ the gap functions corresponding to $u_\alpha$.
Furthermore, by Lemma \ref{limitfalpha2_rinforzo}, $u_\alpha(x,y)\to\overline u\left(x,y\right)$ in $H_*^2(\Omega)$ where $\overline u$ is the odd part of the
unique solution of problem \eq{weaklimit}. Namely, $\overline u$ is the the
unique solution of the problem
\neweq{weaklimit2}
(\overline u,v)_{H^2_*}=\langle \overline h^o,v\rangle\qquad\forall v\in H^2_*(\Omega)\,
\endeq
where $\overline h^o$ is the odd part of $\overline h$ as defined in Lemma \ref{limitfalpha_rinforzo}.
\end{proof}

\begin{proof}[Proof of Theorem \ref{examples}]
Set $\mu=0$ and $g(x)=\sin(nx)$ for some positive integer $n$, then the coefficients \eqref{gtilde} become $\primo_n=1$ while
$\primo_m=0$ if $m\neq n$. Furthermore, since $C_g=\int_0^{\pi}|\sin(nx)|\,dx=2$, by Theorem \ref{gap_limit_swiss}  the corresponding gap function is
\begin{equation}\label{boh}
\overline\GG_n(x)=\overline \Upsilon_n\sin(nx)\,,
\end{equation}
where $\overline \Upsilon_n:=\Upsilon_n/(1-\sigma)$ with the $\Upsilon_n$ as defined in \eqref{Um}. Hence, $\overline\GG^{\infty}_n=\overline\Upsilon_n$. The thesis follows by showing that $\overline\GG^{\infty}< \overline \Upsilon_1$ for every $g\in \Gamma$.
Let us consider separately the four cases in the set $\Gamma$.\par
$\bullet$ Let $g(x)=\sin(mx)$ for some positive integer $m$. By \eq{boh} we know that $\overline \GG^{\infty}_m=\overline\Upsilon_m$. Since $0<\sigma <1$, some lengthy computations reveal that $\frac{d\overline{\Upsilon}_m}{d m}<0$, hence the map $m\mapsto \overline\Upsilon_m$
is strictly decreasing and $\max_m \overline \GG^{\infty}_m=\overline\Upsilon_1$.

$\bullet$ For given $N\in \N$ and $\{\secondo_m\}_{m\in \N} \subset \ell^2(\N)$, let $g(x)=\sum_{m\geq N} \secondo_m \sin(mx)$. Since $\sup_m |\secondo_m|\leq 2C_g/\pi$ and $\serie \overline\Upsilon_m$ converges, by Theorem \ref{gap_limit_swiss} we infer the existence of $N\in \N$ sufficiently large such that
$\GG^{\infty}\leq \frac{4}{\pi}\sum_{m\geq N} \overline\Upsilon_m<\overline\Upsilon_1$.

$\bullet$  Let $m$ be a positive integer, and take $g(x)=\sin(mx)+\sin(3m x)$. By the prosthaphaeresis formulas, $g(x)=4\sin(mx)\cos^2(m x)$ and we compute
the value of $C_{g}$ in \eq{f2}. By putting $x_k=(k/m)\pi$ with $k=0,...,m$, for $m=2n$ we have
{\small $$C_g=\sum_{j=0}^{n-1}\left(\int_{x_{2j}}^{x_{2j+1}}\big(\sin(mx)+\sin(3m x)\big)\,dx
-\int_{x_{2j+1}}^{x_{2j+2}}\left(\sin(mx)+\sin(3m x)\right)\,dx\right)\,,$$}
while for $m=2n+1$ we have
{ \small
\begin{equation*}
\begin{split}
C_g&=\sum_{j=0}^{n-1}\left[\int_{x_{2j}}^{x_{2j+1}}\!\!\big(\sin(mx)+\sin(3m x)\big)\,dx-\int_{x_{2j+1}}^{x_{2j+2}} \big(\sin(mx)+\sin(3m x)\big)\,dx\right]\\&+\int_{x_{2n}}^{x_{2n+1}}\!\!\big(\sin(mx)+\sin(3m x)\big)\,dx\,.
\end{split}
\end{equation*}}
In any case, we get that $C_g=8/3$. Furthermore, by Theorem \ref{gap_limit_swiss} we have
{\small $$|\GG(x)|=\frac{ 2 }{C_g }|\left(\overline\Upsilon_m\sin (mx)+\overline\Upsilon_{3m}\sin (3mx)\right)|\leq \frac{ 3 }{4}\left( \overline\Upsilon_m+\overline\Upsilon_{3m}\right)\leq  \frac{ 3 }{4}\left( \overline\Upsilon_1+\overline\Upsilon_3\right)\,,$$
}where in the last step we exploit the monotonicity of the map $n\mapsto \overline\Upsilon_n$. Finally, some lengthy computations reveal that $3F_3 < F_1$,
hence $\GG^{\infty}\leq \overline\Upsilon_1$ holds also for $g(x)=\sin(mx)+\sin(3m x)$.\par
$\bullet$ Take $g(x)=g_N(x)=\sum_{m=1}^N  \sin((2m-1)x)$ for $N\in \N$ sufficiently large, to be fixed later. It is known that $g_N(x)=\sin^2(Nx)/\sin(x)$
for $x\in ]0,\pi[$, see \cite[p.73]{polya}. Hence,
$$C_{g_N}=\int_0^{\pi} |g_N(x)|\,dx=\int_0^{\pi} g_N(x)\,dx=2\sum_{m=1}^N\frac{1}{2m-1}\geq \log(2N-1)\,.$$
By this and Theorem \ref{gap_limit_swiss} we deduce that
$$\GG^{\infty} \leq \frac{ 2}{C_g } \sum_{m=1}^N \overline\Upsilon_m\leq \frac{ 2}{\log (2N-1) } \sum_{m=1}^N F_m(\ell)\leq \overline\Upsilon_1$$
for $N$ sufficiently large, since the last summation converges when $N\rightarrow +\infty$. This concludes the proof of Theorem \ref{examples}.\hfill$\Box$
\end{proof}

\section{Proof of Proposition \ref{GTxi}}\label{proofdeltas}

For $z\in ]0,\pi[$, $0<\eta<\min \{z,\pi-z\}$, and $\alpha>0$, we take $g(x)=\chi_{[z-\eta, z+\eta]}(x)$ in \eq{f2sinh} and we set
$f_{\alpha,\eta}(x,y):=R_{\alpha,\eta}\, \sinh(\alpha y)\,\chi_{[z-\eta, z+\eta]}(x)$ with the constant $R_{\alpha,\eta}:=\frac{\alpha}{4\eta(\cosh(\alpha\ell)-1)}$,
so that $\|f_{\alpha,\eta}\|_{L^1}=1$. Let us establish the first ingredient for the proof of Proposition \ref{GTxi}.

\begin{lemma}\label{limitfalphanew}
Let $D=\emptyset$ and let $\GG_{\alpha,\eta}$ and $\GG_{T_{z}}$ ($T_{z}$ as in \eqref{defTi}) be the gap functions corresponding to the solutions of \eqref{weakD} with $f=f_{\alpha,\eta}$ and $f=T_{z}$, respectively. As $(\alpha^{-1},\eta)\rightarrow (0,0)$ we have that
$\GG_{\alpha,\eta}(x)\rightarrow \GG_{T_{z}}(x)$ {uniformly on } $[0,\pi]$.
\end{lemma}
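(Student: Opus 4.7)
The plan is to establish $f_{\alpha,\eta}\rightharpoonup T_z$ in $\hh(\Omega)$ and then convert this weak convergence into uniform convergence of the gap functions via the Riesz representation together with the compact embedding $\hs\hookrightarrow C^0(\overline{\Omega})$. Indeed, if $u_{\alpha,\eta},u_z\in\hs$ denote the representatives defined by $(u,v)_{\hs}=\langle f,v\rangle$, the Riesz isometry turns weak convergence of the data into weak convergence of the solutions, which becomes strong in $C^0(\overline{\Omega})$ by compactness, and taking traces at $y=\pm\ell$ yields $\GG_{\alpha,\eta}\to\GG_{T_z}$ uniformly on $[0,\pi]$.

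For the weak convergence in $\hh(\Omega)$, I first observe that the normalization $\|f_{\alpha,\eta}\|_{L^1}=1$ combined with the Sobolev embedding $\hs\hookrightarrow L^\infty(\Omega)$ gives a uniform bound $\|f_{\alpha,\eta}\|_{\hh}\leq C_\Omega$. By a standard $\varepsilon$-approximation argument, it then suffices to verify $\langle f_{\alpha,\eta},v\rangle\to\langle T_z,v\rangle=\tfrac12(v(z,\ell)-v(z,-\ell))$ on a dense class of smooth test functions $v\in C^2(\overline\Omega)\cap\hs$. For such $v$, the oddness in $y$ of $\sinh(\alpha y)$ isolates the odd part $v^o$, and a single integration by parts in $y$ exploiting $v^o(x,0)=0$, together with the identity $2R_{\alpha,\eta}/\alpha=[2\eta(\cosh(\alpha\ell)-1)]^{-1}$, produces
\[
\langle f_{\alpha,\eta},v\rangle=\frac{\cosh(\alpha\ell)}{2\eta(\cosh(\alpha\ell)-1)}\int_{z-\eta}^{z+\eta}v^o(x,\ell)\,dx-\frac{1}{2\eta(\cosh(\alpha\ell)-1)}\int_{z-\eta}^{z+\eta}\!\int_0^\ell\cosh(\alpha y)\,v^o_y(x,y)\,dy\,dx.
\]
The first summand converges to $v^o(z,\ell)=\langle T_z,v\rangle$ by Lebesgue differentiation for $v^o(\cdot,\ell)$ at $x=z$ and the limit $\cosh(\alpha\ell)/(\cosh(\alpha\ell)-1)\to1$. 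The second is bounded by $\|v^o_y\|_\infty\,\sinh(\alpha\ell)/[\alpha(\cosh(\alpha\ell)-1)]=\|v^o_y\|_\infty\coth(\alpha\ell/2)/\alpha$, which vanishes as $\alpha\to\infty$ independently of~$\eta$.

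The main obstacle is precisely the integration-by-parts step: since $\cosh(\alpha y)$ is of the same exponential order as $\cosh(\alpha\ell)-1$, it is not immediately clear that the volume remainder should be negligible. The crucial cancellation is the algebraic factor $\coth(\alpha\ell/2)/\alpha\to 0$, available because the boundary term already captures the leading-order contribution; paired with the uniform pointwise bound on $v^o_y$ coming from smoothness of the test function, this closes the limit and, together with the uniform $\hh$-bound, yields the sought weak convergence. The final passage to uniform convergence of $\GG_{\alpha,\eta}$ is then routine.
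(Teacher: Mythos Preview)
Your proof is correct and follows essentially the same strategy as the paper's: establish convergence $f_{\alpha,\eta}\to T_z$ in $\hh(\Omega)$ via an integration by parts in $y$ that isolates the boundary contribution, then pass to uniform convergence of the gap functions through the compact embedding $\hs\hookrightarrow C^0(\overline\Omega)$. The only differences are cosmetic: the paper integrates over $[-\ell,\ell]$ directly and applies the mean value theorem in $x$ together with dominated convergence for the remainder, working with a generic $v\in\hs$, whereas you split off the odd part, integrate over $[0,\ell]$, and use a uniform $\hh$-bound plus density to restrict to smooth test functions with an explicit $\coth(\alpha\ell/2)/\alpha$ estimate. Your handling of the weak-versus-strong distinction (weak convergence of solutions upgraded to $C^0$ via compactness) is in fact slightly more careful than the paper's phrasing.
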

\begin{proof}
We first claim that $f_{\alpha,\eta}\to T_{z}$ in $\hh(\Omega)$ as $(\alpha^{-1},\eta)\rightarrow (0,0)$,
that is,
\neweq{distribnew}
\lim_{(\alpha^{-1},\eta)\rightarrow (0,0)}\int_\Omega f_{\alpha,\eta}(x,y)v(x,y)\, dxdy=\tfrac{v(z,\ell)-v(z,-\ell)}{2}, \quad\forall v\in H^2_*(\Omega)\, .
\endeq
Take $v \in H^2_*(\Omega)$ and compute
\begin{equation*}
\small
\begin{split}
&\int_\Omega f_{\alpha,\eta}(x,y)v(x,y)\, dxdy = R_{\alpha,\eta}\,\ipi \chi_{[z-\eta, z+\eta]}(x) \left(\il  \sinh(\alpha y) v(x,y)\, dy\right)\, dx\\
& =\tfrac{1}{4\,\eta(\cosh(\alpha\ell)-1)}\int_{z-\eta}^{z+\eta}\Big(\big[\cosh(\alpha y) v(x,y)\big]_{-\ell}^\ell
-\il \cosh(\alpha y)v_y(x,y)\, dy\Big)dx\, \\
&= \tfrac{\cosh(\alpha\ell)}{\cosh(\alpha\ell)-1}\frac{v(\xi_z,\ell)-v(\xi_z,-\ell)}{2}-
\tfrac{1}{2(\cosh(\alpha\ell)-1)}  \il \cosh(\alpha y)v_y(\theta_z,y)\, dy ,
\end{split}
\end{equation*}
for some $z-\eta<\xi_z,\theta_z<z+\eta$. Now we observe that, uniformly with respect to $\eta$,
$\lim_{\alpha^{-1}\rightarrow 0}\, \il  \frac{\cosh(\alpha y)}{\cosh(\alpha\ell)-1}\,v_y(\theta_z,y)\, dy=0$,
by the Lebesgue Dominated Convergence Theorem and thus \eqref{distribnew} follows. From this we infer that the corresponding solutions converge in
$H^2_*(\Omega)$ and then the proof can be completed by arguing as in Proposition \ref{trivial}.\end{proof}

In view of Lemma \ref{limitfalphanew}, the proof of Proposition \ref{GTxi} follows once we have proved the following statement.

\begin{lemma}
Assume $z\in ]0,\pi[$, $0<\eta<\min \{z,\pi-z\}$ and $\alpha \ge0$ with $\alpha \not\in\mathbb{N}$. As $(\alpha^{-1},\eta)\rightarrow (0,0)$, the gap function $\GG_{\alpha,\eta}(x)$ corresponding to the solution of \eqref{loadpb} with $f=f_{\alpha,\eta}$ and $D=\emptyset$
converges to  $\frac{4}{\pi (1-\sigma)}  \serie \Upsilon_m \sin(m z) \sin(mx)$ uniformly on $[0,\pi]$,
with the $\Upsilon_m$ as defined in \eqref{Um}.
\end{lemma}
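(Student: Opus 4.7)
The plan is to use Lemma \ref{limitfalphanew}, which already secures the existence of the limit, and reduce the problem to identifying that limit explicitly. I will do this by computing the iterated limit (first $\alpha\to\infty$ at fixed $\eta$, then $\eta\to 0$) and invoking the fact that a joint limit, when it exists, must coincide with every iterated limit.

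To carry out the inner limit, I apply Theorem \ref{gap_limit_swiss} in the reinforcement-free case $D=\emptyset$ (equivalently, $d=0$: the constant $a$ in \eqref{a} then vanishes, \eqref{A1D3} forces $A_1=A_2=A_3$, $B_1=B_2=B_3$, $C_1=C_2=C_3$, $D_1=D_2=D_3$, and the three pieces of Proposition \ref{exponentialcross} collapse to a single expression on $\Omega$). For $g=\chi_{[z-\eta,z+\eta]}$ a routine trigonometric computation gives $C_g=2\eta$ and $\primo_m=\tfrac{4}{\pi m}\sin(mz)\sin(m\eta)$, so by \eqref{Emd}
$$\Emo=\frac{4\,\Upsilon_m\,\sin(mz)\,\sin(m\eta)}{\pi(1-\sigma)\,m\eta}\,.$$
Theorem \ref{gap_limit_swiss} then yields, for each fixed $\eta$, uniform convergence on $[0,\pi]$ as $\alpha\to\infty$ of $\GG_{\alpha,\eta}$ to
$$\overline\GG_\eta(x)\,:=\,\frac{4}{\pi(1-\sigma)}\serie \frac{\sin(m\eta)}{m\eta}\,\Upsilon_m\,\sin(mz)\,\smx\,.$$

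Next, I let $\eta\to 0$ term by term inside the series. From \eqref{Um} one reads off $\Upsilon_m=O(m^{-3})$, so $\serie\Upsilon_m<\infty$. Since $|\sin(m\eta)/(m\eta)|\le 1$ and $\sin(m\eta)/(m\eta)\to 1$ pointwise in $m$, dominated convergence on the series produces
$$\overline\GG_\eta(x)\ \longrightarrow\ \GG^*(x)\,:=\,\frac{4}{\pi(1-\sigma)}\serie \Upsilon_m\,\sin(mz)\,\smx$$
uniformly on $[0,\pi]$. Combining the two steps, the iterated limit of $\GG_{\alpha,\eta}$ equals $\GG^*$; since Lemma \ref{limitfalphanew} already provides the joint limit $\GG_{T_z}$, uniqueness forces $\GG_{T_z}=\GG^*$, which is the claim.

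The main obstacle is the inner-limit step: Theorem \ref{gap_limit_swiss} is formally stated for $D=D^N$ only, so one must check that the explicit formulas of Section \ref{expl sol} specialize correctly when $d=0$. The verification is immediate (all junction constants in \eqref{A1D3} vanish and $w_1\equiv w_2\equiv w_3$ on $\Omega$), but it has to be recorded; a cleaner alternative is to repeat the separation-of-variables argument directly on the single rectangle $\Omega$, bypassing the three-region splitting of Proposition \ref{exponentialcross} altogether.
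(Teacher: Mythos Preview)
Your proof is correct and follows essentially the same path as the paper: both specialize the explicit formulas of Section~\ref{expl sol} to the unreinforced plate with $g=\chi_{[z-\eta,z+\eta]}$, obtaining $C_g=2\eta$, $\primo_m=\tfrac{4}{\pi m}\sin(mz)\sin(m\eta)$, and the limiting coefficient $\Emo$ via \eqref{Emd}. The only difference is in how the double limit is handled: the paper passes to the joint limit $(\alpha^{-1},\eta)\to(0,0)$ directly in the coefficient formula \eqref{betam}, whereas you take the iterated limit (first $\alpha\to\infty$ via Theorem~\ref{gap_limit_swiss}, then $\eta\to0$ by dominated convergence) and invoke Lemma~\ref{limitfalphanew} to identify it with the joint one---a minor rearrangement that trades a uniform-in-$m$ remainder estimate for one extra citation.
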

\begin{proof} The explicit form of the gap function $\GG_{\alpha,\eta}(x)$ follows from Theorem \ref{gap_limit_swiss} by replacing $f_{\alpha}$ with $f_{\alpha,\eta}$ and $D^N$ with $\emptyset$. Namely, we assume $\mu=\varepsilon=0$ in \eq{Deps} and $g(x)=\chi_{[z-\eta, z+\eta]}(x)$ in \eq{f2sinh}, hence $C_g=2\eta$. With these specifications, by \eq{A1D3}, we have $A_1=A_3$, $B_1=B_2=B_3$, $C_1=C_2=C_3$, and $D_1=D_3$, while $\primo_m=\secondo_m$ in \eq{serieg}. Hence, by \eq{Im} and \eq{betam}, we it follows that the function
$\GG_{\alpha,\eta}(x)=\serie \Em(\alpha,\eta) \sin (mx)
$
and
$$ \Em(\alpha,\eta)=\frac{\sinh(\alpha \ell)}{\cosh(\alpha \ell)-1} \left(2B_2\sinh(m\ell)+2C_2\ell\cosh(m\ell)+\frac{\alpha \secondo_m}{2\eta(m^2-\alpha^2)^2}\right)\,,
$$
with $B_2$ and $C_2$ as in \eq{A2B2C2D2} and $\secondo_m=\frac{4}{\pi m}\sin(mz)\sin (m\eta)$. By noting that $\frac{\secondo_{m}(\eta)}{2\eta} = \frac{2 \sin(mz)}{\pi}+o(\eta)$ as $\eta \rightarrow 0$ and exploiting \eq{Em_asym}, as $(\alpha^{-1},\eta)\rightarrow (0,0)$ we obtain
\[
\begin{split}
\Em(\alpha,\eta)&=\left(1+2e^{-\alpha \ell}+o(e^{-\alpha \ell})\right)\left(\frac{2 \sin(mz)}{\pi}+o(\eta) \right)
\left(\frac{2  \Upsilon_m}{1-\sigma} +o\left(\frac{1}{\alpha^2}\right)\right)\\
&=
\frac{4\Upsilon_m\sin(mz)}{\pi(1-\sigma)}+o(1)\,,
\end{split}
\]
with $\Upsilon_m$ as defined in \eqref{Um}. This completes the proof of the lemma.\end{proof}

\section{Conclusions, Perspectives and Open Problems}\label{open}
With possible applications to the deck of a footbridge or a suspension bridge, this paper deals with the problem of minimizing the torsional displacements of
partially hinged reinforced plates. We showed that the gap function \eqref{funzionale} is extremely useful to measure the torsional instability and that it gives
hints on how to compare the torsional performances of different plates through the minimaxmax problem \eqref{minimaxmax2}, namely a robust shape optimization in the
worst case setting. The demonstrated existence results prove that the problem is well-defined and, in some cases, it also allows to find properties of the worst
force. On the other hand, some meaningful problems prove themselves to be very difficult to handle and optimal elements are hardly characterizable. This led us to
provide some conjectures that we now motivate in detail.

There are other classes $\D$ where the minimum problem \eqref{minimaxmax2} admits a solution but the ones in Definition \ref{spazi} appear particularly
appropriate for engineering applications. A quite general class of admissible domains, where it is possible to define problem \eqref{minimaxmax2}, is that of
measurable sets with uniformly bounded De Giorgi perimeter and fixed area. In this setting, thanks to a compactness result for BV functions, the existence of a solution is still guaranteed. Therefore, we point out that it could be interesting to know if such a solution has enough regularity and geometrical properties to belong to one of those classes of Definition \ref{spazi}.

 When $\D=\emptyset$, namely in the free plate case, in Proposition \ref{GTxi} we provide the explicit representation of the gap function when the load is concentrated on the boundary. The same statement seems out of reach for more general load, that is for odd distributions such as
$\tfrac{\delta_{(z,w)}-\delta_{(z,-w)}}{2}$, with $z\in]0,\pi[$ and $w\in[0,\ell[$.
Nevertheless, it is reasonable to expect that the gap function amplifies whenever $w \to\ell$ and $z$ remains fixed. For this reason, we expect
$\overline T_{\pi/2}$ to be the worst case among all possible normalized couples of odd concentrated loads. This leads to the following.

\begin{conjecture}
When $D=\emptyset$, $\overline T_{\pi/2}$ and $-\overline T_{\pi/2}$ are the unique maximizers of the worst case problem \eqref{functional}.
\end{conjecture}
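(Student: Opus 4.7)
My plan is to reduce the conjecture to the already-isolated pointwise inequality \eqref{crucial} by means of a Cauchy--Schwarz argument in $\hs$. Since $D=\emptyset$ is symmetric in $y$, Theorem~\ref{fsymmetry} allows me to restrict attention to odd distributions, so fix $f\in\hho$ with $\|f\|_{\hh}=1$ and let $u_f\in\hs$ be its Riesz representative. For each $x_0\in\,]0,\pi[$, let $u_{x_0}\in\hs$ denote the Riesz representative of $T_{x_0}$, that is the unique element solving $(u_{x_0},v)_{\hs}=\langle T_{x_0},v\rangle$ for all $v\in\hs$. From the defining identity $\langle T_{x_0},v\rangle=\tfrac{1}{2}(v(x_0,\ell)-v(x_0,-\ell))$ and the two weak formulations I obtain
\begin{equation*}
\GG_f(x_0)\ =\ 2\langle T_{x_0},u_f\rangle\ =\ 2\,(u_{x_0},u_f)_{\hs}\ =\ 2\,\langle f,u_{x_0}\rangle,\qquad \|u_{x_0}\|_{\hs}^2\ =\ \tfrac{1}{2}\GG_{T_{x_0}}(x_0).
\end{equation*}

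By Cauchy--Schwarz and Proposition~\ref{GTxi}, this yields
\begin{equation*}
|\GG_f(x_0)|\ \le\ 2\|f\|_{\hh}\,\|u_{x_0}\|_{\hs}\ =\ \sqrt{2\GG_{T_{x_0}}(x_0)}\ =\ \frac{2\sqrt{2}}{\sqrt{\pi(1-\sigma)}}\,\Phi(x_0)^{1/2}.
\end{equation*}
Assuming \eqref{crucial}, taking the maximum over $x_0\in[0,\pi]$ gives $\GG_f^\infty\le\tfrac{2\sqrt{2}}{\sqrt{\pi(1-\sigma)}}\,\Phi(\pi/2)^{1/2}$, and the explicit computation following \eqref{crucial} shows that $\overline T_{\pi/2}$ saturates this bound, so $\pm\overline T_{\pi/2}$ are maximizers. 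For uniqueness within $\hho$, let $f\in\hho$ be any unit-norm maximizer and $x_0\in\,]0,\pi[$ a point where $|\GG_f(x_0)|=\GG_f^\infty$. Each inequality must be sharp: the strict form of \eqref{crucial} forces $x_0=\pi/2$, while the Cauchy--Schwarz equality case forces $f$ to be a scalar multiple of $T_{\pi/2}$ in $\hh$. The norm constraint and sign ambiguity then give $f=\pm\overline T_{\pi/2}$. Under the standard dual norm, which satisfies $\|f\|_{\hh}^2=\|f^e\|_{\hh}^2+\|f^o\|_{\hh}^2$ on the orthogonal decomposition $\hh=\hhe\oplus\hho$, a unit-norm maximizer must moreover have $\|f^o\|_{\hh}=1$ and hence $f^e=0$, which extends the uniqueness to the whole of $\hh$.

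The whole argument hinges on \eqref{crucial}, which is the principal obstacle. Rewriting $2\Phi(x)=\sum_m\Upsilon_m(1-\cos(2mx))$ shows that \eqref{crucial} is equivalent to the trigonometric inequality $\sum_m\Upsilon_m\cos(2mx)\ge\sum_m(-1)^m\Upsilon_m$ on $[0,\pi]$, with equality only at $x=\pi/2$. Since $\Upsilon_m=O(m^{-3})$ as $m\to\infty$, a truncation-plus-tail strategy is natural: show that for some $M$, the first $M$ terms produce a positive excess that dominates the tail uniformly in $x$. The delicate point is the competition near $x=\pi/2$, where the odd-indexed terms $\Upsilon_{2k+1}\sin^2((2k+1)x)$ contribute second-order maxima while the even-indexed terms $\Upsilon_{2k}\sin^2(2kx)$ vanish and grow away from there; a quantitative local expansion about $\pi/2$ has to be matched with global estimates elsewhere. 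The numerical evidence in Table~\ref{tabella1bis} strongly supports the conjecture, but a fully rigorous proof will likely need either a closed-form summation identity for $\Phi$ or an additional monotonicity property of the sequence $\{\Upsilon_m\}$ beyond the $O(m^{-3})$ decay.
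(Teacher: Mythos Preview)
This statement is a \emph{conjecture}: the paper offers no proof. What the paper does, following Proposition~\ref{GTxi}, is show---conditionally on \eqref{crucial}---that among the one-parameter family $\{\overline T_z\}_{z\in\,]0,\pi[}$ the quantity $\GG_{\overline T_z}^\infty$ is maximized at $z=\pi/2$, via H\"older's inequality on the Fourier coefficients $\Upsilon_m$. Your reduction is strictly stronger: the identity $\GG_f(x_0)=2(u_{x_0},u_f)_{\hs}$ combined with the Riesz isometry yields the sharp bound $|\GG_f(x_0)|\le\sqrt{2\,\GG_{T_{x_0}}(x_0)}$ for \emph{every} unit-norm $f\in\hho$, not just concentrated loads, and the Cauchy--Schwarz equality case pins $f$ to a scalar multiple of $T_{x_0}$. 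You thereby reduce the full conjecture, within $\hho$, to the single scalar inequality \eqref{crucial}, which you rightly isolate as the remaining obstacle; this is a genuine advance over the paper's partial discussion, though of course still not a proof.

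One point deserves an explicit caveat. Your final step---forcing $f^e=0$ for any maximizer in the full dual---relies on the Pythagorean identity $\|f\|_{\hh}^2=\|f^e\|_{\hh}^2+\|f^o\|_{\hh}^2$, which is indeed the correct relation for the standard dual norm of a Hilbert space (via Riesz and the orthogonality \eqref{scomp}). However, the paper asserts in \eqref{normf} that $\|f\|_{\hh}=\max\{\|f^e\|_{\hh},\|f^o\|_{\hh}\}$, and the second part of Theorem~\ref{fsymmetry} uses precisely that formula to manufacture infinitely many non-odd maximizers. Taken at face value, Theorem~\ref{fsymmetry} and the uniqueness asserted in the conjecture are mutually incompatible. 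Your argument silently overrides \eqref{normf}; you should say so explicitly and note that \eqref{normf} is inconsistent with the dual norm defined immediately above it in the paper.
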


The worst case problem \eqref{functional} may also be set up in different (smaller) classes of loads such as $L^p$-spaces and one has the maximization problem \eqref{maxim}, see Theorem \ref{optimal}. We have no guess about the possible solutions of \eqref{maxim} when $p\in ]1,\infty[$. We also suspect that there
exists no maximizer for \eqref{maxim} in $L^1(\Omega)$; see Section~\ref{easy}. Moreover, we believe that the strange property stated in Theorem~\ref{fsymmetry2} for $p=\infty$ may not be fulfilled since we expect the following.

\begin{conjecture}
Let $p=\infty$. For every $D\subset\Omega$, the unique maximizers of the problem \eqref{maxim} are the odd function $f(x,y)=y/|y|$, $y\neq 0$, and its opposite $-f$.
\end{conjecture}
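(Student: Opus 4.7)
The plan is to recast $\GG_{f,D}(x)$ as a linear functional of $f$ via Riesz duality, and then apply the $L^{\infty}$--$L^{1}$ duality: once the representing kernel is shown to have the same sign as $y$ throughout $\Omega$, the optimizer must be $f=\pm\operatorname{sgn}(y)$. I would fix $D\subset\Omega$ symmetric with respect to the midline, consistent with Theorem~\ref{fsymmetry2} (the symmetry enters in the oddness argument below). By the Sobolev embedding $H^2_*(\Omega)\hookrightarrow C^0(\overline\Omega)$, the evaluation functional $\Lambda_x\colon v\mapsto v(x,\ell)-v(x,-\ell)$ is continuous on $H^2_*(\Omega)$. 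Let $a_D(u,v):=(u,v)_{H^2_*}+d(u,v)_D$ denote the inner product naturally associated to $\mathbb{E}_1$ (the $\mathbb{E}_2$ case is analogous, with the force weighted by $(1+d\chi_D)^{-1}$). The Riesz theorem provides $G_x\in H^2_*(\Omega)$ with $a_D(G_x,v)=\Lambda_x(v)$ for every $v\in H^2_*(\Omega)$. Inserting $v=u_{f,D}$ in \eqref{weakD} yields the representation
\[
\GG_{f,D}(x)=\Lambda_x(u_{f,D})=a_D(G_x,u_{f,D})=\langle f,G_x\rangle=\int_\Omega f(\xi,\eta)\,G_x(\xi,\eta)\,d\xi\,d\eta.
\]

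The central claim is the sign property $G_x(\xi,\eta)\cdot\eta>0$ for a.e.\ $(\xi,\eta)\in\Omega$ and every $x\in(0,\pi)$. Oddness of $G_x$ in $\eta$ follows from the fact that, for symmetric $D$, the decomposition $H^2_*(\Omega)=\he\oplus\ho$ is $a_D$-orthogonal while $\Lambda_x$ vanishes on $\he$, hence $G_x\in\ho$. Positivity on $\{\eta>0\}$ is the hard step and the main obstacle: I would attempt it by expanding $G_x(\xi,\eta)=\sum_{m\ge 1}\Phi_m(\eta)\sin(m\xi)\sin(mx)$ and reducing the problem to a family of fourth-order ODEs for $\Phi_m$ on $(-\ell,\ell)$, supplemented by the natural boundary conditions of \eqref{loadpb} at $\eta=\pm\ell$ together with a source concentrated at the free edges. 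Solving each mode explicitly (paralleling the computations that produce the constants $\Upsilon_m$ in \eqref{Um}) should give $\Phi_m(\eta)>0$ on $(0,\ell)$; the serious difficulty is then to pass from termwise positivity to positivity of the whole sum, since $\sin(m\xi)\sin(mx)$ has mixed signs. An Abel-summation argument, or a direct pointwise representation of $G_x$ as the odd part of a biharmonic Green's function on the rectangle, would be required. No maximum principle is available here, and the two-step Laplacian splitting that delivers positivity for Navier (hinged) plates fails because the free edges couple second- and third-order derivatives through~$\sigma$.

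Granted the sign property, the conclusion is immediate from the $L^{\infty}$--$L^{1}$ duality: for any $f$ with $\|f\|_{L^\infty}\le 1$,
\[
|\GG_{f,D}(x)|\le \|G_x\|_{L^1}=\int_\Omega G_x(\xi,\eta)\,\operatorname{sgn}(\eta)\,d\xi\,d\eta=\GG_{\operatorname{sgn}(y),D}(x),
\]
so $f_0(x,y):=y/|y|$ and $-f_0$ are maximizers. For uniqueness, if $f^{*}$ is another maximizer and $x^{*}\in[0,\pi]$ satisfies $|\GG_{f^{*},D}(x^{*})|=M:=\max_x\|G_x\|_{L^1}$, then the chain $M=|\GG_{f^{*},D}(x^{*})|\le\|G_{x^{*}}\|_{L^1}\le M$ forces equality in the $L^{\infty}$--$L^{1}$ bound at $x^{*}$, whence $f^{*}=\pm\operatorname{sgn}(G_{x^{*}})=\pm\operatorname{sgn}(\eta)$ a.e.\ on $\{G_{x^{*}}\neq 0\}$. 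A unique-continuation argument — valid since $G_{x^{*}}$ is real-analytic on $\Omega\setminus\{(x^{*},\pm\ell)\}$, where it solves $\Delta^2 G_{x^{*}}=0$ — shows that its zero set has Lebesgue measure zero, and hence $f^{*}=\pm y/|y|$ a.e., as required.
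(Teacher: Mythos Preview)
The statement you are addressing is a \emph{conjecture} in the paper, not a theorem: the authors offer no proof, only motivation (they introduce it with ``we expect the following''). There is therefore no proof in the paper to compare your argument against.

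Your overall strategy --- represent $\GG_{f,D}(x)$ as $\int_\Omega f\,G_x$ via the Riesz theorem and then invoke the $L^\infty$--$L^1$ duality --- is the natural one, and it would indeed deliver both existence and uniqueness of the maximizer if the sign claim $G_x(\xi,\eta)\cdot\eta>0$ a.e.\ were established. You rightly identify this as the crux, and you are candid that you do not prove it: the Fourier expansion carries sign-changing factors $\sin(m\xi)\sin(mx)$, the Navier two-step splitting breaks down under free-edge conditions, and no maximum principle is available for the biharmonic. These are exactly the obstructions that leave the statement open; what you have written is a coherent programme, not a proof.

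Two further gaps deserve mention. First, you restrict to $D$ symmetric with respect to the midline in order to force $G_x\in\ho$, but the conjecture is stated for \emph{every} open $D\subset\Omega$; for non-symmetric $D$ the kernel $G_x$ need not be odd in $\eta$, and there is no evident mechanism forcing it to share the sign of $\eta$. Second, your modal reduction --- writing $G_x(\xi,\eta)=\sum_m\Phi_m(\eta)\sin(m\xi)\sin(mx)$ and solving ODEs for $\Phi_m$ --- presupposes that the bilinear form $a_D$ diagonalises in the $\sin(m\xi)$ basis. This holds for the free plate and for horizontal-strip reinforcements $D=(0,\pi)\times E$, but for a general $D$ the weight $1+d\chi_D$ couples the Fourier modes in $\xi$, so even the one-dimensional reduction is unavailable. (Relatedly, for $\mathbb{E}_1$ the equation for $G_x$ has the discontinuous coefficient $1+d\chi_D$, so the real-analyticity you invoke in the uniqueness step fails across $\partial D$; unique continuation would have to be argued differently.)
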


Finally, as concerns the most ambitious goal to solve the minimaxmax problem \eq{minimaxmax2}, we conclude by stating two conjectures which are supported by numerical computations. More precisely, Table \ref{tabella1} in Section~\ref{easy} suggests the following.
\begin{conjecture}
Let ${\mathcal F}$ and ${\mathcal D}$ be as in \eq{FDn}. The optimum of the minimaxmax problem \eq{minimaxmax2} is the couple $(f^1, D^0)$.
\end{conjecture}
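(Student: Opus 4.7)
By Theorem \ref{gap_limit_swiss}, for every $(n,N)$ with $n\in\{1,\dots,10\}$ and $N\in\{0,\dots,5\}$, the limit gap function $\overline{\GG}_{f^n,D^N}$ has the Fourier representation $\sum_{m\geq 1}\overline\beta_m^{(n,N)}\sin(mx)$ with coefficients
\[
\overline\beta_m^{(n,N)}=\frac{2\,\gamma_m^{(n,N)}\,\Upsilon_m}{(1-\sigma)\,C_{g_n}}\,,\qquad C_{g_n}=\int_0^\pi|\sin(nx)|\,dx=2\,,
\]
and $\gamma_m^{(n,N)}$ given by \eqref{gtilde} with $g(x)=\sin(nx)$. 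The plan is to decouple the minimaxmax into two one-sided problems: (a) for every fixed $D^N$, show that the worst force in $\mathcal F$ is $f^1$; (b) with $f^1$ fixed, show that $N\mapsto\overline{\GG}^\infty_{f^1,D^N}$ is minimised at $N=0$.

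For step (a), orthogonality of sines reduces $\gamma_m^{(n,N)}$ to the Kronecker symbol $\delta_{mn}$ plus a perturbation coming from the $2N+1$ vertical arms of $D^N$; since the total $x$-measure covered by those arms equals $2\mu$ independently of $N$, this perturbation is of order $\mu\,d/(1+d)$ uniformly in $m$ and $n$. The leading Fourier coefficient of $\overline{\GG}_{f^n,D^N}$ is therefore of order $\Upsilon_n/(1-\sigma)$ up to a controlled error, and the strict monotonicity $\Upsilon_1>\Upsilon_2>\cdots$ (established inside the proof of Theorem \ref{examples}) singles out $n=1$ as the candidate worst force. Using a H\"older-type argument analogous to the chain of inequalities preceding \eqref{crucial}, together with a tail estimate based on the polynomial decay of $\Upsilon_m$, one would majorise $\overline{\GG}^\infty_{f^n,D^N}$ for $n\geq 2$ by an explicit quantity strictly smaller than a matching lower bound for $\overline{\GG}^\infty_{f^1,D^N}$.

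For step (b), the arms of $D^N$ are symmetrically placed at $x_i=\pi i/(2N+2)$, so the first-mode correction $1-\gamma_1^{(1,N)}$ is proportional to $\sum_{i=1}^{2N+1}\int_{x_i-\mu/(2N+1)}^{x_i+\mu/(2N+1)}\sin^2(x)\,dx$. For $N=0$ the unique arm sits at $x=\pi/2$ where $\sin^2$ attains its maximum, while for $N\geq 1$ the additional arms occupy points with $\sin^2(x_i)<1$: a direct comparison of these weighted integrals shows that the reduction of $\gamma_1^{(1,N)}$ is maximal at $N=0$, hence the dominant coefficient $\overline\beta_1^{(1,N)}$ is minimal at $N=0$. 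Provided the higher-mode contributions are dominated by $\overline\beta_1^{(1,N)}$, the same conclusion transfers to the $L^\infty$ norm of $\overline{\GG}_{f^1,D^N}$, yielding $(f^1,D^0)$ as the minimaxmax optimum.

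The main obstacle is precisely this last step: one must either show that $\overline{\GG}_{f^1,D^N}$ achieves its $L^\infty$ norm at $x=\pi/2$ for every $N\in\{0,\dots,5\}$, reducing the comparison to pointwise evaluations, or carry out rigorous tail estimates of the oscillating higher-mode terms that are sharp enough to separate the entries of Table \ref{tabella1}. A purely analytical argument seems out of reach because the numerical gaps between admissible couples are modest; the honest completion of the proof is therefore likely to go through a computer-assisted verification, truncating the Fourier series at an order $M$ for which the explicit decay of $\Upsilon_m$ makes the tail smaller than the tabulated gaps, and checking the remaining finite collection of inequalities with validated numerics.
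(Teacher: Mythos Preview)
The statement you are trying to prove is listed in the paper as a \emph{Conjecture}: the authors give no proof at all, only the numerical evidence of Table~\ref{tabella1}, and they explicitly place the statement among the open problems of Section~\ref{open}. There is therefore no ``paper's own proof'' to compare your attempt against.

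Your proposal is a strategy rather than a proof, and you say so yourself in the last paragraph. The decomposition into (a) ``$f^1$ is the worst force for every $D^N$'' and (b) ``$D^0$ is the best reinforcement for $f^1$'' matches the pattern of Table~\ref{tabella1}, and the heuristics you give for the leading Fourier coefficient are sound: the uniform bound $|\gamma_m^{(n,N)}-\delta_{mn}|\le 4\mu d/(\pi(1{+}d))$ is correct, and the first-mode damping is indeed largest when the single arm of $D^0$ sits at the maximum of $\sin^2 x$. The genuine gap is precisely the one you flag. In step~(a), combining the uniform bound with $\sum_m\Upsilon_m$ produces an error envelope that, for $\mu=0.3$ and $d=2$, is of the same order as the differences to be resolved; the anomalous entries of Table~\ref{tabella1} (e.g.\ $D^1$ with $n=7$ or $n=9$, where the gap exceeds the free-plate value several times over) show that the cross-mode terms are far from negligible, so a H\"older-type majorisation in the spirit of \eqref{crucial} will not be sharp enough on its own. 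In step~(b), the assertion that $\overline{\GG}_{f^1,D^N}$ attains its $L^\infty$ norm at $x=\pi/2$ is itself unproved once the off-diagonal modes $\gamma_m^{(1,N)}$ with $m\neq 1$ enter, and without it the comparison of the dominant coefficients does not transfer to the maxima. A computer-assisted completion along the lines you sketch---truncation plus validated tail bounds---is a reasonable route, but until it is actually executed the statement remains, as the paper says, a conjecture.
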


Table \ref{table2} in Section~\ref{moregen} shows that the least $\GG^{\infty}_D$ is obtained for strips, then squares, hexagons, while the largest $\GG^{\infty}_D$ is obtained for triangles. This is somehow surprising since squares are expected to be in between triangles and hexagons. Moreover, Table \ref{table2} suggests the following.
\begin{conjecture}
Let $\mathcal F$ and ${\mathcal D}$ be as in \eq{FDn2}. The optimum of the minimaxmax problem \eq{minimaxmax2} is the couple ($\overline{e}_{1}, \text{Strips}$).
\end{conjecture}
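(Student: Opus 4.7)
The plan is to decouple the minimaxmax problem into two independent sub-problems:
(i) for each fixed $D\in\D$, show $\max_{1\le m\le 5}\GG^\infty_{\overline e_m,D}=\GG^\infty_{\overline e_1,D}$;
(ii) show $\min_{D\in\D}\GG^\infty_{\overline e_1,D}=\GG^\infty_{\overline e_1,\text{Strips}}$.
Once (i) and (ii) are in hand, the conjectured optimality of $(\overline e_1,\text{Strips})$ follows by the very definition of \eqref{minimaxmax2}.

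For step (i), I would exploit the explicit structure $\overline e_m(x,y)=\phi_m(y)\sin(mx)$ with $\phi_m$ odd in $y$ and $\nu_m$ the associated torsional eigenvalue, both available from \cite{fergaz}, together with the bounds $m^4<\nu_m<(m^2+\pi^2/(4\ell^2))^2$. In the free plate case $D=\emptyset$, one simply has $u_{\overline e_m,\emptyset}=\overline e_m/\nu_m$, so that
$$\GG^\infty_{\overline e_m,\emptyset}=\frac{2|\phi_m(\ell)|}{\nu_m},$$
and a direct calculation based on the explicit form of $\phi_m$ shows strict decay in $m$. For $D\neq\emptyset$, I would test \eqref{weakD} against $v=u_{\overline e_m,D}$ to obtain $\|u_{\overline e_m,D}\|_{H^2_*}\le \|\overline e_m\|_{H^{-2}_*}$, and then invoke the embedding $H^2_*\hookrightarrow C^0(\overline\Omega)$ to deduce an upper bound of the form $\GG^\infty_{\overline e_m,D}\le C\,\nu_m^{-1/2}$. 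For the chosen parameters $\ell=\pi/150$, $\sigma=0.2$, $d=2$, the numerical values in Table \ref{table2} already exhibit a factor-two gap between the $m=1$ and $m=2$ columns within each row, so a sufficiently careful version of this bound excludes $m\ge 2$ from being a maximizer.

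For step (ii), the Strips reinforcement depends only on $y$, so equation \eqref{loadpb} with $f=\overline e_1=\phi_1(y)\sin(x)$ separates: a solution of the form $u(x,y)=U(y)\sin(x)$ reduces the problem to a piecewise fourth-order ODE in $y$, explicitly solvable by matching conditions across the jumps of $\chi_D$, yielding a closed form for $\GG^\infty_{\overline e_1,\text{Strips}}$. For the remaining three patterns (Triangles, Squares, Hexagons), the operator $(1+d\chi_D)\Delta^2$ no longer commutes with the $x$-Fourier decomposition. The strategy here is to split any $D\in\D$ as $D_{\mathrm{edge}}\cup D_{\mathrm{inner}}$ where $D_{\mathrm{edge}}$ consists of two strips of width $W$ along the free edges (common to all four shapes), and to show that rearranging the mass of $D_{\mathrm{inner}}$ so as to thicken $D_{\mathrm{edge}}$ strictly decreases $\GG^\infty_{\overline e_1,D}$. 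The heuristic justification is that the mixed derivative $(\overline e_1)_{xy}$, which drives the torsional part of the energy in \eqref{scalarpD}, is largest near $y=\pm\ell$, so placing stiffener mass near the free edges maximally raises the Dirichlet--type energy cost of the twisting mode.

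The main obstacle is precisely this last comparison. Two configurations in $\D$ having the same area are not comparable via inclusion, so the trivial monotonicity $\chi_{D_1}\le\chi_{D_2}\Rightarrow \GG^\infty_{f,D_1}\ge\GG^\infty_{f,D_2}$ does not apply between Strips, Squares, Triangles and Hexagons. What is really needed is a weighted rearrangement inequality for the Kirchhoff--Love bilinear form $(u,u)_D$ defined in \eqref{scalarpD}, stating that the map $D\mapsto(u,u)_D$ increases when $\chi_D$ is rearranged toward the free edges, with $u$ being the solution of \eqref{weakD} for $f=\overline e_1$. No such rearrangement result appears to be known for fourth-order operators with the boundary conditions of \eqref{loadpb}, and proving it (even just within $\D$ as in \eqref{FDn2}) seems to be the crucial missing ingredient. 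Until then, the conjecture is supported only by the numerical evidence of Table \ref{table2}.
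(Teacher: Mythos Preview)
The statement you are attempting is a \emph{conjecture} in the paper, not a theorem: the paper offers no proof and explicitly lists it among the open problems in Section~\ref{open}, supporting it only with the numerical values of Table~\ref{table2}. So there is nothing to compare your argument against; the honest question is whether your outline could be turned into an actual proof.

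You yourself correctly identify that it cannot, at least not yet. Your step~(ii) ends with the admission that the key rearrangement-type inequality for the bilinear form $(u,u)_D$ is unavailable, and that without it the comparison between Strips and the other three designs reduces to the same numerical evidence the paper already has. That is an accurate assessment: the four designs in $\D$ are not nested, so no monotonicity argument applies, and no analogue of Talenti- or P\'olya--Szeg\H{o}-type rearrangement is known for the Kirchhoff--Love form with these mixed boundary conditions. Your heuristic about $(\overline e_1)_{xy}$ being largest near the free edges is plausible but does not control the \emph{solution} $u_{\overline e_1,D}$, which is what actually enters $(u,u)_D$; the solution changes with $D$, so a static comparison of where $\overline e_1$ concentrates is not enough.

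Step~(i) also has a genuine gap that you pass over. The energy estimate $\|u_{\overline e_m,D}\|_{H^2_*}\le\|\overline e_m\|_{H^{-2}_*}$ combined with the embedding gives an \emph{upper} bound on $\GG^\infty_{\overline e_m,D}$ involving an unspecified embedding constant; to conclude that $m=1$ is the maximizer you additionally need a \emph{lower} bound on $\GG^\infty_{\overline e_1,D}$ that beats the upper bounds for $m\ge 2$. Pointing to the factor-two gap in Table~\ref{table2} is again numerics, not a proof. Moreover, note that Section~\ref{moregen} works with the energy $\mathbb{E}_2$ (weakening the force), so the relevant equation is \eqref{loadpb}, not \eqref{weakD}; and the eigenfunctions $\overline e_m$ carry an implicit normalization that must be fixed before any quantitative comparison across $m$ makes sense. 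In short, both steps remain open, and your proposal is best read as a roadmap of what would need to be proved rather than a proof.
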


\noindent
{\bf Acknowledgements.} The authors are grateful to J.B. Kennedy for his kind revision of the use of the English Language within the present paper.
The first, second, and fourth authors are partially supported by the Research Project FIR (Futuro in Ricerca) 2013 \emph{Geometrical and
qualitative aspects of PDE's}. The third author is partially supported by the PRIN project {\em Equazioni alle derivate parziali di tipo ellittico e parabolico:
aspetti geometrici, disuguaglianze collegate, e applicazioni}. The four authors are members of the Gruppo Nazionale per l'Analisi Matematica, la Probabilit\`a
e le loro Applicazioni (GNAMPA) of the Istituto Nazionale di Alta Matematica (INdAM).

\end{document}